\documentclass[10pt,reqno,final]{amsart}
\usepackage{epsfig,amssymb,amsmath,version}
\usepackage{amssymb,version,graphicx,fancybox,mathrsfs}
\usepackage[notcite,notref]{showkeys}
\usepackage{url,hyperref}
\usepackage{subfigure}
\usepackage{color}
\usepackage{stmaryrd}
\usepackage{multirow}
\usepackage{booktabs,siunitx}
\usepackage{upgreek}
\usepackage{booktabs}
\usepackage{setspace}
\usepackage{enumitem}
\allowdisplaybreaks[4]

\catcode`\@=11 \theoremstyle{plain}
\@addtoreset{equation}{section}   
\renewcommand{\theequation}{\thesection.\arabic{equation}}
\@addtoreset{figure}{section}
\renewcommand\thefigure{\thesection.\@arabic\c@figure}

\newtheorem{thm}{\bf Theorem}

\@addtoreset{thm}{section}

\newenvironment{theorem}{\begin{thm}} {\end{thm}}
\newtheorem{cor}{\bf Corollary}
\newtheorem{prop}{Proposition}[section]

\newtheorem{lmm}{\bf Lemma}

\newenvironment{lemma}{\begin{lmm}}{\end{lmm}}
\@addtoreset{lmm}{section}
\theoremstyle{remark}
\newtheorem{rem}{\bf Remark}[section]

\def \epsilon {{\varepsilon}}

\definecolor{bgblue}{rgb}{0.04,0.39,0.54}
\definecolor{lired}{rgb}{0.3, 0.0, 0.0}
\definecolor{ligreen}{rgb}{0.0, 0.3, 0.0}
\definecolor{liblue}{rgb}{0.9, 1.0, 1.0}
\definecolor{gray}{rgb}{0.6, 0.6, 0.6}
\definecolor{sky}{rgb}{0.3, 1.0, 1.0}
\definecolor{bunhong}{rgb}{1.0, 0.3, 1.0}
\definecolor{yellow}{rgb}{0.97, 1, 0.0}
\definecolor{liyellow}{rgb}{0.9, 0.8, 0.0}
\definecolor{cengse}{rgb}{0.00,0.40,0.29}

\newcommand{\bs}[1]{\boldsymbol{#1}}

\def  \sig  {\boldsymbol{\upsigma}}
\def  \tt {\boldsymbol{\uptau}}
\def  \om {\boldsymbol{\Upomega}}

\renewcommand \wedge \times

\begin{document}
\bibliographystyle{plain}

{\title[Rotational discrete gradient scheme for Ericksen--Leslie model] {A geometric reformulation and structure-preserving rotational discrete gradient scheme for the full Ericksen--Leslie model}
\author[
	H. Wang,\,    J. Xu\,  $\&$\,  Z. Yang
	]{
		\;\; Hanbin Wang${}^{1}$,   \;\;  Jie Xu${}^{2}$ \;\; and\;\; Zhiguo Yang${}^{*3}$
		}
	\thanks{
	${}^{*}$ Corresponding author.
	 \\
		\indent ${}^{1}$ School of Mathematical Sciences, Shanghai Jiao Tong University, Shanghai 200240, China. Email: hb.wang@sjtu.edu.cn (H. Wang) 
	\\
	\indent${}^{2}$ SKLMS \& NCMIS, Institute of Computational Mathematics and Scientific/Engineering Computing (ICMSEC), Academy of Mathematics and Systems Science (AMSS), Chinese Academy of Sciences, Beijing 100190, China. Email: xujie@lsec.cc.ac.cn (J. Xu) \\
	 \indent ${}^{3}$ School of Mathematical Sciences, Shanghai Jiao Tong University, Shanghai 200240, China. Email: yangzhiguo@sjtu.edu.cn (Z. Yang).
		}
		}

\keywords{Ericksen--Leslie model; geometric reformulation; Oseen--Frank elasticity; full Leslie stress; rotational discrete-gradient method; length preservation; energy stability.}
\subjclass[2020]{65N35, 65N22, 65M70, 65F05, 35J05}

\begin{abstract}
In this paper, we present a structure-preserving rotational discrete gradient  method of the full Ericksen--Leslie model for nematic liquid crystal flows with general anisotropic Oseen--Frank elasticity and full Leslie stress. The main difficulty resides in preserving both the pointwise unit-length constraint on the director field and the energy-dissipation at the discrete level, which involves high nonlinearities in the interplay between the length constraint, anisotropic elastic energy, and hydrodynamic couplings. To address this issue, we first reformulate the Ericksen--Leslie system into an equivalent rotational form in which the director evolution is intrinsically tangent to the unit sphere and the coupling terms in the Leslie and Ericksen stresses are reorganized so that the energy exchange and dissipation structure become explicit. Based on this reformulation, we construct rotational discrete gradient schemes that preserve the director length and satisfy an unconditional discrete energy law, together with a fully discrete scheme based on an exact divergence-free spectral approximation. Numerical experiments verify the accuracy, unit-length preservation and energy-stability of the proposed method, and illustrate dynamical effects induced by anisotropic elastic coefficients and shear flow in the full Ericksen--Leslie model.
\end{abstract}

 \maketitle
\section{Introduction}

In liquid crystal flows, the velocity gradient affects the local anisotropy generated by nonuniform orientational distribution of constituent rigid molecules. 
This in turn induces distinctive flow patterns that characterize the properties of various materials. Such anisotropic couplings play an essential role in a variety of applications, including 
adaptive optics~\cite{Peng2011LCSpatialLightModulator}, biosensing and optofluidics~\cite{sengupta2012opto}, microfluidic control~\cite{Sengupta2013FlowShaping}, and active-nematic systems~\cite{walton2025orienting}.

For the ideal uniaxial nematic phase of liquid crystals, the orientational distribution is axisymmetric and spatially homogeneous. 
It is then often assumed in the dynamic models that the orientational distribution is the same as the above ideal case, while the axis is allowed to vary in space. 
The local anisotropy is now described by a unit vector field, and the corresponding dynamics is known as the Ericksen--Leslie (EL) model~\cite{ericksen1962hydrostatic,leslie1979theory}, which is an energy dissipative system consisting of the director equation and the incompressible Navier--Stokes equations. 
As it turns out, the presence of the local anisotropy leads to a highly nonlinear and strongly coupled structure in the Ericksen--Leslie model. 
Experiments indicate that the splay, twist, and bend elastic coefficients in the Oseen--Frank energy \cite{Frank1958} can be highly disparate (see, for example \cite[Chapter 4]{deGennesProst1993}). 
Moreover, the couplings between the vector and the velocity gradient in the director equation bring about quite a few conjugate terms in the stress in the Navier--Stokes equations. 
The stress also contains several nonlinear, anisotropic dissipation terms.
These terms can be derived from molecular or tensor models~\cite{Han2015from, li2023frame}, so that they can be determined by molecular parameters. 
The connection between the stress and the molecular architecture is a fundamental problem that has been discussed in several works~\cite{forest2004flow,forest2004weak,sircar2009dynamics,toth2002hydrodynamics}.
The validity of these terms significantly depends on the unit vector constraint.
The above features call for effective computational approaches to deal with the four aspects simultaneously: (i) the unit vector constraint; (ii) the nonlinear couplings; (iii) the energy dissipation; and (iv) the incompressibility. 

Numerical methods for simplified EL models and related compound systems have been discussed extensively~\cite{alouges1997new,becker2008finite,bouck2024projection,cabrales2015time,chen2016kinematic,cheng2023energy,du2001fourier,du2025semi,lin2007energy,liu2001liquid,liu2000approximation,nochetto2018ericksen,walker2020finite,xu2024second,Zhao_Yang_Li_and_Wang-2016,zheng2024novel,zou2023extrapolated}. 
Except for a couple of works on gradient flows (by setting the velocity as zero) considering disparate elastic constants~\cite{bouck2024projection,xu2024second}, most works on the EL models take the one-constant simplification with the energy $\mathcal{F}[\bs n]=(\kappa/2)\int_{\Omega} |\nabla \bs n|^2\,dV$, exactly for the harmonic map~(see \cite{alouges1997new,lin1995nonparabolic}). 
Numerical methods aiming at the unit-vector constraint tailored for the one-constant case include projection methods~\cite{bai2025convergence,du2025semi,gui2022convergence}, polar-coordinate reformulations~\cite{bao2021constraint,wang2023error}, and Lagrange-multiplier formulations~\cite{Badia2011a,cao2025length}. 
The unit-vector constraint can be relaxed by the
Ginzburg--Landau penalization~\cite{becker2008finite,cheng2023energy,du2001fourier,liu2000approximation}, in this direction, a variety of energy stable schemes have been developed~\cite{cabrales2015time,cheng2023energy,lin2007energy,Zhao_Yang_Li_and_Wang-2016,zou2023extrapolated}.

The purpose of this work is to develop numerical methods for the full EL model that respect both the unit-vector constraint and the energy-dissipation structure. The original form of the full EL model is derived primarily from symmetry arguments. In this formulation, the unit-vector constraint is imposed as an explicit nonlinear constraint, while the energy law is encoded through delicate cancellations among the director equation, the Ericksen stress, and the Leslie stress. These cancellations are strongly tied to the pointwise constraint and the Parodi relation, and therefore may be easily destroyed by a direct discretization of the individual terms. To the best of our knowledge, there is no existing numerical method for the full EL model with general Oseen--Frank elasticity and full Leslie stress that preserves the unit-length constraint and satisfies an unconditional discrete energy law simultaneously.

The central idea of this paper is to first recast the full EL system into a geometric form in which the constraint, the anisotropic elastic response, and the hydrodynamic energy exchange are represented in a way that is compatible with structure-preserving discretization. Based on this observation, we introduce an equivalent geometric reformulation of the full EL model and construct corresponding rotational discrete gradient (Rdg) schemes. The novelty of the present work lies in the following three closely related aspects:
\begin{enumerate}[label=(\roman*)]
	\item We derive a geometric reformulation of the full EL system in which the director evolution is intrinsically tangent to the unit sphere and the coupling terms in the Leslie and Ericksen stresses are reorganized so that the energy exchange and dissipation structure become explicit. This reformulation provides the foundation for developing structure-preserving numerical discretization.	
 	\item We construct a time-discrete Rdg scheme for the full EL model in which the Oseen--Frank discrete gradient reproduces the increment of the anisotropic elastic energy, while the rotational treatment preserves the unit-length constraint without projection or penalization. As a result, the scheme satisfies an unconditional discrete energy-dissipation law at the time-discrete level.
        \item We further develop a fully discrete Rdg scheme by combining the proposed time discretization with an exact divergence-free spectral approximation for the incompressible velocity field and a quadrature-compatible LGL nodal discretization for the director field. The resulting method preserves the incompressibility constraint, the unit-length constraint at the quadrature points, and a fully discrete energy-dissipation law.         
\end{enumerate}
The rest of the paper is organized as follows. 
In Section~2, we present the original full Ericksen--Leslie model and derive its rotational reformulation. 
In Section~3, we introduce the time-discrete structure-preserving schemes and establish their key discrete properties. 
In Section~4, we describe the fully discrete scheme based on divergence-free spectral method. 
Section~5 is devoted to numerical experiments, including convergence tests, verification of discrete structure-preserving properties, and simulations highlighting anisotropic and shear-flow effects. 
Finally, Section~6 contains concluding remarks.

\subsection{Notations}
Denote vectors and matrices by bold italic and non-italic letters, e.g. $\bs{n}=(n_i)$ and $\mathbf{p}=(p_{i j})$, respectively. For conciseness of notation, we adopt the suffix notation on repeated indices such that 
\begin{equation}\label{eq: dotp}
\bs n \cdot \bs m =n_im_i:=\sum_{i=1}^3 n_im_i,\quad \mathbf{p} : \mathbf{q}=p_{i j} q_{i j}:=\sum_{i, j=1}^3 p_{i j} q_{i j},\;\;\;\; \text{for}\;\; i,j=1,2,3.
\end{equation}
The left and right dot product of a matrix and a vector are defined by $(\mathbf{p} \cdot \bs n)_i=p_{ij} n_j$ and $(\bs n \cdot \mathbf{p})_j=n_i p_{ij}$, respectively. We also adopt the notations of Kronecker product $(\bs m \otimes \bs n)_{ij}:=m_i n_j$, the gradient of vector $(\nabla \bs v)_{ij}:=\partial_i v_j$, the divergence of a matrix $(\nabla \cdot \mathbf{p})_{j}=\partial_i p_{i j}$, respectively.  For a matrix field $\mathbf a=(a_{ij})$, its gradient $\nabla \mathbf a$ is the third-order tensor with components $(\nabla \mathbf a)_{ijk}=\partial_i a_{jk}$, and for a third-order tensor $\mathfrak a=(a_{ijk})$ and a matrix $\mathbf b=(b_{jk})$, the contraction $\mathfrak a:\mathbf b$ is the vector with components $(\mathfrak a:\mathbf b)_i=a_{ijk}b_{jk}$ and the contraction $\mathbf b:\mathfrak a$ is the vector with components $(\mathbf b:\mathfrak a)_k=b_{ij}a_{ijk}$. Let us also introduce the notations of Kronecker delta $\delta_{ij}$ and the alternating tensor $\epsilon_{ijk}$ defined by 
\begin{equation}\label{eq: altenosr}
\delta_{ij}=
\begin{cases}
1, & i=j,\\
0, & i\neq j,
\end{cases},\quad 
\epsilon_{ijk}=
\begin{cases}
1, & (i,j,k)=(1,2,3), \,(2,3,1),\,(3,1,2),\\
-1,& (i,j,k)=(2,1,3),\,(3,2,1),\,(1,3,2),\\
0, & \text{if any of} \,\,i,j,k\,\, \text{are equal}.
\end{cases}
\end{equation}
These two symbols are related by the following identity
\begin{equation}\label{eq: dellevi}
\epsilon_{ijk} \epsilon_{klm}=\delta_{il}\delta_{jm}-\delta_{im}\delta_{jl}, 
\end{equation}
which will be frequently used for manipulating expressions involving the cross product and the curl operator. In particular, we shall repeatedly use the following standard identities for the cross product: for any 
$\bs a, \bs b, \bs c \in \mathbb{R}^3$,
\begin{subequations}\label{id: crossid}
	\begin{align}
&(\bs a \times \bs b)\times \bs a = |\bs a|^2 \bs b - (\bs a\cdot \bs b)\bs a ,\label{id: crossid1}\\
&(\bs a \times \bs b)\cdot \bs c = (\bs b \times \bs c)\cdot \bs a = (\bs c \times \bs a)\cdot \bs b 
,\label{id: crossid2}\\
&\bs a \times \bs a =\bs 0.\label{id: crossid3}
\end{align}
\end{subequations}

For a specific domain $\Omega$, we use $(\cdot,\cdot)_\Omega$ and $\|\cdot\|_{\Omega}$ to denote the inner product and the $L^2$-norm of $\Omega$, and use $\langle\cdot,\cdot\rangle_{\Omega}$ to denote the inner product on the boundary $\partial \Omega$ as usual. In this paper we may drop the subscript $\Omega$ for conventions. $\bs H^1(\Omega;\mathbb{R}^3)$ and $\bs H_0^1(\Omega;\mathbb{R}^3)$ are denoted as the usual Sobolev spaces, and we omit the reference to $\mathbb{R}^3$ if no ambiguity occurs.

\section{The full Ericksen--Leslie model and its reformulation}

\subsection{The full Ericksen--Leslie model}

We consider the full EL system for nematic liquid crystal flows; see, for example, \cite{deGennesProst1993}. The unknowns are the fluid velocity $\bs v$, the pressure $p$, and the director field $\bs n$ representing the averaged molecular orientation. The dimensionless EL system is given by
\begin{subequations}\label{eq: elmodel}
\begin{align}
& \bs v_t+\bs v\cdot \nabla \bs v
= -\nabla p+\frac{\gamma}{\rm Re}\Delta \bs v+\frac{1-\gamma}{\rm Re}\nabla\cdot \sig,
\label{eq: eleq}\\
& \nabla\cdot \bs v=0,
\label{eq: incomp}\\
& \bs n\times \big(\bs \mu-\gamma_1 \bs N-\gamma_2 \tt\cdot \bs n\big)=\bs 0,
\label{eq: offlow}\\
& |\bs n|=1,
\label{eq: unitv}
\end{align}
\end{subequations}
together with the initial and boundary conditions
\begin{subequations}\label{eq: boundary}
\begin{align}
& \bs v(0,\bs x)=\bs v_0,\qquad \bs n(0,\bs x)=\bs n_0,\\
& \bs v(t,\bs x)\big|_{\partial\Omega}=\bs g_1(\bs x),\qquad
  \bs n(t,\bs x)\big|_{\partial\Omega}=\bs g_2(\bs x).
\end{align}
\end{subequations}
Here, ${\rm Re}$ is the Reynolds number, and $\gamma\in(0,1)$ is a given dimensionless parameter. The elastic contribution is described by the Oseen--Frank energy $\mathbf{F}[\bs n]=\int_\Omega f(\bs n,\nabla \bs n)\,dV$
with energy density
\begin{equation}
f(\bs n,\nabla \bs n)
=
\frac12\Big\{
\kappa_1(\nabla\cdot \bs n)^2
+\kappa_2(\bs n\cdot\nabla\times \bs n)^2
+\kappa_3|\bs n\times (\nabla\times \bs n)|^2
+(\kappa_2+\kappa_4)\rm{tr}\big((\nabla \bs n)^2-(\nabla \cdot \bs n)^2\big)\Big\}.
\label{eq:OF-density-intro}
\end{equation}
These first three terms correspond to the excess elastic energy associated with splay, twist, and bend deformations. The contribution of the last term to the total energy is determined only by the boundary values of $\bs n$ and its tangential derivatives, and hence remains constant under the Dirichlet boundary condition~\eqref{eq: boundary}. For this reason, we omit this term in the rest of the manuscript. The molecular field $\bs \mu$ is defined by
\begin{equation}\label{eq: mu}
\bs \mu
=
-\frac{\delta \mathcal{F}}{\delta \bs n}
=
\nabla \cdot \frac{\partial f}{\partial(\nabla \bs n)}
-\frac{\partial f}{\partial \bs n}.
\end{equation}
The tensors $\tt$, $\bs \Omega$, and the corotational derivative $\bs N$ are given by
\begin{equation}\label{eq: tN}
\tt=\frac{\nabla \bs v+(\nabla \bs v)^{\intercal}}{2},
\qquad
\bs \Omega=\frac{\nabla \bs v-(\nabla \bs v)^{\intercal}}{2},
\qquad
\bs N=\bs n_t+\bs v\cdot \nabla \bs n+\bs \Omega\cdot \bs n.
\end{equation}
The total stress tensor $\sig$ is decomposed as $\sig=\sig^{\rm E}+\sig^{\rm L}$,
where $\sig^{\rm E}$ is the Ericksen stress, namely the elastic stress tensor,
\begin{equation}\label{eq: elastress}
\sig^{\rm E}
=
-\frac{\partial f}{\partial(\nabla \bs n)}\cdot (\nabla \bs n)^{\intercal},
\end{equation}
while $\sig^{\rm L}$ is the Leslie stress, namely the viscous stress tensor,
\begin{equation}\label{eq: Lstress}
\sig^{\rm L}
=
\alpha_1(\bs n\otimes \bs n:\tt)\,\bs n\otimes \bs n
+\alpha_2 \bs n\otimes \bs N
+\alpha_3 \bs N\otimes \bs n
+\alpha_4 \tt
+\alpha_5 \bs n\otimes (\tt\cdot \bs n)
+\alpha_6 (\tt\cdot \bs n)\otimes \bs n.
\end{equation}
Here $\alpha_1,\ldots,\alpha_6$ are the Leslie coefficients. These coefficients, together with $\gamma_1$ and $\gamma_2$, satisfy
\begin{align}
& \alpha_2+\alpha_3=\alpha_6-\alpha_5,
\label{eq:Parodi}\\
& \gamma_1=\alpha_3-\alpha_2,\quad \gamma_2=\alpha_6-\alpha_5,
\label{eq: coeff2}
\end{align}
where \eqref{eq:Parodi} is the Parodi relation derived from Onsager's reciprocal principle.

The EL system \eqref{eq: elmodel} satisfies the following energy dissipation law given in Theorem~\ref{thm: enebalaw}.
\begin{theorem}\label{thm: enebalaw}
Assume either homogeneous boundary conditions $\bs v(t,\bs x)=\bs 0$ and $\bs n(t,\bs x)=\bs g_2(\bs x)$ for all $(t,\bs x)\in (0,T)\times \partial\Omega$, or periodic boundary conditions. Then the following energy dissipation law holds: 
\begin{equation}\label{eq: endisslaw}
\begin{aligned}
\frac{d}{dt}\Big\{
\frac{{\rm Re}}{2(1-\gamma)}\int_\Omega |\bs v|^2\,dV+&\mathcal{F}[\bs n]
\Big\}
=
-\int_\Omega \Big\{
\frac{\gamma}{1-\gamma}|\nabla \bs v|^2
+\Big(\alpha_1+\frac{\gamma_2^2}{\gamma_1}\Big)(\tt:\bs n\otimes \bs n)^2
\\&\qquad \qquad+\alpha_4(\tt:\tt) 
+\Big(\alpha_5+\alpha_6-\frac{\gamma_2^2}{\gamma_1}\Big)|\tt\cdot \bs n|^2
+\frac{1}{\gamma_1}|\bs n\times \bs \mu|^2
\Big\}\,{\mathrm d}V.
\end{aligned}
\end{equation}
\end{theorem}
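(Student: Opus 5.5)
The plan is to test the momentum equation \eqref{eq: eleq} against $\frac{\rm Re}{1-\gamma}\bs v$ and to pair the director equation \eqref{eq: offlow} with $\bs n_t$. The coupling terms should then cancel, leaving only the dissipation on the right-hand side of \eqref{eq: endisslaw}.

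\emph{Kinetic part.} Testing \eqref{eq: eleq} with $\frac{\rm Re}{1-\gamma}\bs v$ and using $\nabla\cdot\bs v=0$ with the boundary conditions, the convection and pressure terms vanish. The Laplacian gives $-\frac{\gamma}{1-\gamma}\|\nabla\bs v\|^2$. The stress gives $-\int\sig:\nabla\bs v$, and $\nabla\bs v=\tt+\bs\Omega$ splits this term in two.

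\emph{Elastic part.} Since $\mathcal F$ depends on time only through $\bs n$, we have $\frac{d}{dt}\mathcal F=-\int\bs\mu\cdot\bs n_t$. The boundary term drops because $\bs n_t=0$ on $\partial\Omega$ (or by periodicity). We write $\bs n_t=\bs N-\bs v\cdot\nabla\bs n-\bs\Omega\cdot\bs n$.

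The transport piece $\int\bs\mu\cdot(\bs v\cdot\nabla\bs n)$ must cancel against $\int\sig^{\rm E}:\nabla\bs v$ up to a term absorbed by the pressure or the Lagrange multiplier. This is the standard Ericksen identity
\[
\nabla\cdot\sig^{\rm E} = -\nabla f + (\nabla\bs n)\cdot\Big(\nabla\cdot\frac{\partial f}{\partial\nabla\bs n}\Big)\ \text{(up to index conventions)},
\]
combined with $\partial_i f = \frac{\partial f}{\partial\bs n}\cdot\partial_i\bs n+\frac{\partial f}{\partial\nabla\bs n}\vdots\nabla\partial_i\bs n$. After integration by parts, this gives $\int\sig^{\rm E}:\nabla\bs v = -\int(\bs v\cdot\nabla\bs n)\cdot\bs\mu$, using $\int\bs v\cdot\nabla f=0$.

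There is one subtlety. $\bs\mu$ is defined without the constraint, so only the component of $\bs\mu$ orthogonal to $\bs n$ is physical. Since $\partial_i\bs n\perp\bs n$ and $\bs n_t\perp\bs n$, the parallel component never contributes. I will also check that the frame-indifference of $f$ makes the antisymmetric part of $\sig^{\rm E}$ pair consistently with $\bs\Omega\cdot\bs n$ against $\bs\mu$. Concretely, the rotational invariance identity should give
\[
\sig^{\rm E}:\bs\Omega = \text{(term)}\ (\bs\Omega\cdot\bs n)\cdot\bs\mu .
\]
Alternatively, I would simply keep $\int\bs\mu\cdot(\bs\Omega\cdot\bs n)$ and combine it with the Leslie stress below.

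\emph{Leslie part and director equation.} By \eqref{eq: offlow} and $|\bs n|=1$, the tangential projection gives $\bs\mu_\perp=\gamma_1\bs N+\gamma_2(\tt\cdot\bs n)_\perp$, where $\perp$ denotes $(\mathbf I-\bs n\otimes\bs n)$. Note that $\bs N\perp\bs n$ automatically. We then compute $\sig^{\rm L}:(\tt+\bs\Omega)$ term by term, using symmetry of $\tt$ and antisymmetry of $\bs\Omega$. The relevant pairings are:
\begin{itemize}
\item $(\bs n\otimes\bs N):\bs\Omega=-(\bs N\otimes\bs n):\bs\Omega=\bs N\cdot(\bs\Omega\cdot\bs n)$ up to sign;
\item $(\bs n\otimes(\tt\cdot\bs n)):\bs\Omega$ gives $\pm(\tt\cdot\bs n)\cdot(\bs\Omega\cdot\bs n)$.
\end{itemize}
Collecting terms yields
\[
-\int\sig^{\rm L}:\nabla\bs v = -\int\Big[\alpha_1(\tt:\bs n\otimes\bs n)^2+\alpha_4|\tt|^2+(\alpha_5+\alpha_6)|\tt\cdot\bs n|^2+(\alpha_2+\alpha_3)\bs N\cdot\tt\cdot\bs n+\gamma_1\,\bs N\cdot\bs\Omega\cdot\bs n+\gamma_2(\tt\cdot\bs n)\cdot\bs\Omega\cdot\bs n\Big]
\]
(with signs to be fixed).

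The energy contribution $-\int\bs\mu\cdot(\bs N-\bs\Omega\cdot\bs n)$ becomes, after substituting $\bs\mu_\perp$,
\[
-\int\Big[\gamma_1|\bs N|^2+\gamma_2\,\bs N\cdot\tt\cdot\bs n-\gamma_1\bs N\cdot\bs\Omega\cdot\bs n-\gamma_2(\tt\cdot\bs n)\cdot\bs\Omega\cdot\bs n\Big].
\]
The $\bs\Omega$ cross terms cancel. By the Parodi relation \eqref{eq:Parodi}, $\alpha_2+\alpha_3=\gamma_2$, so the remaining cross term is $2\gamma_2\bs N\cdot\tt\cdot\bs n$.

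We now eliminate $\bs N$ via $\gamma_1\bs N=\bs\mu_\perp-\gamma_2(\tt\cdot\bs n)_\perp$. Completing the square gives
\[
\gamma_1|\bs N|^2+2\gamma_2\bs N\cdot\tt\cdot\bs n=\frac1{\gamma_1}|\bs\mu_\perp|^2-\frac{\gamma_2^2}{\gamma_1}|(\tt\cdot\bs n)_\perp|^2 .
\]
Finally, we use $|\bs\mu_\perp|=|\bs n\times\bs\mu|$ and
\[
|(\tt\cdot\bs n)_\perp|^2=|\tt\cdot\bs n|^2-(\tt:\bs n\otimes\bs n)^2 .
\]
These produce exactly the coefficients $\alpha_1+\gamma_2^2/\gamma_1$ and $\alpha_5+\alpha_6-\gamma_2^2/\gamma_1$ in \eqref{eq: endisslaw}.

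\emph{Main obstacle.} The main difficulty is the Ericksen-stress step. For the anisotropic Oseen--Frank density, one must verify that $\int\sig^{\rm E}:\nabla\bs v$ exactly equals the transport term, plus the antisymmetric $\bs\Omega$-pairing consistent with the above bookkeeping. This requires the rotational invariance identity for $f$:
\[
\frac{\partial f}{\partial\bs n}\otimes\bs n+\frac{\partial f}{\partial\nabla\bs n}^{\intercal}\cdot\nabla\bs n+(\nabla\bs n)^{\intercal}\cdot\frac{\partial f}{\partial\nabla\bs n}\ \text{is symmetric}.
\]
I would verify this from $f(\mathbf R\bs n,\mathbf R\nabla\bs n\mathbf R^{\intercal})=f$ by differentiating at $\mathbf R=\mathbf I$. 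Care with the sign conventions for $\bs\Omega\cdot\bs n$ in $\bs N$ will be needed there.
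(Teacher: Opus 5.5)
Your argument is correct in substance, but it is organized differently from the route the paper has in mind. The paper does not write out a proof of this law. The argument implicit in its reformulation (Lemmas~\ref{lm: nreform}--\ref{lm: sigEreform}, carried out in discrete form in the proof of Theorem~\ref{thm: main-scheme}) eliminates $\bs N$ from the constitutive law \emph{before} any energy computation. Substituting \eqref{eq: Neli} into $\sig^{\rm L}$ and using the Parodi relation gives $\widehat{\sig}^{\rm L}$. Its symmetric coefficients $\alpha_1+\gamma_2^2/\gamma_1$ and $\tfrac12(\alpha_5+\alpha_6-\gamma_2^2/\gamma_1)$ already carry the corrections, and its $\bs\mu$-terms, paired with $\nabla\bs v$, give exactly $\int(\bs\Omega\cdot\bs n+\tfrac{\gamma_2}{\gamma_1}\tt\cdot\bs n)\cdot\big((\bs n\times\bs\mu)\times\bs n\big)$. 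Testing the rotational equation \eqref{eq: offlowrot} with $\bs\mu$ then yields $\tfrac{1}{\gamma_1}\|\bs n\times\bs\mu\|^2$ directly. The remaining coupling terms cancel against the Ericksen term of Lemma~\ref{lm: sigEreform} and the $\bs\mu$-part of $\widehat{\sig}^{\rm L}$, so no square has to be completed.

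You instead keep $\bs N$, expand $\sig^{\rm L}:\nabla\bs v$, and complete the square using $\bs\mu_\perp=\gamma_1\bs N+\gamma_2(\tt\cdot\bs n)_\perp$. Your expansion, the cancellation of the $\bs\Omega$-terms, and the square completion are all correct. That completion is precisely the substitution of Lemma~\ref{eq: Nnew}, performed on the energy integrand rather than on the stress. Your route is the shorter classical derivation on the original system. The paper's route front-loads every use of $|\bs n|=1$, $\bs N\perp\bs n$ and the director equation into the reformulation, so the energy identity afterwards needs only cross-product identities. That is what lets it survive the midpoint discretization, where $|\bs n^{m+\frac12}|\neq 1$ and there is no $\bs N$ to complete a square in.

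Two corrections to your Ericksen step. First, your displayed identity has a sign error. The chain rule gives $\nabla\cdot\sig^{\rm E}=-\nabla f-\nabla\bs n\cdot\bs\mu$ (this is \eqref{eq: equalEricksenmu}), hence $\int\sig^{\rm E}:\nabla\bs v=+\int(\bs v\cdot\nabla\bs n)\cdot\bs\mu$. This is what cancels the $+\int\bs\mu\cdot(\bs v\cdot\nabla\bs n)$ produced by $-\int\bs\mu\cdot\bs n_t$; with your minus sign the two transport terms would add.

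Second, the step you call the main obstacle is not one. The identity is pure chain rule, holds for every $f(\bs n,\nabla\bs n)$, and after integration by parts already accounts for the full $\nabla\bs v$, antisymmetric part included. No rotational invariance of $f$ is needed. You must not additionally pair $\sig^{\rm E}:\bs\Omega$ with $(\bs\Omega\cdot\bs n)\cdot\bs\mu$, since that would count it twice. The term $\int\bs\mu\cdot(\bs\Omega\cdot\bs n)$ belongs to the Leslie step, exactly as in your ``alternative.''
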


\begin{rem}
Throughout the paper, whenever we refer to the energy dissipation law, we assume
\begin{equation}\label{eq: paracons}
\gamma_1>0,\qquad
\alpha_4\ge 0,\qquad
\alpha_1+\frac{\gamma_2^2}{\gamma_1}\ge 0,\qquad
\alpha_5+\alpha_6-\frac{\gamma_2^2}{\gamma_1}\ge 0,
\end{equation}
so that all dissipative terms on the right-hand side of \eqref{eq: endisslaw} are nonnegative.
\end{rem}

The derivation of the energy law \eqref{eq: endisslaw} relies on cancellations and dissipative relations among the director equation, the Ericksen stress, and the Leslie stress, where the unit-length constraint $|\bs n|=1$ and the Parodi relation \eqref{eq:Parodi} play essential roles. One target in the reformulation is to make the cancellation and dissipation relations independent of $|\bs{n}|=1$. 

\subsection{Reformulation of the Ericksen--Leslie model}

For the full EL model, two intrinsic properties are particularly important for numerical simulation: (i) the pointwise unit-length constraint $|\bs n|=1$ and (ii) the energy dissipation law of the coupled system. In the original formulation~\eqref{eq: elmodel}, however, the unit-length constraint appears as an explicit nonlinear constraint, while the dissipative structure is hidden among the director equation, the Leslie stress, and the Ericksen stress. As a consequence, a direct time-centered discretization of the original EL model does not naturally lead to a numerical scheme that preserves unit-length and energy dissipation law.

The key to the development of the proposed unconditionally energy stable and length-preserving schemes is to reformulate the full EL model into an equivalent rotational form that is better suited for structure-preserving discretization. The following lemmas provide the main ingredients of this reformulation. 

We begin with the reformulation of equation for the director field. The next lemma shows that, once the initial condition satisfies $|\bs n(\bs x,0)|=1$, the original constrained director equation is equivalent to a rotational evolution equation that preserves the unit-length automatically.

\begin{lemma}\label{lm: nreform}
Given the initial condition $|\bs n(\bs x,0)|=1$, equation \eqref{eq: offlow} together with the unit-vector constraint \eqref{eq: unitv} is equivalent to the following rotational form:
\begin{equation}\label{eq: offlowrot}
\bs n_t
=
\frac{1}{\gamma_1}
\Big(
\bs n\times
\big(
\bs \mu-\gamma_1(\bs v\cdot \nabla \bs n+\om\cdot \bs n)-\gamma_2\tt\cdot \bs n
\big)
\Big)\times \bs n.
\end{equation}
\end{lemma}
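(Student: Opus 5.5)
Write $\bs w := \bs \mu-\gamma_1(\bs v\cdot \nabla \bs n+\om\cdot \bs n)-\gamma_2\tt\cdot \bs n$. By the definition of $\bs N$ in \eqref{eq: tN}, equation \eqref{eq: offlow} reads $\bs n\times(\gamma_1\bs n_t - \bs w)=\bs 0$. The proof rests on two facts. First, the vector triple product identity \eqref{id: crossid1}: for any $\bs a$,
\begin{equation*}
(\bs n\times \bs a)\times \bs n = |\bs n|^2\bs a-(\bs n\cdot\bs a)\bs n .
\end{equation*}
Second, $\bs n\cdot \bs n_t=\tfrac12\partial_t|\bs n|^2$. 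I will prove the two directions separately.

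\emph{Forward direction} (\eqref{eq: offlow}--\eqref{eq: unitv} $\Rightarrow$ \eqref{eq: offlowrot}). Since $|\bs n|=1$ for all $t$, we get $\bs n\cdot\bs n_t=0$. Take the cross product of \eqref{eq: offlow} with $\bs n$ on the right and use the identity with $\bs a=\gamma_1\bs n_t-\bs w$:
\begin{equation*}
\bs 0=(\bs n\times(\gamma_1\bs n_t-\bs w))\times\bs n=\gamma_1\bs n_t-(\bs n\times\bs w)\times\bs n ,
\end{equation*}
where the normal component $\gamma_1(\bs n\cdot\bs n_t)\bs n$ has dropped out. Dividing by $\gamma_1>0$ gives \eqref{eq: offlowrot}.

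\emph{Reverse direction} (\eqref{eq: offlowrot} with $|\bs n(\cdot,0)|=1$ $\Rightarrow$ \eqref{eq: offlow}--\eqref{eq: unitv}). By \eqref{id: crossid2}, the right-hand side of \eqref{eq: offlowrot} is orthogonal to $\bs n$. Hence $\partial_t|\bs n|^2=2\bs n\cdot\bs n_t=0$ pointwise, and $|\bs n|=1$ for all $t$. It remains to recover \eqref{eq: offlow}. Take $\bs n\times$ of both sides of \eqref{eq: offlowrot}. Using the identity together with $|\bs n|=1$, we have $(\bs n\times\bs w)\times\bs n=\bs w-(\bs n\cdot\bs w)\bs n$. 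Then \eqref{id: crossid3} gives
\begin{equation*}
\gamma_1\,\bs n\times\bs n_t=\bs n\times\bs w ,
\end{equation*}
which is exactly $\bs n\times(\gamma_1\bs n_t-\bs w)=\bs 0$, i.e.\ \eqref{eq: offlow}.

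\emph{Main obstacle.} The delicate point is the reverse direction. The length preservation must be established \emph{before} the identity $|\bs n|^2\bs a$ is simplified using $|\bs n|=1$; otherwise the argument is circular. The order must therefore be: orthogonality first, then conservation of $|\bs n|$, then recovery of \eqref{eq: offlow}. A further point is that $\bs n\cdot\bs n_t=\tfrac12\partial_t|\bs n|^2$ holds pointwise along trajectories, which requires a sufficiently smooth (classical) solution.
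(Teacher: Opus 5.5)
Your proof is correct and is essentially the paper's argument. For \eqref{eq: offlowrot} $\Rightarrow$ \eqref{eq: offlow}--\eqref{eq: unitv}, you first use orthogonality to $\bs n$ to conserve $|\bs n|$, then take $\bs n\times$ to recover \eqref{eq: offlow}; for the converse, you take the right cross product with $\bs n$ and use $\bs n\cdot\bs n_t=0$. These are the same two steps the paper uses, presented in the opposite order.
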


\begin{proof}
We first show that \eqref{eq: offlowrot} implies \eqref{eq: offlow} and \eqref{eq: unitv}. Taking the dot product of \eqref{eq: offlowrot} with $\bs n$ and using 
$
\big((\bs n\times \bs a)\times \bs n\big)\cdot \bs n=0
$
for any vector $\bs a$, we obtain
\begin{equation}
	\frac12 (|\bs n|^2)_t=\bs n_t \cdot \bs n=0.
\end{equation}
Since $|\bs n(\bs x,0)|=1$, it follows that $|\bs n(\bs x,t)|=1$ for all $t$, which proves \eqref{eq: unitv}. Moreover, taking the cross product of \eqref{eq: offlowrot} with $\bs n$ from the left and using $|\bs n|=1$, identities \eqref{id: crossid1} and \eqref{id: crossid3}, we obtain
\begin{equation}\label{eq:ng1}
\bs n\times(\gamma_1\bs n_t)
=
\bs n\times
\big(
\bs \mu-\gamma_1(\bs v\cdot \nabla \bs n+\om\cdot \bs n)-\gamma_2\tt\cdot \bs n
\big).
\end{equation}
By the definition of $\bs N$ in \eqref{eq: tN}, this is exactly \eqref{eq: offlow}.

Conversely, assume that \eqref{eq: offlow} and \eqref{eq: unitv} hold. Taking the right cross product of \eqref{eq: offlow} with $\bs n$ and recalling the definition of $\bs N$, we get
\begin{equation}\label{eq: ng2}
\Big(
\bs n\times
\big(
\bs \mu-\gamma_1(\bs v\cdot \nabla \bs n+\om\cdot \bs n)-\gamma_2\tt\cdot \bs n
\big)
\Big)\times \bs n
=
(\gamma_1\bs n\times\bs n_t)\times\bs n .
\end{equation}
Using $|\bs n|=1$ and $\bs n_t\cdot\bs n=(|\bs n|^2/2)_t=0$, the right-hand side becomes $\gamma_1\bs n_t$. Hence \eqref{eq: ng2} reduces to \eqref{eq: offlowrot}. This completes the proof.
\end{proof}

Lemma~\ref{lm: nreform} shows that the unit-length constraint can be recovered directly from the rotational structure of the director equation. This is already useful for the design of numerical schemes, since one may aim at a discretization that preserves $|\bs n|=1$ through the form of the equation itself.  To recover the dissipative structure at the discrete level, however, it is also necessary to reformulate the stress tensors. We first consider the Leslie stress.

\begin{lemma}\label{eq: Nnew}
Assume $|\bs n(\bs x,t)|=1$. Then $\bs N$ given by \eqref{eq: tN} and ${\sig}^{\rm L}$ given by \eqref{eq: Lstress} can be rewritten into rotational formulations as
\begin{equation}\label{eq: NtN}
\bs N=\widehat{\bs N}
:=
\bs n_t+\big(\bs n\times (\bs v\cdot \nabla \bs n)\big)\times \bs n+\big(\bs n\times (\om\cdot \bs n)\big)\times \bs n,
\end{equation}
and
\begin{equation}\label{eq: sigLnew}
\begin{aligned}
{\sig}^{\rm L}=\widehat{\sig}^{\rm L}
:=
&\;
\Big(\alpha_1+\frac{\gamma_2^2}{\gamma_1}\Big)
(\bs n\otimes \bs n:\tt)\,\bs n\otimes \bs n
+\frac{\alpha_2}{\gamma_1}\bs n\otimes\big((\bs n\times \bs \mu)\times \bs n\big)\\
&+\frac{\alpha_3}{\gamma_1}\big((\bs n\times \bs \mu)\times \bs n\big)\otimes \bs n
+\alpha_4\tt+\Big(\frac{\alpha_5+\alpha_6}{2}-\frac{\gamma_2^2}{2\gamma_1}\Big)
\Big(\bs n\otimes (\tt\cdot \bs n)+(\tt\cdot \bs n)\otimes \bs n\Big).
\end{aligned}
\end{equation}
\end{lemma}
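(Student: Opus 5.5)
The plan is to rely on one elementary identity. For $|\bs n|=1$ and any vector $\bs a$, identity \eqref{id: crossid1} gives
\begin{equation*}
(\bs n\times \bs a)\times \bs n=\bs a-(\bs n\cdot\bs a)\,\bs n ,
\end{equation*}
which is the projection of $\bs a$ onto the tangent plane at $\bs n$. Both claims of the lemma then reduce to showing that certain normal components vanish, or to substituting the director equation \eqref{eq: offlow}. The second claim \eqref{eq: sigLnew} does not follow from $|\bs n|=1$ alone: I will use \eqref{eq: offlow} together with the Parodi relation \eqref{eq:Parodi} and \eqref{eq: coeff2}, so I read the lemma as holding along solutions of \eqref{eq: elmodel}.

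For \eqref{eq: NtN}, I apply the projection identity to $\bs a=\bs v\cdot\nabla\bs n$ and to $\bs a=\om\cdot\bs n$. Differentiating $|\bs n|^2=1$ in space gives $\bs n\cdot(v_i\partial_i\bs n)=\tfrac12 v_i\partial_i|\bs n|^2=0$. Skew-symmetry of $\om$ gives $\bs n\cdot(\om\cdot\bs n)=0$. Hence both projections act as the identity on these terms, and $\widehat{\bs N}=\bs N$. As a byproduct, using also $\bs n\cdot\bs n_t=0$, I get $\bs n\cdot\bs N=0$, so $\bs N$ itself is tangent.

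For the stress, I take the right cross product of \eqref{eq: offlow} with $\bs n$ and apply the projection identity to each term. Since $\bs N$ is tangent, this yields
\begin{equation*}
\bs N=\frac1{\gamma_1}(\bs n\times\bs\mu)\times\bs n-\frac{\gamma_2}{\gamma_1}\Big(\tt\cdot\bs n-(\bs n\otimes\bs n:\tt)\,\bs n\Big).
\end{equation*}
I then substitute this into $\alpha_2\bs n\otimes\bs N+\alpha_3\bs N\otimes\bs n$ and collect terms.
\begin{enumerate}[label=(\alph*)]
\item The $\bs\mu$-terms reproduce the two middle terms of \eqref{eq: sigLnew} exactly.
\item The $(\bs n\otimes\bs n:\tt)\,\bs n\otimes\bs n$ terms carry coefficient $\alpha_1+\gamma_2(\alpha_2+\alpha_3)/\gamma_1$. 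By \eqref{eq:Parodi} this equals $\alpha_1+\gamma_2^2/\gamma_1$.
\item The remaining terms give $(\alpha_5-\gamma_2\alpha_2/\gamma_1)\,\bs n\otimes(\tt\cdot\bs n)+(\alpha_6-\gamma_2\alpha_3/\gamma_1)\,(\tt\cdot\bs n)\otimes\bs n$.
\end{enumerate}

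The main obstacle is the last step: showing that both coefficients in (c) equal $\tfrac12(\alpha_5+\alpha_6)-\gamma_2^2/(2\gamma_1)$, which is the symmetrization. For the first coefficient, the difference is
\begin{equation*}
\frac{\alpha_5-\alpha_6}{2}+\frac{\gamma_2(\gamma_2-2\alpha_2)}{2\gamma_1}.
\end{equation*}
Using $\gamma_2=\alpha_6-\alpha_5=\alpha_2+\alpha_3$, we have $\gamma_2-2\alpha_2=\alpha_3-\alpha_2=\gamma_1$. The difference therefore equals $-\gamma_2/2+\gamma_2/2=0$. The second coefficient is handled symmetrically, now with $2\alpha_3-\gamma_2=\gamma_1$, which completes the identification $\sig^{\rm L}=\widehat{\sig}^{\rm L}$.
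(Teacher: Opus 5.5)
Your proof is correct and follows essentially the same route as the paper. Tangency of $\bs v\cdot\nabla\bs n$ and $\om\cdot\bs n$ gives $\widehat{\bs N}=\bs N$; eliminating $\bs N$ through the director equation and then applying the Parodi relation yields \eqref{eq: sigLnew}. The paper substitutes the rotational form \eqref{eq: offlowrot}, whereas you take the right cross product of \eqref{eq: offlow} with $\bs n$, which is equivalent. You are also right that the stress identity needs the director equation and not merely $|\bs n|=1$; the paper's proof relies on it implicitly as well.
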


\begin{proof}
Since $\om$ is antisymmetric, we have $\bs n\cdot \om\cdot \bs n=0$. Using \eqref{id: crossid1} and $|\bs n|=1$, we obtain
\begin{equation}\label{eq: omegan}
\big(\bs n\times(\om\cdot \bs n)\big)\times \bs n
=|\bs n|^2\om \cdot \bs n-(\bs n \cdot \om \cdot \bs n)\bs n=
\om\cdot \bs n .
\end{equation}
Similarly, since
$
(\bs v\cdot \nabla \bs n)\cdot \bs n
=
\bs v\cdot \nabla (|\bs n|^2/2)
=
0,
$
we obtain
\begin{equation}\label{eq: nvn}
\big(\bs n\times(\bs v\cdot \nabla \bs n)\big)\times \bs n
=
\bs v\cdot \nabla \bs n .
\end{equation}
Combining \eqref{eq: omegan} and \eqref{eq: nvn} yields \eqref{eq: NtN}.

It remains to derive \eqref{eq: sigLnew}. By using \eqref{id: crossid1} and the unit-length condition, we obtain
\begin{equation}\label{eq: tautensor}
	\begin{aligned}
		\bs n\otimes\big(\bs n\times(\tt \cdot \bs n)\big)\times \bs n
		&=
		\bs n\otimes(\tt\cdot \bs n)-(\bs n\otimes \bs n:\tt)\,\bs n\otimes \bs n,
		\\
		\big(\bs n\times(\tt\cdot \bs n)\big)\times \bs n\otimes \bs n
		&=
		(\tt\cdot \bs n)\otimes \bs n
		-(\bs n\otimes \bs n:\tt)\,\bs n\otimes \bs n .	
	\end{aligned}
\end{equation}
Substituting the rotational director equation \eqref{eq: offlowrot} into
\eqref{eq: NtN}, we obtain
\begin{equation}\label{eq: Neli}
\widehat{\bs N}
=
\frac{1}{\gamma_1}(\bs n\times\bs\mu)\times\bs n
-\frac{\gamma_2}{\gamma_1}
(\bs n\times(\tt\cdot\bs n))\times\bs n .
\end{equation}
Inserting \eqref{eq: Neli} into the Leslie stress
\eqref{eq: Lstress} and using \eqref{eq: tautensor}, one has
\begin{equation}\label{eq: sigLnewpre}
	\begin{aligned}
		\sig^{\rm L}
		=&
		\Big(\alpha_1+\frac{(\alpha_2+\alpha_3)\gamma_2}{\gamma_1}\Big)(\bs n\otimes\bs n:\tt)\bs n\otimes\bs n
		+\frac{\alpha_2}{\gamma_1}
		\bs n\otimes\big((\bs n\times\bs\mu)\times\bs n\big)
		+\frac{\alpha_3}{\gamma_1}
		\big((\bs n\times\bs\mu)\times\bs n\big)\otimes\bs n
		\\
		&+\alpha_4\tt
		+\Big(\alpha_5-\frac{\alpha_2\gamma_2}{\gamma_1}\Big)
		\bs n\otimes(\tt\cdot\bs n)
		+\Big(\alpha_6-\frac{\alpha_3\gamma_2}{\gamma_1}\Big)
		(\tt\cdot\bs n)\otimes\bs n.
	\end{aligned}
\end{equation}
By the Parodi relation \eqref{eq:Parodi} and the coefficients definitions \eqref{eq: coeff2}, one has 
\begin{equation*}
	\frac{(\alpha_2+\alpha_3)\gamma_2}{\gamma_1}=\frac{\gamma_2^2}{\gamma_1}, \quad \alpha_5-\frac{\alpha_2\gamma_2}{\gamma_1}
	=
	\alpha_6-\frac{\alpha_3\gamma_2}{\gamma_1}
	=
	\frac{\alpha_5+\alpha_6}{2}
	-\frac{\gamma_2^2}{2\gamma_1}.
\end{equation*}
Therefore \eqref{eq: sigLnewpre} reduces exactly to
\eqref{eq: sigLnew}. This completes the proof.
\end{proof}

\begin{rem}
If one substitutes only \eqref{eq: NtN} into \eqref{eq: Lstress} and keeps the auxiliary variable $\widehat{\bs N}$, rather than further eliminating $\bs n_t$ through \eqref{eq: offlowrot}, one obtains another equivalent form of the Leslie stress. Our subsequent analysis can also be carried out for that formulation. However, eliminating $\bs n_t$ through \eqref{eq: offlowrot} removes several auxiliary terms and leads to a more compact form, which is advantageous for the construction and implementation of the numerical scheme.
\end{rem}

Although Lemma~\ref{eq: Nnew} rewrites the Leslie stress into a form better adapted to the rotational director equation, this alone is still not sufficient for deriving a discrete energy law in a transparent way. To complete the reformulation, we also need a corresponding identity for the Ericksen stress.

\begin{lemma}\label{lm: sigEreform}
Assume $|\bs n(\bs x,t)|=1$. Then
\begin{equation}\label{eq: sigEreform}
\nabla\cdot \sig^{\mathrm{E}}
+\nabla \bs n\cdot \big((\bs n\times \bs \mu)\times \bs n\big)
=
-\nabla f(\bs n,\nabla \bs n).
\end{equation}
\end{lemma}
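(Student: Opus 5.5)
The plan is a direct index computation: first obtain the classical identity $\nabla\cdot\sig^{\rm E}=-\nabla\bs n\cdot\bs\mu-\nabla f$, valid without any constraint, and then use $|\bs n|=1$ to replace $\bs\mu$ by its tangential part $(\bs n\times\bs\mu)\times\bs n$.

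\emph{Step 1: write the Ericksen stress in components.} With the conventions of the Notations subsection, the matrix $\partial f/\partial(\nabla\bs n)$ has entries $\partial f/\partial(\partial_i n_k)$, and $(\nabla\bs n)^{\intercal}$ has $(k,j)$ entry $\partial_j n_k$. Hence
\begin{equation*}
\sigma^{\rm E}_{ij}=-\frac{\partial f}{\partial(\partial_i n_k)}\,\partial_j n_k .
\end{equation*}
Since $(\nabla\cdot\mathbf p)_j=\partial_i p_{ij}$, the product rule gives
\begin{equation*}
(\nabla\cdot\sig^{\rm E})_j=-\partial_i\Big(\frac{\partial f}{\partial(\partial_i n_k)}\Big)\partial_j n_k-\frac{\partial f}{\partial(\partial_i n_k)}\,\partial_i\partial_j n_k .
\end{equation*}

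\emph{Step 2: handle the first term.} By definition \eqref{eq: mu}, we have $\partial_i\big(\partial f/\partial(\partial_i n_k)\big)=\mu_k+\partial f/\partial n_k$.

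\emph{Step 3: handle the second term.} Apply the chain rule to $f(\bs n,\nabla\bs n)$:
\begin{equation*}
\partial_j f=\frac{\partial f}{\partial n_k}\partial_j n_k+\frac{\partial f}{\partial(\partial_i n_k)}\partial_j\partial_i n_k .
\end{equation*}
Using the symmetry of second derivatives and substituting both expressions, the $\partial f/\partial n_k$ terms cancel. This leaves
\begin{equation*}
(\nabla\cdot\sig^{\rm E})_j=-\partial_j n_k\,\mu_k-\partial_j f,
\qquad\text{i.e.}\qquad
\nabla\cdot\sig^{\rm E}+\nabla\bs n\cdot\bs\mu=-\nabla f,
\end{equation*}
with $(\nabla\bs n\cdot\bs a)_j=\partial_j n_k a_k$. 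No constraint on $\bs n$ has been used so far.

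\emph{Step 4: introduce the rotational form.} By \eqref{id: crossid1} and $|\bs n|=1$, we have $(\bs n\times\bs\mu)\times\bs n=\bs\mu-(\bs n\cdot\bs\mu)\bs n$. Therefore
\begin{equation*}
\nabla\bs n\cdot\big((\bs n\times\bs\mu)\times\bs n\big)=\nabla\bs n\cdot\bs\mu-(\bs n\cdot\bs\mu)\,\nabla\bs n\cdot\bs n .
\end{equation*}
Since $(\nabla\bs n\cdot\bs n)_j=\partial_j n_k\,n_k=\tfrac12\partial_j|\bs n|^2=0$, the last term vanishes, and \eqref{eq: sigEreform} follows.

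The only delicate point is bookkeeping. One must keep the index ordering of $\partial f/\partial(\nabla\bs n)$, $(\nabla\bs n)^{\intercal}$, and the divergence convention consistent, so that the chain-rule cancellation of the $\partial f/\partial n_k$ terms is exact. One must also remember that $\bs\mu$ is the unconstrained variational derivative, including the explicit $\bs n$-dependence of $f$, which is precisely what makes that cancellation work.
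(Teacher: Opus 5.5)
Your proposal is correct and follows essentially the same route as the paper. You first establish the unconstrained identity $\nabla\cdot\sig^{\rm E}+\nabla\bs n\cdot\bs\mu=-\nabla f$ using the product rule, the definition of $\bs\mu$, the chain rule and the symmetry of second derivatives; you then use $\nabla\bs n\cdot\bs n=\tfrac12\nabla|\bs n|^2=\bs 0$ together with \eqref{id: crossid1} to replace $\bs\mu$ by $(\bs n\times\bs\mu)\times\bs n$. Your index-notation bookkeeping makes the cancellation of the $\partial f/\partial n_k$ terms more transparent than the paper's tensor-notation manipulation, but the argument is the same.
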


\begin{proof}
We first prove the identity
\begin{equation}\label{eq: equalEricksenmu}
\nabla\cdot \sig^{\mathrm{E}}+\nabla \bs n\cdot \bs \mu
=
-\nabla f(\bs n,\nabla \bs n).
\end{equation}
Indeed, by the definition of the left and right dot product of a matrix and a vector, for any vector $\bs a$, we have 
\begin{equation*}
	\nabla \bs n \cdot \bs a=\bs a \cdot (\nabla \bs n)^{\intercal}.
\end{equation*}
Moreover, by the definition of contraction between matrix and vector, and using the commutativity of partial derivatives, for any matrix $\mathbf p$, one has 
\begin{equation*}
	\nabla (\nabla \bs n):\mathbf p=\mathbf p :\nabla \big[(\nabla \bs n)^{\intercal}\big].
\end{equation*}
Therefore,
\begin{equation*}
\begin{aligned}
\nabla\cdot \sig^{\mathrm{E}}+\nabla \bs n\cdot \bs \mu
&=
\nabla\cdot\bigg(-\frac{\partial f}{\partial(\nabla \bs n)}\cdot (\nabla \bs n)^{\intercal}\bigg)
+\nabla \bs n\cdot
\bigg(
\nabla\cdot \frac{\partial f}{\partial(\nabla \bs n)}-\frac{\partial f}{\partial \bs n}
\bigg)\\
 &=
-\nabla \cdot \frac{\partial f}{\partial(\nabla \bs n)}\cdot (\nabla \bs n)^{\intercal}-\frac{\partial f}{\partial(\nabla \bs n)}:\nabla \big[(\nabla \bs n)^{\intercal}\big]+\nabla \bs n \cdot \bigg(
\nabla\cdot \frac{\partial f}{\partial(\nabla \bs n)}-\frac{\partial f}{\partial \bs n}
\bigg)
\\
&=
-\nabla(\nabla \bs n):\frac{\partial f}{\partial(\nabla \bs n)}
-\nabla \bs n\cdot \frac{\partial f}{\partial \bs n}=
-\nabla f(\bs n,\nabla \bs n).
\end{aligned}
\end{equation*}

Next, since $|\bs n|=1$, we have $\nabla \bs n\cdot \bs n=0$. Therefore,
\begin{equation}\label{eq: equalFmu}
\begin{aligned}
\nabla \bs n\cdot \bs \mu
&=
\nabla \bs n\cdot \bs \mu-(\bs \mu\cdot \bs n)(\nabla \bs n\cdot \bs n)=
\nabla \bs n\cdot \big((\bs n\times \bs \mu)\times \bs n\big),
\end{aligned}
\end{equation}
where we used \eqref{id: crossid1}. Substituting \eqref{eq: equalFmu} into \eqref{eq: equalEricksenmu} yields \eqref{eq: sigEreform}.
\end{proof}

We can now combine Lemmas~\ref{lm: nreform}--\ref{lm: sigEreform} to obtain an equivalent reformulation of the full EL model. The importance of this reformulation is that the geometric structure of the director equation and the energy dissipative structure of the stress coupling are now both explicit, which substantially simplifies the design of a numerical method that is simultaneously length-preserving and energy-stable.

\begin{theorem}
The original Ericksen--Leslie model \eqref{eq: elmodel} is equivalent to the following reformulated system:
\begin{subequations}\label{eq: elmodelrot}
\begin{align}
& \bs v_t+\bs v\cdot \nabla \bs v
=
-\nabla P+\frac{\gamma}{\rm Re}\Delta \bs v
+\frac{1-\gamma}{\rm Re}\nabla\cdot \widehat{\sig}^{\mathrm{L}}
-\frac{1-\gamma}{\rm Re}\nabla \bs n\cdot \big((\bs n\times \bs \mu) \times \bs n\big),
\\
& \nabla\cdot \bs v=0,
\\
& \bs n_t
=
\frac{1}{\gamma_1}
\Big(
\bs n\times
\big(
\bs \mu-\gamma_1(\bs v\cdot \nabla \bs n+\om\cdot \bs n)-\gamma_2\tt\cdot \bs n
\big)
\Big)\times \bs n,
\end{align}
\end{subequations}
where we recall that $\bs \mu$, $\tt$ and $\om$ are given in \eqref{eq: mu} and \eqref{eq: tN}, respectively, $\widehat{\sig}^{\rm L}$ is defined in \eqref{eq: sigLnew} and $P:=p+\big({(1-\gamma)}/{\rm Re} \big)f(\bs n,\nabla \bs n).$
\end{theorem}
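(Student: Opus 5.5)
The proof assembles Lemmas~\ref{lm: nreform}--\ref{lm: sigEreform}, showing each implication in turn. Throughout, we assume the initial datum satisfies $|\bs n_0|=1$, as in Lemma~\ref{lm: nreform}.

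\textbf{From \eqref{eq: elmodel} to \eqref{eq: elmodelrot}.} Let $(\bs v,p,\bs n)$ solve \eqref{eq: elmodel}.

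First, \eqref{eq: offlow} and \eqref{eq: unitv} together give the rotational director equation \eqref{eq: offlowrot}, by the converse direction of Lemma~\ref{lm: nreform}.

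Second, since $|\bs n|=1$ holds for all $t$, Lemma~\ref{eq: Nnew} applies and gives $\sig^{\rm L}=\widehat{\sig}^{\rm L}$. Here the derivation of $\widehat{\sig}^{\rm L}$ used \eqref{eq: offlowrot}, which is now available.

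Third, Lemma~\ref{lm: sigEreform} allows us to write
\[
\nabla\cdot\sig^{\rm E}=-\nabla f-\nabla\bs n\cdot\big((\bs n\times\bs\mu)\times\bs n\big).
\]
Substituting this into \eqref{eq: eleq}, the term $-\frac{1-\gamma}{\rm Re}\nabla f$ is a gradient, so it combines with $-\nabla p$ into $-\nabla P$ with $P=p+\frac{1-\gamma}{\rm Re}f$. This is exactly the momentum equation of \eqref{eq: elmodelrot}. The incompressibility condition is unchanged.

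\textbf{From \eqref{eq: elmodelrot} to \eqref{eq: elmodel}.} Let $(\bs v,P,\bs n)$ solve \eqref{eq: elmodelrot}, and define $p:=P-\frac{1-\gamma}{\rm Re}f$.

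The first step is to recover the constraint. By the forward direction of Lemma~\ref{lm: nreform}, the third equation of \eqref{eq: elmodelrot} together with $|\bs n_0|=1$ gives both $|\bs n|=1$ and \eqref{eq: offlow}. This must be done before anything else, because Lemmas~\ref{eq: Nnew} and \ref{lm: sigEreform} both assume $|\bs n|=1$.

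Once the constraint holds, Lemma~\ref{eq: Nnew} gives $\widehat{\sig}^{\rm L}=\sig^{\rm L}$ and Lemma~\ref{lm: sigEreform} gives the same identity as above. Reversing the pressure substitution then recovers \eqref{eq: eleq}, with the same initial and boundary data.

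\textbf{Main obstacle.} The delicate point is the identification $\widehat{\sig}^{\rm L}=\sig^{\rm L}$. It is not a purely algebraic identity: it holds only along solutions, because it uses the director equation to eliminate $\bs N$, and it also uses the Parodi relation \eqref{eq:Parodi}. I will therefore state explicitly that equivalence is meant at the level of solutions, and not as an identity between the individual terms of the two systems.

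A secondary point is the meaning of \enquote{equivalent} for the pressure. The pressure is determined only up to the gradient correction $\frac{1-\gamma}{\rm Re}\nabla f$, so equivalence should be understood as a bijection between solution triples $(\bs v,p,\bs n)\leftrightarrow(\bs v,P,\bs n)$. This requires $f(\bs n,\nabla\bs n)$ to be regular enough for $\nabla f$ to make sense, which I will assume for smooth solutions.
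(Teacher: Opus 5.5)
Your proposal is correct and is essentially the paper's own proof: the paper simply assembles Lemmas~\ref{lm: nreform}, \ref{eq: Nnew} and \ref{lm: sigEreform}, then absorbs $\frac{1-\gamma}{\rm Re}\nabla f$ into the modified pressure $P$. Your added care makes explicit what the paper's one-line proof leaves implicit: in the reverse direction you recover $|\bs n|=1$ first, and you note that $\widehat{\sig}^{\rm L}=\sig^{\rm L}$ uses both the director equation and the Parodi relation, so the equivalence holds at the level of solutions with $|\bs n_0|=1$.
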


\begin{proof}
The reformulated EL system follows directly from Lemmas~\ref{lm: nreform}, \ref{eq: Nnew}, and \ref{lm: sigEreform}.
\end{proof}

\begin{rem}
The reformulated system \eqref{eq: elmodelrot} directly reveals two essential ingredients that will be used in the design of structure-preserving schemes in the next section. First, the director equation is written in a fully rotational form, so that the unit-length constraint is encoded in the algebraic structure of the evolution law itself, rather than imposed as a constraint equation. Second, the momentum equation separates the reformulated Leslie stress from the Ericksen contribution
$
-\nabla \bs n \cdot \big((\bs n\times \bs \mu)\times \bs n\big),
$
while the scalar term $f(\bs n,\nabla \bs n)$ is absorbed into the modified pressure $P$. 

The reformulated system explicitly separates the convective and dissipative parts under the unit-length constraint and the Parodi relation \eqref{eq:Parodi}. This structural clarity is directly reflected in the proof of the discrete energy dissipation law below in Theorem \ref{thm: main-scheme}.
\end{rem}

\section{Structure-preserving rotational discrete gradient time-discrete scheme}\label{sect: 3}

In this section, we present the proposed rotational discrete gradient scheme for the reformulated EL system \eqref{eq: elmodelrot}. The scheme is designed to preserve simultaneously the unit-length constraint and a discrete energy-dissipation law. Let
$
0=t_0<t_1<\cdots<t_M=T
$
be a partition of the time interval $[0,T]$, and write $\tau_m=t_{m+1}-t_m$ for the time-step size. For a generic variable $\chi$, we denote by $\chi^m$ its numerical approximation at time $t_m$, and define the arithmetic midpoint by
$
\chi^{m+\frac12}:={(\chi^{m+1}+\chi^m)}/{2}.
$

\subsection{A continuous--discrete energy variation pair for the Oseen--Frank energy}

A key difficulty in the time discretization of the reformulated EL system is the treatment of the Oseen--Frank energy. In order for the discrete scheme to reproduce the exact energy increment, one needs a time-discrete counterpart of the variational derivative that mimics the continuous identity
$$
\frac{d}{dt}\mathcal F[\bs n]
=
\int_\Omega
\frac{\delta \mathcal F}{\delta \bs n}\cdot \bs n_t\,dV.
$$
For this purpose, we employ a discrete gradient of the Oseen--Frank energy introduced in \cite{xu2024second} for Oseen--Frank gradient flows. We recall it here because, in the present paper, it serves as a crucial discretization ingredient in coupling the Oseen--Frank energy with the rotational director equation and the reformulated stress tensors of the full EL system.

\begin{prop}
\label{prop:OF-discrete-gradient}
The variational derivative of the Oseen--Frank energy is
\begin{equation}\label{eq: varienergy}
\begin{aligned}
\frac{\delta \mathcal{F}[\bs n]}{\delta \bs n}
=
&
-\kappa_1 \nabla(\nabla \cdot \bs n)
+\kappa_2\Big(
(\bs n\cdot \nabla\times \bs n)(\nabla\times \bs n)
+\nabla\times\big((\bs n\cdot \nabla\times \bs n)\bs n\big)
\Big) \\
&+\kappa_3\Big(
(\nabla\times \bs n)\times(\bs n\times \nabla\times \bs n)
+\nabla\times\big((\bs n\times \nabla\times \bs n)\times \bs n\big)
\Big).
\end{aligned}
\end{equation}
For two consecutive states $\bs n^m$ and $\bs n^{m+1}$, define the Oseen--Frank discrete gradient by
\begin{equation}
\begin{aligned}
D_{\mathcal{F}}^O(\bs n)\big|^{m+\frac12}
=
&-\kappa_1 \nabla\Big(\nabla \cdot \bs n^{m+\frac12}\Big) \\
&+\kappa_2\bigg(
\beta^{m+\frac12}\nabla\times \bs n^{m+\frac12}
+\nabla\times\Big(\beta^{m+\frac12}\bs n^{m+\frac12}\Big)
\bigg) \\
&+\kappa_3\bigg(
\Big(\nabla\times \bs n^{m+\frac12}\Big)\times \boldsymbol{\omega}^{m+\frac12}
+\nabla\times\Big(\boldsymbol{\omega}^{m+\frac12}\times \bs n^{m+\frac12}\Big)
\bigg),
\end{aligned}
\end{equation}
where
\begin{equation}\label{eq: beta}
\beta^k
=
\bs n^k\cdot \nabla\times \bs n^k,\qquad
\boldsymbol{\omega}^k
=
\bs n^k\times \nabla\times \bs n^k,\quad k=m,m+1,
\end{equation}
and $\beta^{m+\frac12}$ and $\boldsymbol{\omega}^{m+\frac12}$ are defined by the midpoint convention above.
Then $D_{\mathcal F}^O(\bs n)\big|^{m+\frac12}$ satisfies the energy difference identity
\begin{equation}\label{eq: endiff}
\int_\Omega
D_{\mathcal F}^O(\bs n)\big|^{m+\frac12}\cdot (\bs n^{m+1}-\bs n^m)\,dV
=
\mathcal F[\bs n^{m+1}]-\mathcal F[\bs n^m].
\end{equation}
\end{prop}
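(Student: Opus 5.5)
Both parts rest on the same approach: write the reduced Oseen--Frank energy (the $\kappa_4$ null-Lagrangian term dropped, as agreed above) as
\[
\mathcal F[\bs n]=\frac12\int_\Omega\Big(\kappa_1(\nabla\cdot\bs n)^2+\kappa_2\beta^2+\kappa_3|\boldsymbol\omega|^2\Big)\,dV,
\qquad \beta=\bs n\cdot\nabla\times\bs n,\quad \boldsymbol\omega=\bs n\times\nabla\times\bs n,
\]
and treat the three terms separately. Each integrand is a square of a quantity that is either linear in $\bs n$ or bilinear in $(\bs n,\nabla\bs n)$. We need the discrete identity $a_{m+1}^2-a_m^2=2\,a^{m+\frac12}(a_{m+1}-a_m)$ for squares, together with a product rule for midpoints. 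Throughout, integrations by parts are justified by the Dirichlet condition $\bs n^{m+1}=\bs n^m=\bs g_2$ on $\partial\Omega$, so $\delta\bs n:=\bs n^{m+1}-\bs n^m$ vanishes on $\partial\Omega$; the periodic case is analogous.

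\emph{Variational derivative \eqref{eq: varienergy}.} For $\frac12\kappa_1(\nabla\cdot\bs n)^2$ the first variation is $\kappa_1\int(\nabla\cdot\bs n)(\nabla\cdot\bs h)$, which gives $-\kappa_1\nabla(\nabla\cdot\bs n)$ after integration by parts. For the twist term the variation is $\kappa_2\int\beta\,(\bs h\cdot\nabla\times\bs n+\bs n\cdot\nabla\times\bs h)$. The identity $\int \bs a\cdot\nabla\times\bs h=\int\bs h\cdot\nabla\times\bs a$ (valid for $\bs h=0$ on $\partial\Omega$) converts the second piece into $\bs h\cdot\nabla\times(\beta\bs n)$. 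For the bend term the variation is $\kappa_3\int\boldsymbol\omega\cdot(\bs h\times\nabla\times\bs n+\bs n\times\nabla\times\bs h)$. The triple product identity \eqref{id: crossid2} rewrites the first piece as $\bs h\cdot((\nabla\times\bs n)\times\boldsymbol\omega)$ and the second as $(\nabla\times\bs h)\cdot(\boldsymbol\omega\times\bs n)$; the same curl integration by parts then finishes it. This reproduces \eqref{eq: varienergy}.

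\emph{Energy difference identity \eqref{eq: endiff}.} Apply the same computation with $\bs h=\delta\bs n$ and midpoint coefficients. For the splay term,
\[
\int -\kappa_1\nabla(\nabla\cdot\bs n^{m+\frac12})\cdot\delta\bs n
=\kappa_1\int \nabla\cdot\bs n^{m+\frac12}\,\nabla\cdot\delta\bs n
=\frac{\kappa_1}{2}\int\Big((\nabla\cdot\bs n^{m+1})^2-(\nabla\cdot\bs n^m)^2\Big),
\]
which is exact because this term is quadratic. For the twist term, after moving the curl,
\[
\int\big(\beta^{m+\frac12}\nabla\times\bs n^{m+\frac12}+\nabla\times(\beta^{m+\frac12}\bs n^{m+\frac12})\big)\cdot\delta\bs n
=\int\beta^{m+\frac12}\big(\delta\bs n\cdot\nabla\times\bs n^{m+\frac12}+\bs n^{m+\frac12}\cdot\nabla\times\delta\bs n\big).
\]
The key algebraic step is the discrete product rule: for a bilinear form $B$, $B(\bs a^{m+1},\bs b^{m+1})-B(\bs a^m,\bs b^m)=B(\delta\bs a,\bs b^{m+\frac12})+B(\bs a^{m+\frac12},\delta\bs b)$. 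With $B(\bs a,\bs b)=\bs a\cdot\nabla\times\bs b$, the bracket above equals exactly $\beta^{m+1}-\beta^m$. Therefore the integrand is $\beta^{m+\frac12}(\beta^{m+1}-\beta^m)=\frac12((\beta^{m+1})^2-(\beta^m)^2)$. The bend term works identically with the vector bilinear form $B(\bs a,\bs b)=\bs a\times\nabla\times\bs b$. After the triple-product rearrangement and curl integration by parts, the integrand becomes $\boldsymbol\omega^{m+\frac12}\cdot\big(\delta\bs n\times\nabla\times\bs n^{m+\frac12}+\bs n^{m+\frac12}\times\nabla\times\delta\bs n\big)=\boldsymbol\omega^{m+\frac12}\cdot(\boldsymbol\omega^{m+1}-\boldsymbol\omega^m)=\frac12(|\boldsymbol\omega^{m+1}|^2-|\boldsymbol\omega^m|^2)$. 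Summing the three contributions yields \eqref{eq: endiff}.

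The main obstacle is bookkeeping. First, the signs and orderings in the cross-product rearrangements for the bend term must match the stated form $(\nabla\times\bs n^{m+\frac12})\times\boldsymbol\omega^{m+\frac12}+\nabla\times(\boldsymbol\omega^{m+\frac12}\times\bs n^{m+\frac12})$. Second, one must check that the midpoint product rule holds exactly, not merely to second order. It does, since $\tfrac12(\bs a^{m+1}+\bs a^m)$ paired with a difference telescopes: the cross terms $B(\bs a^{m+1},\bs b^m)$ and $B(\bs a^m,\bs b^{m+1})$ cancel. Unit length is never used, so the identity holds for arbitrary $\bs n^m,\bs n^{m+1}$ with matching boundary data.
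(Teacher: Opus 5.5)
Your proof is correct and takes the same approach as the paper. The paper itself defers this proposition to \cite{xu2024second}, but its proof of the fully discrete analogue, Lemma~\ref{lemma: constrained-dg-identity}, is exactly your argument: the exact midpoint product rule $\bs\omega^{m+1}-\bs\omega^m=(\bs n^{m+1}-\bs n^m)\times\nabla\times\bs n^{m+\frac12}+\bs n^{m+\frac12}\times\nabla\times(\bs n^{m+1}-\bs n^m)$ and its splay/twist counterparts, the cyclic identity \eqref{id: crossid2}, and $\bs\omega^{m+\frac12}\cdot(\bs\omega^{m+1}-\bs\omega^m)=\frac12(|\bs\omega^{m+1}|^2-|\bs\omega^m|^2)$ with $\beta^{m+\frac12},\bs\omega^{m+\frac12}$ taken as averages, with your use of $\bs n^{m+1}=\bs n^m$ on $\partial\Omega$ matching the time-independent Dirichlet data assumed there.
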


\begin{proof}
The construction of $D_{\mathcal F}^O$ and the identity \eqref{eq: endiff} were established in \cite{xu2024second}. We therefore omit the proof here and only record the result in a form tailored to the full EL discretization. 
\end{proof}

\subsection{A midpoint rotational discrete gradient time-discrete scheme}

We now introduce the time-semidiscrete scheme for the reformulated EL system \eqref{eq: elmodelrot}. The scheme combines a midpoint discretization of the velocity equation, a rotational midpoint treatment for the reformulated director equation and the Ericksen and Leslie stresses, and the discrete gradient $D_{\mathcal F}^O$ for the Oseen--Frank energy. In particular, the scheme accommodates the fully anisotropic fluid--director coupling through the general Ericksen and Leslie stress tensors, rather than relying on isotropic or one-constant simplifications. This combination is designed to retain, at the time-discrete level, the geometric structure of the director evolution and the dissipative structure of the full EL system. 
\vspace{5pt}

\paragraph{\textbf{Time-discrete scheme}.}\label{sch: sch1}
For each $m\ge 0$, given $(\bs v^m,\bs n^m)$ with $\nabla\cdot \bs v^m=0$ and $|\bs n^m|=1$, find $(\bs v^{m+1},\bs n^{m+1},P^{m+\frac12})$ such that
\begin{subequations}\label{eq: mainscheme}
\begin{align}
	\begin{split}
		\frac{\bs v^{m+1}-\bs v^m}{\tau_m}
		&=
		-\bs v^{m+\frac12}\cdot \nabla \bs v^{m+\frac12}
		-\nabla P^{m+\frac12}
		+\frac{\gamma}{\rm Re}\Delta \bs v^{m+\frac12}
		+\frac{1-\gamma}{\rm Re}\nabla\cdot \widehat{\sig}^{\rm L}\big|^{m+\frac12}
		\\
		&\quad
		+\frac{1-\gamma}{\rm Re}\nabla \bs n^{m+\frac12}\cdot
		\Big(
		\big(
		\bs n^{m+\frac12}\times D_{\mathcal F}^O(\bs n)\big|^{m+\frac12}
		\big)\times \bs n^{m+\frac12}
		\Big),
	\end{split}\label{eq: mainscheme-v}\\
	\begin{split}
		\frac{\bs n^{m+1}-\bs n^m}{\tau_m}
		&=
		-\frac{1}{\gamma_1}
		\Big(
		\bs n^{m+\frac12}\times
		\Big[
		D_{\mathcal F}^O(\bs n)\big|^{m+\frac12}
		+\gamma_1\Big(
		\bs v^{m+\frac12}\cdot \nabla \bs n^{m+\frac12}
		+\om^{m+\frac12}\cdot \bs n^{m+\frac12}
		\Big)
		\\
		&\qquad\qquad\qquad\qquad
		+\gamma_2 \tt^{m+\frac12}\cdot \bs n^{m+\frac12}
		\Big]
		\Big)\times \bs n^{m+\frac12},
	\end{split}\label{eq: mainscheme-n}\\
	\begin{split}
		\nabla\cdot \bs v^{m+1}&=0.
	\end{split}\label{eq: mainscheme-div}
\end{align}
\end{subequations}
Here the discrete Leslie stress is defined by
\begin{equation}\label{eq: main-discrete-sigmaL}
\begin{aligned}
\widehat{\sig}^{\rm L}\big|^{m+\frac12}
:=
&\;
\Big(\alpha_1+\frac{\gamma_2^2}{\gamma_1}\Big)
\big(\bs n^{m+\frac12}\otimes \bs n^{m+\frac12}:\tt^{m+\frac12}\big)
\bs n^{m+\frac12}\otimes \bs n^{m+\frac12}
\\
&-\frac{\alpha_2}{\gamma_1}
\bs n^{m+\frac12}\otimes
\Big(
\big(\bs n^{m+\frac12}\times D_{\mathcal F}^O(\bs n)\big|^{m+\frac12}\big)
\times \bs n^{m+\frac12}
\Big)
\\
&-\frac{\alpha_3}{\gamma_1}
\Big(
\big(\bs n^{m+\frac12}\times D_{\mathcal F}^O(\bs n)\big|^{m+\frac12}\big)
\times \bs n^{m+\frac12}
\Big)\otimes \bs n^{m+\frac12}
+\alpha_4 \tt^{m+\frac12}
\\
&+\Big(\frac{\alpha_5+\alpha_6}{2}-\frac{\gamma_2^2}{2\gamma_1}\Big)
\Big(
\bs n^{m+\frac12}\otimes (\tt^{m+\frac12}\cdot \bs n^{m+\frac12})
+(\tt^{m+\frac12}\cdot \bs n^{m+\frac12})\otimes \bs n^{m+\frac12}
\Big).
\end{aligned}
\end{equation}

\begin{thm}\label{thm: main-scheme}
Assume that $|\bs n^0|=1$, $\nabla\cdot \bs v^0=0$, and either $\bs v^m|_{\partial\Omega}=\bs 0$, $\bs n^m|_{\partial\Omega}=\bs g_2(\bs x)$ for all $m\ge 0$, or periodic boundary conditions are imposed. If in addition the material coefficients satisfy \eqref{eq: paracons} and $\gamma\in(0,1)$, then the scheme consisting of equations \eqref{eq: mainscheme}--\eqref{eq: main-discrete-sigmaL} satisfies

\smallskip
\noindent
(i) \textbf{Unit-length preservation:}  
\begin{equation}\label{eq: lengn}
|\bs n^{m+1}|=1.
\end{equation}

\smallskip
\noindent
(ii) \textbf{Discrete energy dissipation:} 
\begin{equation}\label{eq: main-energy-lawfinal}
\frac{\rm Re}{2(1-\gamma)}
\|\bs v^{m+1}\|^2
+\mathcal F[\bs n^{m+1}]
\leq   \frac{\rm Re}{2(1-\gamma)}
\|\bs v^{m}\|^2
+\mathcal F[\bs n^{m}] .
\end{equation}
\end{thm}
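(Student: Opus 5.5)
The plan is to prove (i) by a pointwise orthogonality argument, which needs no unit-length assumption at the midpoint. We then prove (ii) by testing the two evolution equations against suitable multipliers and checking that every coupling term cancels, using only algebraic identities that remain valid when $|\bs n^{m+\frac12}|\neq 1$. This matters because $|\bs n^{m+\frac12}|\le 1$ in general, even when $|\bs n^m|=|\bs n^{m+1}|=1$. So none of the simplifications $(\bs n\times\bs a)\times\bs n=\bs a-(\bs a\cdot\bs n)\bs n$ used in Lemmas~\ref{lm: nreform}--\ref{lm: sigEreform} may be invoked. We only use \eqref{id: crossid2}, \eqref{id: crossid3}, and the consequence
\begin{equation*}
\big((\bs n\times\bs a)\times\bs n\big)\cdot\bs b=(\bs n\times\bs a)\cdot(\bs n\times\bs b),
\end{equation*}
valid for any vectors $\bs n,\bs a,\bs b$.

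\textbf{Step 1 (unit length).} Write $\bs n:=\bs n^{m+\frac12}$. The right-hand side of \eqref{eq: mainscheme-n} has the form $(\bs n\times\bs a)\times\bs n$, which is orthogonal to $\bs n$. Taking the pointwise dot product of \eqref{eq: mainscheme-n} with $\bs n^{m+\frac12}$ gives
\begin{equation*}
\tfrac12\big(|\bs n^{m+1}|^2-|\bs n^m|^2\big)=(\bs n^{m+1}-\bs n^m)\cdot\bs n^{m+\frac12}=0.
\end{equation*}
Induction on $m$ from $|\bs n^0|=1$ then yields \eqref{eq: lengn}.

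\textbf{Step 2 (energy).} Set $D:=D_{\mathcal F}^O(\bs n)|^{m+\frac12}$ and $\bs w:=(\bs n\times D)\times\bs n$.
\begin{itemize}[label={}]
\item \emph{Director equation.} Take the $L^2$ inner product of \eqref{eq: mainscheme-n} with $\tau_m D$. By Proposition~\ref{prop:OF-discrete-gradient}, the left side equals $\mathcal F[\bs n^{m+1}]-\mathcal F[\bs n^m]$. By the identity above, the right side equals
\begin{equation*}
-\tau_m\int_\Omega\Big(\tfrac1{\gamma_1}|\bs n\times D|^2+(\bs v^{m+\frac12}\cdot\nabla\bs n)\cdot\bs w+(\om^{m+\frac12}\cdot\bs n)\cdot\bs w+\tfrac{\gamma_2}{\gamma_1}(\tt^{m+\frac12}\cdot\bs n)\cdot\bs w\Big)\,dV.
\end{equation*}
\item \emph{Momentum equation.} Take the inner product of \eqref{eq: mainscheme-v} with $\tau_m\frac{\rm Re}{1-\gamma}\bs v^{m+\frac12}$. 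The left side gives the kinetic energy difference. The pressure term vanishes because $\nabla\cdot\bs v^{m+\frac12}=0$, which follows from \eqref{eq: mainscheme-div} and $\nabla\cdot\bs v^m=0$. The trilinear convection term vanishes by skew-symmetry, using incompressibility together with either $\bs v=\bs 0$ on $\partial\Omega$ or periodicity. The viscous term gives $-\frac{\gamma}{1-\gamma}\|\nabla\bs v^{m+\frac12}\|^2$. The Ericksen term gives $+\int(\bs v^{m+\frac12}\cdot\nabla\bs n)\cdot\bs w$, which cancels the convective coupling term from the director equation.
\item \emph{Leslie stress.} Integrating by parts (boundary terms vanish), the stress term becomes $-\int\nabla\bs v^{m+\frac12}:\widehat{\sig}^{\rm L}|^{m+\frac12}$, and we split $\nabla\bs v^{m+\frac12}=\tt^{m+\frac12}+\om^{m+\frac12}$.
\begin{itemize}[label={}]
\item The $\alpha_2,\alpha_3$ terms pair with $\om$ with total coefficient $(\alpha_3-\alpha_2)/\gamma_1=1$, by antisymmetry of $\om$ and \eqref{eq: coeff2}. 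This yields $\int(\om\cdot\bs n)\cdot\bs w$ and cancels the $\om$-term from the director equation.
\item The $\alpha_2,\alpha_3$ terms pair with $\tt$ with total coefficient $(\alpha_2+\alpha_3)/\gamma_1=\gamma_2/\gamma_1$, by the Parodi relation \eqref{eq:Parodi}. This yields $\frac{\gamma_2}{\gamma_1}\int(\tt\cdot\bs n)\cdot\bs w$ and cancels the $\gamma_2$-term.
\item The remaining symmetric terms contribute
\begin{equation*}
-\Big(\alpha_1+\tfrac{\gamma_2^2}{\gamma_1}\Big)(\bs n\otimes\bs n:\tt)^2-\alpha_4|\tt|^2-\Big(\alpha_5+\alpha_6-\tfrac{\gamma_2^2}{\gamma_1}\Big)|\tt\cdot\bs n|^2,
\end{equation*}
using $(\bs n\otimes\tt\bs n+\tt\bs n\otimes\bs n):\tt=2|\tt\cdot\bs n|^2$, again without needing $|\bs n|=1$.
\end{itemize}
\end{itemize}
Adding the two tested equations gives an exact discrete analogue of \eqref{eq: endisslaw}. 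Under \eqref{eq: paracons} and $\gamma\in(0,1)$, every remaining term is nonpositive, which proves \eqref{eq: main-energy-lawfinal}.

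\textbf{Main obstacle.} The delicate step is the Leslie-stress bookkeeping: getting the transpose and sign conventions right with $(\nabla\bs v)_{ij}=\partial_iv_j$ and the minus signs in \eqref{eq: main-discrete-sigmaL}. These must make $\nabla\bs v:(\bs n\otimes\bs w)$ and $\nabla\bs v:(\bs w\otimes\bs n)$ produce exactly $\pm(\om\cdot\bs n)\cdot\bs w$ and $(\tt\cdot\bs n)\cdot\bs w$ with coefficients matching the director equation. I would verify this component-wise first, then confirm that each cancellation uses only the triple-product identity and never $|\bs n^{m+\frac12}|=1$.
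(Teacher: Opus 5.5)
Your proposal is correct and follows essentially the same route as the paper: dot the director update with $\bs n^{m+\frac12}$ to get length preservation, then test the momentum equation with $\frac{{\rm Re}\,\tau_m}{1-\gamma}\bs v^{m+\frac12}$ and the director equation with $\tau_m D_{\mathcal F}^O(\bs n)|^{m+\frac12}$. In both, the Ericksen coupling cancels, the antisymmetric $\alpha_2,\alpha_3$ Leslie terms cancel the $\om$ and $\gamma_2$ terms via $\alpha_3-\alpha_2=\gamma_1$ and the Parodi relation, and the symmetric Leslie terms give the dissipation. Your sign bookkeeping for $\nabla\bs v:(\bs n\otimes\bs w)$ and $\nabla\bs v:(\bs w\otimes\bs n)$ checks out, and your remark that only the triple-product identity is used, never $|\bs n^{m+\frac12}|=1$, is exactly what makes the paper's midpoint argument valid.
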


\begin{proof}
We first prove the length-preserving property. Taking the dot product of \eqref{eq: mainscheme-n} with $\bs n^{m+\frac12}$, using the cross-product identities \eqref{id: crossid} and the difference-of-squares identity, we obtain
\begin{equation}
|\bs n^{m+1}|^2-|\bs n^m|^2=0,
\end{equation}
it follows by induction from $|\bs n^0|=1$ that $|\bs n^m|=1$ for all $m\ge 0$.

Next, since $\nabla\cdot \bs v^{m+1}=0$ and $\nabla\cdot \bs v^m=0$, we also have $\nabla\cdot \bs v^{m+\frac12}=0$. Taking the $L^2$ inner product of \eqref{eq: mainscheme-v} with $({\rm Re}\tau_m \bs v^{m+\frac12})/(1-\gamma)$, and using the divergence-free condition together with the boundary conditions, yields
\begin{equation}\label{eq: main-proof-v}
\begin{aligned}
\frac{\rm Re}{2(1-\gamma)}
\Big(
\|\bs v^{m+1}\|^2-\|\bs v^m\|^2
\Big)
=&
-\frac{\gamma \tau_m}{1-\gamma}\|\nabla \bs v^{m+\frac12}\|^2
+\tau_m\Big(\nabla\cdot \widehat{\sig}^{\rm L}\big|^{m+\frac12},\bs v^{m+\frac12}\Big)
\\
&+\tau_m\Big(
\nabla \bs n^{m+\frac12}\cdot
\Big[
\big(
\bs n^{m+\frac12}\times D_{\mathcal F}^O(\bs n)\big|^{m+\frac12}
\big)\times \bs n^{m+\frac12}
\Big],
\bs v^{m+\frac12}
\Big).
\end{aligned}
\end{equation}
Taking the $L^2$ inner product of \eqref{eq: mainscheme-n} with $D_{\mathcal F}^O(\bs n)\big|^{m+\frac12}$, using \eqref{eq: endiff} and identity \eqref{id: crossid2}, we obtain
\begin{equation}\label{eq: main-proof-n}
\begin{aligned}
&\mathcal F[\bs n^{m+1}]-\mathcal F[\bs n^m]
=
-\frac{\tau_m}{\gamma_1}
\Big\|
\bs n^{m+\frac12}\times D_{\mathcal F}^O(\bs n)\big|^{m+\frac12}
\Big\|^2
\\
&\qquad \qquad-\tau_m\Big(
\nabla \bs n^{m+\frac12}\cdot
\Big[
\big(
\bs n^{m+\frac12}\times D_{\mathcal F}^O(\bs n)\big|^{m+\frac12}
\big)\times \bs n^{m+\frac12}
\Big],
\bs v^{m+\frac12}
\Big)
\\
&\qquad \qquad-\tau_m\Big(
\om^{m+\frac12}\cdot \bs n^{m+\frac12}
+\frac{\gamma_2}{\gamma_1}\tt^{m+\frac12}\cdot \bs n^{m+\frac12},
\;
\big(
\bs n^{m+\frac12}\times D_{\mathcal F}^O(\bs n)\big|^{m+\frac12}
\big)\times \bs n^{m+\frac12}
\Big).
\end{aligned}
\end{equation}
The second term on the right-hand side of \eqref{eq: main-proof-n} and the discrete Ericksen-stress coupling term in \eqref{eq: main-proof-v} cancel with each other. It remains to combine the Leslie stress contribution with the last term in \eqref{eq: main-proof-n}.
By integration by parts and the definition \eqref{eq: main-discrete-sigmaL} of the discrete Leslie stress, one finds
\begin{equation*}
\begin{aligned}
&\Big(\nabla\cdot \widehat{\sig}^{\rm L}\big|^{m+\frac12},\bs v^{m+\frac12}\Big)
-\Big(
\om^{m+\frac12}\cdot \bs n^{m+\frac12}
+\frac{\gamma_2}{\gamma_1}\tt^{m+\frac12}\cdot \bs n^{m+\frac12},
\;
\big(
\bs n^{m+\frac12}\times D_{\mathcal F}^O(\bs n)\big|^{m+\frac12}
\big)\times \bs n^{m+\frac12}
\Big)
\\
=&
-\Big(\alpha_1+\frac{\gamma_2^2}{\gamma_1}\Big)
\Big\|
\tt^{m+\frac12}:\bs n^{m+\frac12}\otimes \bs n^{m+\frac12}
\Big\|^2
-\alpha_4\|\tt^{m+\frac12}\|^2-\Big(\alpha_5+\alpha_6-\frac{\gamma_2^2}{\gamma_1}\Big)
\|\tt^{m+\frac12}\cdot \bs n^{m+\frac12}\|^2.
\end{aligned}
\end{equation*}
In the above equation, the second term on the left-hand side is canceled by the asymmetric part (terms with $D_{\mathcal F}^O$) of the Leslie stress, and the remaining symmetric terms give the Leslie dissipation.

Adding \eqref{eq: main-proof-v}, \eqref{eq: main-proof-n}, and the above equation and noting that the constraint on material parameters \eqref{eq: paracons} and $\gamma \in(0,1)$, it finally leads to
\begin{equation*}
\begin{aligned}
&\frac{\rm Re}{2(1-\gamma)}
\Big(
\|\bs v^{m+1}\|^2-\|\bs v^m\|^2
\Big)
+\mathcal F[\bs n^{m+1}]-\mathcal F[\bs n^m]
\\
=&
-\frac{\gamma \tau_m}{1-\gamma}\|\nabla \bs v^{m+\frac12}\|^2
-\frac{\tau_m}{\gamma_1}
\Big\|
\bs n^{m+\frac12}\times D_{\mathcal F}^O(\bs n)\big|^{m+\frac12}
\Big\|^2
\\
&-\Big(\alpha_1+\frac{\gamma_2^2}{\gamma_1}\Big)\tau_m
\Big\|
\tt^{m+\frac12}:\bs n^{m+\frac12}\otimes \bs n^{m+\frac12}
\Big\|^2
-\alpha_4\tau_m\|\tt^{m+\frac12}\|^2
\\
&-\Big(\alpha_5+\alpha_6-\frac{\gamma_2^2}{\gamma_1}\Big)\tau_m
\|\tt^{m+\frac12}\cdot \bs n^{m+\frac12}\|^2\leq 0.
\end{aligned}
\end{equation*}
This completes the proof.
\end{proof}

\begin{rem}
The rotational discrete gradient scheme for the reformulated EL system is posed on a divergence-free velocity space and is specifically designed to preserve the unit-length constraint and the energy-dissipation law. This formulation is fully consistent with the exact divergence-free spectral approximation to be introduced in Appendix~\ref{app: space}.   The Rdg method is not restricted to the divergence-free spectral discretization. In fact, we further provide in Appendix~\ref{app: pro} a projection-type Rdg variant for spatial discretizations that do not impose the divergence-free constraint exactly. The corresponding proofs of length preservation and energy stability are included there as well. Though the numerical experiments in this paper are based on the divergence-free spectral discretization, the Rdg method can be adapted to more general spatial discretizations and computational domains while preserving the unit-vector constraint and retaining energy stability at the discrete level.
\end{rem}

\section{A fully-discrete exact divergence-free spectral scheme}\label{sect: 4}

We now present a fully-discrete rotational discrete gradient spectral method based on the time-discrete scheme in Section~\ref{sect: 3}. The spatial discretization is designed to retain three intrinsic properties of the EL system at the fully discrete level: the exact incompressibility of the velocity field, the nodal unit-length preservation of the director field, and the discrete energy-dissipation law. To this end, we approximate the velocity in an exact divergence-free spectral space introduced in \cite{qin2023exact}, while the director is approximated in a tensor-product Legendre--Gauss--Lobatto (LGL) nodal space.

Let $\Lambda:=(-1,1)$ and let $d\in\{2,3\}$ denote the spatial dimension. Denote by $\mathbb Q_N(\Lambda^d)$ the tensor-product polynomial space of degree at most $N$ in each variable on $\Lambda^d$. Let $\mathcal T:\Lambda^d\to\Omega$ be the standard affine map from the reference cube onto the rectangular physical domain $\Omega$ and define
\begin{equation*}
X_N(\Omega)
=
\{\phi:\Omega\to\mathbb R\big|\ \phi\circ \mathcal T\in \mathbb Q_N(\Lambda^d)\},
\quad
X_{N,0}(\Omega)=X_N(\Omega)\cap H_0^1(\Omega).
\end{equation*}
For later use, we also denote
$
\mathbb D_N=[X_N(\Omega)]^3,
$ and
$
\mathbb D_{N,0}=[X_{N,0}(\Omega)]^3.
$
For nonhomogeneous Dirichlet data $\bs g$, we use the affine space
$
\mathbb D_{N,\bs g}=\mathbf I_N\bs g+\mathbb D_{N,0},
$
where $\mathbf I_N$ denotes the componentwise interpolation operator based on the LGL points. 

Let $\{\hat x_j\}_{j=0}^N$ and $\{\hat w_j\}_{j=0}^N$ be the LGL nodes and weights on $\Lambda$. The physical quadrature nodes and weights on
$\Omega:=[a_1,b_1]\times[a_2,b_2]\times[a_3,b_3]$ are defined by
\begin{equation*}
x_{i;j_i}=\frac{(b_i-a_i)\hat x_{j_i}+a_i+b_i}{2},
\quad
w_{i;j_i}=\frac{b_i-a_i}{2}\hat w_{j_i},
\quad
0\le j_i\le N,\quad i=1,2,3.
\end{equation*}
Let
$
\mathbb G_N
=
\Big\{
(x_{1;j_1},x_{2;j_2},x_{3;j_3})
,\;\ 0\le j_1,j_2,j_3\le N
\Big\}
$
denote the tensor-product LGL grid on $\Omega$, and $
\mathbb G_N^\partial
=
\mathbb G_N\cap \partial\Omega
$
be the set of boundary LGL nodes. Define the discrete inner product
\begin{equation*}
(\bs u,\bs v)_N
=
\sum_{j_1,j_2,j_3=0}^N
w_{1;j_1}w_{2;j_2}w_{3;j_3}\,
\bs u(x_{1;j_1},x_{2;j_2},x_{3;j_3})
\cdot
\bs v(x_{1;j_1},x_{2;j_2},x_{3;j_3}),
\end{equation*}
and the corresponding discrete norm $\|\bs u\|_N=\sqrt{(\bs u,\bs u)_N}$.

For the velocity field, we adopt the exact divergence-free spectral space $\mathbb V_{N,0}$ introduced in \cite{qin2023exact}, such that
$
\mathbb V_{N,0} \subset \bs H_0^1(\mathrm{div}0;\Omega)
=
\{\bs u\in \bs H_0^1(\Omega):\nabla\cdot \bs u=0\}.
$
Since the divergence-free constraint is built into the approximation space, the pressure is eliminated from the fully discrete weak formulation. For brevity and to keep the presentation focused, we defer the explicit formulas for the divergence-free basis functions to Appendix~\ref{app: space}. For $\bs v$ with nonhomogeneous Dirichlet boundary, i.e. $ \bs v(t,\bs x)\big|_{\partial\Omega}=\bs g(\bs x)$, we use the affine space $\mathbb V_{N,\bs g}=\mathbf I_N\bs g+\mathbb V_{N,0}$ as the approximation space.

The convective term may be written in the standard form
$
(\bs v^{m+\frac12}\cdot \nabla \bs v^{m+\frac12},\bs \phi)
$ at the time-discrete level, as the corresponding energy cancellation follows from incompressibility and integration by parts. However, in order to control quadrature aliasing error and to recover an exact discrete kinetic-energy cancellation relation at the fully discrete level, it is crucial to approximate the convective term by the skew-symmetric trilinear form $b_N$ defined in Lemma \ref{lem: skew-convection-energy}.

\begin{lemma}
\label{lem: skew-convection-energy}
Define \begin{equation}\label{eq: discrete-convection}
b_N(\bs u,\bs v,\bs w)
=
\frac12\big(
(\bs u\cdot \nabla \bs v,\bs w)_N
-
(\bs u\cdot \nabla \bs w,\bs v)_N
\big),
\end{equation}
then it holds that  for any sequence \(\{\bs v^m\}_{m\ge 0}\subset \mathbb V_N\),
\begin{equation}\label{eq: midpoint-kinetic}
\Big(
\frac{\bs v^{m+1}-\bs v^m}{\tau_m},
\bs v^{m+\frac12}
\Big)_N
+
b_N\!\big(\bs v^{m+\frac12},\bs v^{m+\frac12},\bs v^{m+\frac12}\big)
=
\frac{1}{2\tau_m}
\Big(
\|\bs v^{m+1}\|_N^2-\|\bs v^m\|_N^2
\Big).
\end{equation}
\end{lemma}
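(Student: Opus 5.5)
The identity splits into two independent pieces: an algebraic midpoint identity for the time difference, and the vanishing of the skew-symmetrized convection form when its last two arguments coincide. Neither piece needs incompressibility, boundary conditions or exactness of the LGL quadrature; both rest only on the bilinearity and symmetry of $(\cdot,\cdot)_N$ and on the antisymmetrization built into \eqref{eq: discrete-convection}.

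\emph{Step 1 (time difference).} Since $(\cdot,\cdot)_N$ is a symmetric bilinear form on grid functions, I will expand with $\bs v^{m+\frac12}=(\bs v^{m+1}+\bs v^m)/2$:
\begin{equation*}
\Big(\bs v^{m+1}-\bs v^m,\tfrac12(\bs v^{m+1}+\bs v^m)\Big)_N
=\tfrac12\Big(\|\bs v^{m+1}\|_N^2-\|\bs v^m\|_N^2\Big).
\end{equation*}
The cross terms $(\bs v^{m+1},\bs v^m)_N$ and $(\bs v^m,\bs v^{m+1})_N$ cancel by symmetry of the discrete inner product. Dividing by $\tau_m$ gives the right-hand side of \eqref{eq: midpoint-kinetic}.

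\emph{Step 2 (convection).} For any $\bs u,\bs w\in\mathbb V_N$, definition \eqref{eq: discrete-convection} gives
\begin{equation*}
b_N(\bs u,\bs w,\bs w)=\tfrac12\big((\bs u\cdot\nabla\bs w,\bs w)_N-(\bs u\cdot\nabla\bs w,\bs w)_N\big)=0 .
\end{equation*}
The two terms are literally identical once the second and third arguments agree, because the nodal values of $\bs u\cdot\nabla\bs w$ are well defined for polynomial $\bs u,\bs w$. Taking $\bs u=\bs w=\bs v^{m+\frac12}$, the convection term drops out. Combining this with Step 1 proves \eqref{eq: midpoint-kinetic}.

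The only point that needs care is conceptual rather than technical. The standard form $(\bs v\cdot\nabla\bs v,\bs v)_N$ would not vanish, because integration by parts fails under quadrature through aliasing, and $\nabla\cdot\bs v=0$ holds only in the continuous sense. The skew-symmetric form avoids both issues, so the lemma holds for arbitrary grid-function sequences in $\mathbb V_N$.
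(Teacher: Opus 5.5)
Your proof is correct and follows the paper's argument: the midpoint identity comes from the symmetry of $(\cdot,\cdot)_N$, and $b_N(\bs u,\bs w,\bs w)=0$ follows directly from the antisymmetrized definition. One small correction to your closing side remark: in the exact divergence-free space $\nabla\cdot\bs v^{m+\frac12}=0$ holds pointwise, so the standard form fails only because discrete integration by parts fails, since the LGL rule is not exact for the cubic integrand.
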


\begin{proof}
By definition of $b_N(\bs u,\bs v,\bs w)$ in \eqref{eq: discrete-convection}, one can interchange $\bs v$ and $\bs w$ to obtain the property of skew-symmetry with respect to its last two arguments of $b_N$, namely,
\begin{equation}\label{eq: skew-last-two}
b_N(\bs u,\bs v,\bs w)=-\,b_N(\bs u,\bs w,\bs v).
\end{equation}
Taking $\bs w=\bs v$ in the above equation further gives the identity $b_N(\bs u,\bs v,\bs v)=0$. 
Together with the fact that 
$$
\Big(
\bs v^{m+1}-\bs v^m,\,
\bs v^{m+\frac12}
\Big)_N
=
\frac12
\Big(
\|\bs v^{m+1}\|_N^2-\|\bs v^m\|_N^2
\Big),
$$
we finally arrive at \eqref{eq: midpoint-kinetic}.
\end{proof}
\begin{rem}
The nonlinear convection term contributes exactly zero to the discrete kinetic-energy balance. It is worthwhile to point out that this cancellation is purely algebraic and does not rely on the exactness of the quadrature rule. Hence the aliasing error caused by the quadrature rule  is bypassed with the help of this skew-symmetric formulation of the convection term.
\end{rem}

\subsection{Fully-discrete rotational discrete gradient spectral scheme}

\vspace{5pt}
\paragraph{\textbf{Fully-discrete scheme}.}\label{sch: fully1}
For each $m\ge 0$, given
$
(\bs v^m,\bs n^m)\in \mathbb V_{N,\bs g_1} \times \mathbb D_{N,\bs g_2},
$
find
$
(\bs v^{m+1},\bs n^{m+1},\bs\mu_0^{m+\frac12})
\in
\mathbb V_{N,\bs g_1}\times \mathbb D_{N,\bs g_2}\times \mathbb D_{N,0}
$
such that
\begin{subequations}\label{weakformdiv0}
\begin{align}
\begin{split}
&\frac{1}{\tau_m}\big(\bs v^{m+1}-\bs v^m,\bs \phi\big)_N
=
-b_N\!\big(\bs v^{m+\frac12},\bs v^{m+\frac12},\bs \phi\big)
-\frac{\gamma}{\rm Re}\big(\nabla \bs v^{m+\frac12}, \nabla \bs \phi\big)_N
\\
&-\frac{1-\gamma}{\rm Re}\big( \widehat{\sig}^{\mathrm{L}}_0\big|^{m+\frac12},\nabla \bs \phi\big)_N
-\frac{1-\gamma}{\rm Re}\big( \nabla \bs n^{m+\frac12} \cdot [\bs n^{m+\frac12} \times \bs \mu_0^{m+\frac12}] \times \bs n^{m+\frac12}, \bs \phi\big)_N, \;\;\; \forall \bs \phi\in \mathbb V_{N,0},
\end{split}\label{weakvdiv0}\\[5pt]
\begin{split}
& \big(\bs n^{m+1}-\bs n^m,\bs \psi\big)_N
=
\frac{\tau_m}{\gamma_1}
\Big(
\Big\{
\bs n^{m+\frac12}\times
\big[
\bs \mu_0^{m+\frac12}
-\gamma_1(\bs v^{m+\frac12}\cdot \nabla \bs n^{m+\frac12}
+\bs \Omega^{m+\frac12}\cdot \bs n^{m+\frac12})
\\
&
\qquad \qquad \qquad \qquad \qquad \qquad   -\gamma_2 \tt^{m+\frac12}\cdot \bs n^{m+\frac12}
\big]
\Big\}
\times \bs n^{m+\frac12},
\bs \psi
\Big)_N,
\;\;\;
\forall \bs \psi\in \mathbb D_{N,0},
\end{split}\label{weakndiv0}
\end{align}
\end{subequations}
where $\bs\mu_0^{m+\frac12}\in \mathbb D_{N,0}$ is introduced as the constrained weak representative of the Oseen--Frank discrete gradient, and is determined by
\begin{equation}\label{weakom}
\begin{aligned}
 \big(-\bs\mu_0^{m+\frac12},&\bs\theta\big)_N
=
\kappa_1\big(\nabla\cdot \bs n^{m+\frac12},\nabla\cdot \bs\theta\big)_N\\
&+
\kappa_2\big(\beta^{m+\frac12}\nabla\times \bs n^{m+\frac12},\bs\theta\big)_N
+
\kappa_2\big(\beta^{m+\frac12}\bs n^{m+\frac12},\nabla\times \bs\theta\big)_N
\\
&+
\kappa_3\big((\nabla\times \bs n^{m+\frac12})\times
\bs\omega^{m+\frac12}    ,\bs\theta\big)_N+
\kappa_3\big(\bs\omega^{m+\frac12}\times \bs n^{m+\frac12},\nabla\times \bs\theta\big)_N,
\;\;\; \forall\,\bs\theta\in \mathbb D_{N,0}.
\end{aligned}
\end{equation}
In the above, the discrete Leslie stress
$\widehat{\sig}^{\mathrm{L}}_0\big|^{m+\frac12}$ is obtained from
\eqref{eq: main-discrete-sigmaL} by replacing
$D_{\mathcal F}^O(\bs n)\big|^{m+\frac12}$ with
$-\bs\mu_0^{m+\frac12}$.
 
\begin{lemma}\label{lemma: constrained-dg-identity}
Denote the discrete Oseen-Frank energy at time step $k$ by
\begin{equation}\label{eq: discrete-OF-energy}
\mathcal F_N[\bs n^k]=
\frac12\big(
\kappa_1\|\nabla\cdot \bs n^k\|_N^2
+\kappa_2\|\bs n^k\cdot \nabla\times \bs n^k\|_N^2
+\kappa_3\|\bs n^k\times \nabla\times \bs n^k\|_N^2
\big),\;\;\; k=m,\, m+1,
\end{equation}
and assume that $\bs g_2$ is independent of time and
$\bs n^m,\bs n^{m+1}\in \mathbb D_{N,\bs g_2}$.
Then for $\bs \mu_0^{m+\frac12}\in \mathbb D_{N,0}
$ defined in equation \eqref{weakom}, it satisfies the discrete energy difference relation:
\begin{equation}\label{eq: constrained-dg-identity}
\big(-\bs \mu_0^{m+\frac12},\bs n^{m+1}-\bs n^m\big)_N
=
\mathcal F_N[\bs n^{m+1}]
-
\mathcal F_N[\bs n^m].
\end{equation}
\end{lemma}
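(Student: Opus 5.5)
The plan is to test \eqref{weakom} with the admissible function $\bs\theta=\bs n^{m+1}-\bs n^m$. Since $\bs g_2$ is independent of time and both $\bs n^m,\bs n^{m+1}\in\mathbb D_{N,\bs g_2}=\mathbf I_N\bs g_2+\mathbb D_{N,0}$, the difference lies in $\mathbb D_{N,0}$. The left-hand side of \eqref{weakom} then becomes exactly $(-\bs\mu_0^{m+\frac12},\bs n^{m+1}-\bs n^m)_N$. It remains to show that the five terms on the right sum to $\mathcal F_N[\bs n^{m+1}]-\mathcal F_N[\bs n^m]$. The key point is that every step must be a pointwise algebraic identity at the LGL nodes, weighted by the quadrature weights. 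We must not use continuous integration by parts, since the discrete inner product is not exact for these nonlinear integrands.

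\textbf{Splay term.} Write $\delta\bs n=\bs n^{m+1}-\bs n^m$. Since $\nabla\cdot$ is linear,
\[
\big(\nabla\cdot\bs n^{m+\frac12},\nabla\cdot\delta\bs n\big)_N=\tfrac12\big(\|\nabla\cdot\bs n^{m+1}\|_N^2-\|\nabla\cdot\bs n^m\|_N^2\big).
\]
This is the difference-of-squares identity, applied nodewise.

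\textbf{Twist term.} At each node, set $\bs a=\bs n^{m+\frac12}$, $\bs b=\nabla\times\bs n^{m+\frac12}$, $\delta\bs a=\delta\bs n$ and $\delta\bs b=\nabla\times\delta\bs n$. The discrete product rule gives
\[
\beta^{m+1}-\beta^m=\bs a\cdot\delta\bs b+\delta\bs a\cdot\bs b,
\]
which holds exactly because $\bs n^k=\bs a\pm\frac12\delta\bs a$, so the quadratic cross terms cancel. Hence
\[
\beta^{m+\frac12}\big(\nabla\times\bs n^{m+\frac12}\cdot\delta\bs n+\bs n^{m+\frac12}\cdot\nabla\times\delta\bs n\big)=\beta^{m+\frac12}(\beta^{m+1}-\beta^m)=\tfrac12\big((\beta^{m+1})^2-(\beta^m)^2\big)
\]
pointwise. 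Summing with the weights yields the $\kappa_2$ part of the energy difference.

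\textbf{Bend term.} The same device applies to $\bs\omega^{m+1}-\bs\omega^m=\bs a\times\delta\bs b+\delta\bs a\times\bs b$. Using the cyclic identity \eqref{id: crossid2}, we get
\[
\bs\omega^{m+\frac12}\cdot(\delta\bs a\times\bs b)=\big(\bs b\times\bs\omega^{m+\frac12}\big)\cdot\delta\bs a,
\]
which matches the term $\big((\nabla\times\bs n^{m+\frac12})\times\bs\omega^{m+\frac12},\delta\bs n\big)_N$. Similarly,
\[
\bs\omega^{m+\frac12}\cdot(\bs a\times\delta\bs b)=\big(\bs\omega^{m+\frac12}\times\bs a\big)\cdot\delta\bs b,
\]
which matches $\big(\bs\omega^{m+\frac12}\times\bs n^{m+\frac12},\nabla\times\delta\bs n\big)_N$. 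Adding the two gives $\bs\omega^{m+\frac12}\cdot(\bs\omega^{m+1}-\bs\omega^m)=\frac12\big(|\bs\omega^{m+1}|^2-|\bs\omega^m|^2\big)$ at each node.

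Summing the three contributions with the factors $\kappa_i$ produces \eqref{eq: constrained-dg-identity}. I expect the main care point to be the bend term, where the signs and orientation of the triple products must match the ordering in \eqref{weakom}. The other subtlety is the exactness of the midpoint product rule, which holds because $\beta$ and $\bs\omega$ are bilinear in $(\bs n,\nabla\times\bs n)$. Since $\nabla\times$ of a polynomial field is evaluated exactly at the nodes, no quadrature error enters.
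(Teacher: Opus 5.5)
Your proposal is correct and follows the paper's proof essentially step for step. You test \eqref{weakom} with $\bs\theta=\bs n^{m+1}-\bs n^m\in\mathbb D_{N,0}$, use the exact midpoint product rule for $\bs\omega^{m+1}-\bs\omega^m$ together with the cyclic identity \eqref{id: crossid2} for the bend terms, and reduce everything to nodewise differences of squares; your explicit treatment of the twist and splay terms, which the paper only calls analogous, correctly fills in those details.
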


\begin{proof}
Since $\bs g_2$ is time-independent and $\bs n^m,\bs n^{m+1}\in \mathbb D_{N,\bs g_2}$, we can take $\bs\theta=\bs n^{m+1}-\bs n^m \in \mathbb D_{N,0}$  in \eqref{weakom}. 
We detail the manipulation for the $\kappa_3$ term (bend) as a representative case; the $\kappa_1$ term (splay) and $\kappa_2$ term (twist) follow by analogous computations.
\begin{equation*}
\begin{aligned}
\bs\omega^{m+1}-\bs\omega^m
=
(\bs n^{m+1}-\bs n^m)\times \nabla\times \bs n^{m+\frac12}
+
\bs n^{m+\frac12}\times \nabla\times (\bs n^{m+1}-\bs n^m).
\end{aligned}
\end{equation*}
Then by the above equation and the cyclic identity~\eqref{id: crossid2}, one can verify that
\begin{equation}\label{eq: bend}
\begin{aligned}
&\big((\nabla\times \bs n^{m+\frac12})\times
\bs\omega^{m+\frac12},\bs n^{m+1}-\bs n^m\big)_N
+
\big(\bs\omega^{m+\frac12}\times \bs n^{m+\frac12},\nabla\times (\bs n^{m+1}-\bs n^m)\big)_N
\\
&=
\big(\bs\omega^{m+\frac12},
(\bs n^{m+1}-\bs n^m)\times \nabla\times \bs n^{m+\frac12}
+
\bs n^{m+\frac12}\times \nabla\times (\bs n^{m+1}-\bs n^m)
\big)_N
\\
&=
\big(\bs\omega^{m+\frac12},\bs\omega^{m+1}-\bs\omega^m\big)_N
=
\frac12\Big(
\|\bs\omega^{m+1}\|_N^2-\|\bs\omega^m\|_N^2
\Big).
\end{aligned}
\end{equation}
Similarly, the $\kappa_1$ and $\kappa_2$ terms are  
\begin{equation}\label{eq: splay-twist}
\frac12\Big(
\|\nabla\cdot\bs n^{m+1}\|_N^2-\|\nabla\cdot\bs n^m\|_N^2
\Big),\quad
\frac12\Big(
\|\beta^{m+1}\|_N^2-\|\beta^m\|_N^2
\Big).
\end{equation}
Combining $\kappa_1$, $\kappa_2$, $\kappa_3$ terms \eqref{eq: bend}, \eqref{eq: splay-twist}, we obtain \eqref{eq: constrained-dg-identity}.
\end{proof}

\subsection{Structure-preserving properties}

We now establish the preservation of the unit-length constraint at nodal points and the discrete energy law for the fully-discrete scheme. The preservation of the unit-length constraint relies on the diagonal structure of the discrete mass matrix on the LGL nodal basis. For later use, we record this property in the following lemma.

\begin{lemma}\label{lem: diagonal-mass}
Let $\{\ell_{j_i}\}_{j_i=0}^N$ be the one-dimensional LGL cardinal basis on the $i$th coordinate direction, and define the tensor-product nodal basis on $\Omega$ by
\begin{equation}\label{eq: tpnb}
L_{j_1j_2j_3}(x_1,x_2,x_3)
=
\ell_{j_1}(x_1)\ell_{j_2}(x_2)\ell_{j_3}(x_3),
\;\; 0\le j_1,j_2,j_3\le N.
\end{equation}
Assume that the discrete inner product $(\cdot,\cdot)_N$ is defined by the tensor-product LGL quadrature rule on the same set of nodes. Then the corresponding discrete mass matrix is diagonal, namely
\begin{equation}\label{eq: diagonal-mass}
\big(L_{j_1j_2j_3},L_{k_1k_2k_3}\big)_N
=
\delta_{j_1k_1}\delta_{j_2k_2}\delta_{j_3k_3}\,
w_{1;j_1}w_{2;j_2}w_{3;j_3}.
\end{equation}
Consequently, for any vector-valued nodal polynomial $\bs u_N\in \mathbb D_N$ and any fixed vector $\bs a\in \mathbb R^3$, one has
\begin{equation}\label{eq: nodal-extraction}
\big(\bs u_N,\,L_{j_1j_2j_3}\bs a\big)_N
=
w_{1;j_1}w_{2;j_2}w_{3;j_3}\,
\bs u_N(x_{1;j_1},x_{2;j_2},x_{3;j_3})\cdot \bs a.
\end{equation}
\end{lemma}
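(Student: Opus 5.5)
The argument rests on the cardinal (Lagrange) property of the one-dimensional LGL basis, $\ell_{j}(x_{i;k})=\delta_{jk}$, together with the fact that the discrete inner product $(\cdot,\cdot)_N$ only samples functions at the nodes of $\mathbb G_N$. No exactness of the quadrature rule is used: the quadrature is not exact for products of degree $2N$, but that is irrelevant here, because the discrete mass matrix is defined through $(\cdot,\cdot)_N$ itself.

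First I would verify \eqref{eq: diagonal-mass}. Inserting the tensor-product basis \eqref{eq: tpnb} into the definition of $(\cdot,\cdot)_N$, the triple sum over $(p_1,p_2,p_3)$ factorizes into three one-dimensional sums of the form
\[
\sum_{p_i=0}^N w_{i;p_i}\,\ell_{j_i}(x_{i;p_i})\,\ell_{k_i}(x_{i;p_i}).
\]
The physical nodes $x_{i;p_i}$ are affine images of the reference LGL nodes, and $\ell_{j_i}$ is the cardinal basis on these physical nodes. Hence $\ell_{j_i}(x_{i;p_i})=\delta_{j_ip_i}$, and each factor collapses to $\delta_{j_ik_i}w_{i;j_i}$. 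Taking the product of the three factors gives \eqref{eq: diagonal-mass}. For $d=2$ one simply drops the third factor.

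Next, for \eqref{eq: nodal-extraction}, I would not expand $\bs u_N$ in the basis. Instead I would evaluate $(\bs u_N,L_{j_1j_2j_3}\bs a)_N$ directly from its definition. At each quadrature node $(x_{1;p_1},x_{2;p_2},x_{3;p_3})$, the factor $L_{j_1j_2j_3}$ equals $\delta_{j_1p_1}\delta_{j_2p_2}\delta_{j_3p_3}$, so only the single term $p_i=j_i$ survives. That term is $w_{1;j_1}w_{2;j_2}w_{3;j_3}\,\bs u_N(x_{1;j_1},x_{2;j_2},x_{3;j_3})\cdot\bs a$. Alternatively, one can expand $\bs u_N=\sum \bs u_N(\text{node})L_{k_1k_2k_3}$ and use \eqref{eq: diagonal-mass}; the two routes give the same result.

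There is no real obstacle in this proof. The only point needing care is to confirm that the nodal basis and the quadrature rule are built on the same affinely mapped LGL grid $\mathbb G_N$, so that the cardinality relation holds at exactly the points that $(\cdot,\cdot)_N$ samples.
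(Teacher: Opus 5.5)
Your proposal is correct and follows the paper's proof: both substitute the cardinal property $\ell_{j_i}(x_{i;k_i})=\delta_{j_ik_i}$ into the tensor-product quadrature sum to obtain the diagonal mass matrix. For the extraction identity, the paper expands $\bs u_N$ in the nodal basis and applies the diagonal relation componentwise, which is your stated alternative; your direct evaluation is an equally valid and slightly more general variant, since it uses only the nodal values of $\bs u_N$.
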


\begin{proof}
Since $\ell_{j_i}(x_{i;k_i})=\delta_{j_i k_i}$ for each coordinate direction, we have
\[
L_{j_1j_2j_3}(x_{1;k_1},x_{2;k_2},x_{3;k_3})
=
\delta_{j_1k_1}\delta_{j_2k_2}\delta_{j_3k_3}.
\]
Substituting this into the definition of $(\cdot,\cdot)_N$ gives
\[
\big(L_{j_1j_2j_3},L_{k_1k_2k_3}\big)_N
=
\sum_{m_1,m_2,m_3=0}^N
w_{1;m_1}w_{2;m_2}w_{3;m_3}\,
\delta_{j_1m_1}\delta_{j_2m_2}\delta_{j_3m_3}\,
\delta_{k_1m_1}\delta_{k_2m_2}\delta_{k_3m_3},
\]
which immediately yields \eqref{eq: diagonal-mass}. Then \eqref{eq: nodal-extraction} follows by expanding $\bs u_N$ in the nodal basis and applying \eqref{eq: diagonal-mass} componentwise.
\end{proof}

We present the properties of the fully-discrete schemes in Theorem \ref{thm: fully-discrete-structure}.

\begin{thm}\label{thm: fully-discrete-structure}
Assume that the material coefficients satisfy \eqref{eq: paracons} and $\gamma\in(0,1)$. Assume further that the director field is approximated by the tensor-product LGL nodal basis $L_{j_1j_2j_3}(x_1,x_2,x_3)$ in \eqref{eq: tpnb} associated with the same tensor-product LGL quadrature points $\mathbb G_N$ for $(\cdot,\cdot)_N$, and that the initial and boundary data satisfy
\begin{equation}\label{eq: fullyini}
\begin{aligned}
& |\bs n^0(x_{1;j_1},x_{2;j_2},x_{3;j_3})|=1,
\quad \forall (x_{1;j_1},x_{2;j_2},x_{3;j_3})\in \mathbb G_N,\\
& |\mathbf I_N\bs g_2(x_{1;j_1},x_{2;j_2},x_{3;j_3})|=1,
\quad \forall (x_{1;j_1},x_{2;j_2},x_{3;j_3}) \in \mathbb G_N^\partial,
\end{aligned}
\end{equation}
where $\bs g_2$ is independent of time. Then the fully-discrete scheme \eqref{weakformdiv0}--\eqref{weakom} preserves the unit-length constraint at the LGL nodes, namely
\begin{equation}\label{eq: nodal-length}
|\bs n^m(x_{1;j_1},x_{2;j_2},x_{3;j_3})|=1,
\quad \forall (x_{1;j_1},x_{2;j_2},x_{3;j_3})\in \mathbb G_N,\quad m\ge 0.
\end{equation}
Assume further that the velocity field is under homogeneous Dirichlet boundary condition, i.e. $\bs g_1=\bs 0$, it also satisfies the discrete energy law
\begin{equation}\label{eq: fullyenergydissp}
\frac{\rm Re}{2(1-\gamma)}\|\bs v^{m+1}\|_N^2+\mathcal F_N[\bs n^{m+1}]
\le
\frac{\rm Re}{2(1-\gamma)}\|\bs v^m\|_N^2+\mathcal F_N[\bs n^m],
\quad \forall m\ge 0.
\end{equation}
\end{thm}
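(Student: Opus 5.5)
The plan has two parts. First, I would obtain nodal unit length from the diagonal mass matrix, Lemma~\ref{lem: diagonal-mass}. Second, I would obtain the energy law by mimicking the proof of Theorem~\ref{thm: main-scheme}, with exact integrals replaced by the quadrature inner product $(\cdot,\cdot)_N$.

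\textbf{Length preservation.} I argue by induction on $m$.

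Interior nodes. Let $(x_{1;j_1},x_{2;j_2},x_{3;j_3})\in\mathbb G_N\setminus\mathbb G_N^\partial$. Then $L_{j_1j_2j_3}\bs a\in\mathbb D_{N,0}$ for every $\bs a\in\mathbb R^3$, so it is an admissible test function $\bs\psi$ in \eqref{weakndiv0}. By \eqref{eq: nodal-extraction}, both sides of \eqref{weakndiv0} reduce to nodal values times the common positive weight $w_{1;j_1}w_{2;j_2}w_{3;j_3}$.
\begin{itemize}
\item The left side becomes $(\bs n^{m+1}-\bs n^m)(\bs x_j)\cdot\bs a$.
\item The right side involves the product $\{\bs n^{m+\frac12}\times[\cdots]\}\times\bs n^{m+\frac12}$. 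This is not a polynomial in $\mathbb D_N$, but $(\cdot,\cdot)_N$ only samples at nodes, so the extraction still holds verbatim.
\end{itemize}
Since $\bs a$ is arbitrary, we get the pointwise identity
\[
\bs n^{m+1}(\bs x_j)-\bs n^m(\bs x_j)=\frac{\tau_m}{\gamma_1}\big(\bs n^{m+\frac12}(\bs x_j)\times\bs b\big)\times\bs n^{m+\frac12}(\bs x_j)
\]
for some vector $\bs b$. Dotting with $\bs n^{m+\frac12}(\bs x_j)$ kills the right side, and the difference of squares then gives $|\bs n^{m+1}(\bs x_j)|=|\bs n^m(\bs x_j)|$.

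Boundary nodes. There $\bs n^{m+1}=\mathbf I_N\bs g_2$, which has unit length by \eqref{eq: fullyini}.

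\textbf{Energy law.} I would carry out three steps.
\begin{enumerate}
\item Take $\bs\phi=\bs v^{m+\frac12}\in\mathbb V_{N,0}$ in \eqref{weakvdiv0}; this is allowed because $\bs g_1=\bs 0$. Multiply by $\mathrm{Re}\,\tau_m/(1-\gamma)$ and apply Lemma~\ref{lem: skew-convection-energy}. The convection term drops out, and the kinetic-energy difference equals
\begin{itemize}
\item the viscous term $-\frac{\gamma\tau_m}{1-\gamma}\|\nabla\bs v^{m+\frac12}\|_N^2$,
\item minus $\tau_m(\widehat{\sig}^{\rm L}_0|^{m+\frac12},\nabla\bs v^{m+\frac12})_N$,
\item minus the Ericksen coupling term.
\end{itemize}
\item The director equation cannot be tested with $-\bs\mu_0^{m+\frac12}$ directly, so I would avoid needing a test function in \eqref{weakndiv0} altogether. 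Instead, I use the nodal identity from the length step at every node, with $\bs\mu_0^{m+\frac12}$ fixed. This is legitimate because $\bs\mu_0^{m+\frac12}\in\mathbb D_{N,0}$ vanishes at boundary nodes, where $\bs n^{m+1}-\bs n^m=0$ as well. Hence $(-\bs\mu_0^{m+\frac12},\bs n^{m+1}-\bs n^m)_N$ can be computed as a weighted nodal sum of the right-hand side of \eqref{weakndiv0}. By Lemma~\ref{lemma: constrained-dg-identity}, this pairing equals $\mathcal F_N[\bs n^{m+1}]-\mathcal F_N[\bs n^m]$. Using \eqref{id: crossid2} nodewise, the sum splits into
\begin{itemize}
\item $-\frac{\tau_m}{\gamma_1}\|\bs n^{m+\frac12}\times\bs\mu_0^{m+\frac12}\|_N^2$,
\item a convective coupling term $(\nabla\bs n^{m+\frac12}\cdot[(\bs n\times\bs\mu_0)\times\bs n],\bs v^{m+\frac12})_N$,
\item the $\bs\Omega,\tt$ coupling terms.
\end{itemize}
The convective coupling term cancels the Ericksen term from step 1 with the correct sign. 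It is worth double-checking this sign against the convention $D^O_{\mathcal F}\to-\bs\mu_0$.
\item Treat the Leslie stress pairing $(\widehat{\sig}^{\rm L}_0,\nabla\bs v)_N$ entirely nodewise. Split $\nabla\bs v=\tt+\bs\Omega$ (with the index convention of the paper) and use $|\bs n^{m+\frac12}|\le 1$ only implicitly, i.e.\ via purely algebraic identities.
\begin{itemize}
\item The $\alpha_2,\alpha_3$ terms give $\frac{1}{\gamma_1}\big(\alpha_2(\bs n\otimes\bs w)+\alpha_3(\bs w\otimes\bs n)\big):(\tt+\bs\Omega)$, with $\bs w=(\bs n\times\bs\mu_0)\times\bs n$. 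Using $\alpha_3-\alpha_2=\gamma_1$ and $\alpha_2+\alpha_3=\gamma_2$ (Parodi), these reproduce exactly the negatives of the $\bs\Omega\cdot\bs n$ and $\frac{\gamma_2}{\gamma_1}\tt\cdot\bs n$ couplings from step 2. Here I need $\bs w\cdot\bs\Omega\cdot\bs n=\bs w\cdot(\bs n\times(\bs\Omega\cdot\bs n))\times\bs n$, which holds because $\bs w\perp\bs n$.
\item The symmetric terms give nonnegative quadratic forms pointwise: $(\bs n\otimes\bs n:\tt)^2$, $\tt:\tt$ and $|\tt\cdot\bs n|^2$, with coefficients that are nonnegative by \eqref{eq: paracons}.
\end{itemize}
\end{enumerate}

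Summing the three identities yields \eqref{eq: fullyenergydissp}. I expect the main obstacle to be the bookkeeping of steps 2--3. First, the midpoint $\bs n^{m+\frac12}$ does not have unit length, so every cancellation must be purely algebraic. Second, each cancellation must hold node by node under $(\cdot,\cdot)_N$, with no integration by parts; the only integration by parts is hidden in the weak form \eqref{weakvdiv0}, which is already stated in terms of $\nabla\bs\phi$.
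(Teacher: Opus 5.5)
Your route is the paper's route. Length follows from nodal extraction via the diagonal LGL mass matrix. The energy law follows from testing \eqref{weakvdiv0} with $\bs v^{m+\frac12}$, pairing the director update with $\bs\mu_0^{m+\frac12}$, applying Lemma~\ref{lemma: constrained-dg-identity}, and cancelling the Ericksen and antisymmetric Leslie couplings with \eqref{id: crossid2}, $\alpha_3-\alpha_2=\gamma_1$ and $\alpha_2+\alpha_3=\gamma_2$. Two points need correcting.

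\textbf{The identity in step 3 is false.} You say you need
\[
\bs w\cdot\bs\Omega\cdot\bs n=\bs w\cdot\big((\bs n\times(\bs\Omega\cdot\bs n))\times\bs n\big),
\]
and justify it by $\bs w\perp\bs n$. Expanding with \eqref{id: crossid1}, the right side equals $|\bs n^{m+\frac12}|^2\,\bs w\cdot\bs\Omega\cdot\bs n$. At a node, $|\bs n^{m+\frac12}|^2=\tfrac12(1+\bs n^{m+1}\cdot\bs n^m)<1$ unless $\bs n^{m+1}=\bs n^m$. So the step fails exactly where the non-unit midpoint matters, which is the danger you yourself flagged.

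What is actually needed is the following, and it holds for every $\bs n$:
\[
\big((\bs n\times\bs a)\times\bs n\big)\cdot\bs\mu_0=(\bs n\times\bs a)\cdot(\bs n\times\bs\mu_0)=\bs a\cdot\bs w,
\qquad \bs w=(\bs n\times\bs\mu_0)\times\bs n .
\]
This follows from \eqref{id: crossid2} together with the symmetry of $(\bs a,\bs b)\mapsto(\bs n\times\bs a)\cdot(\bs n\times\bs b)$. Apply it in step 2 with $\bs a=\bs\Omega\cdot\bs n$, $\bs a=\tt\cdot\bs n$ and $\bs a=\bs v\cdot\nabla\bs n$. The director couplings then become $(\bs\Omega\cdot\bs n,\bs w)_N$, $(\tt\cdot\bs n,\bs w)_N$ and $(\nabla\bs n\cdot\bs w,\bs v)_N$. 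These cancel directly against the Leslie pairing $\frac{1}{\gamma_1}\big(\alpha_2\bs n\otimes\bs w+\alpha_3\bs w\otimes\bs n\big):\nabla\bs v=\bs w\cdot\bs\Omega\cdot\bs n+\frac{\gamma_2}{\gamma_1}\bs w\cdot\tt\cdot\bs n$ and against the Ericksen term, with no further identity. This is how the paper proceeds. The symmetric Leslie terms, as you say, need no unit length.

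\textbf{Testing with $\bs\mu_0^{m+\frac12}$ is allowed.} Your claim that \eqref{weakndiv0} cannot be tested with $\bs\mu_0^{m+\frac12}$ is incorrect. Since $\bs\mu_0^{m+\frac12}\in\mathbb D_{N,0}$, it is an admissible test function, and the paper takes $\bs\psi=\bs\mu_0^{m+\frac12}$ directly. Your weighted sum of nodal identities is the same computation by linearity, so it is harmless but unnecessary.
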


\begin{proof}
For an interior LGL node
$
\bs x_{j_1j_2j_3}=(x_{1;j_1},x_{2;j_2},x_{3;j_3})
\in \mathbb G_N^{\rm in}:=\mathbb G_N\setminus \mathbb G_N^\partial,
$
take $\bs\psi=L_{j_1j_2j_3}\bs e_r$ in \eqref{weakndiv0}, where $\bs e_r$ is the $r$th canonical basis vector of $\mathbb R^3$. By Lemma~\ref{lem: diagonal-mass}, we obtain the nodal relation
$$
\bs n^{m+1}(\bs x_{j_1j_2j_3})-\bs n^m(\bs x_{j_1j_2j_3})
=
\frac{\tau_m}{\gamma_1}\,
\bs q^{m+\frac12}(\bs x_{j_1j_2j_3}),
\quad \bs x_{j_1j_2j_3}\in \mathbb G_N^{\rm in},
$$
where
$$
\bs q^{m+\frac12}
=
\Big\{
\bs n^{m+\frac12}\times
\Big[
\bs \mu_0^{m+\frac12}
-\gamma_1\big(\bs v^{m+\frac12}\cdot \nabla \bs n^{m+\frac12}
+\bs\Omega^{m+\frac12}\cdot \bs n^{m+\frac12}\big)
-\gamma_2\tt^{m+\frac12}\cdot \bs n^{m+\frac12}
\Big]
\Big\}\times \bs n^{m+\frac12}.
$$
Since $\bs q^{m+\frac12}\cdot \bs n^{m+\frac12}=0$ pointwise, it follows that
$$
|\bs n^{m+1}(\bs x_{j_1j_2j_3})|^2-|\bs n^m(\bs x_{j_1j_2j_3})|^2=0,
\quad \forall\, \bs x_{j_1j_2j_3}\in \mathbb G_N^{\rm in}.
$$
At boundary nodes, the director values are fixed by the time-independent Dirichlet data $\mathbf I_N\bs g_2$. Hence \eqref{eq: nodal-length} follows by induction from \eqref{eq: fullyini}.

We next prove the discrete energy law. Taking
$
\bs \phi=({\rm Re\bs v^{m+\frac12}})/({1-\gamma})
$
in \eqref{weakvdiv0}, and using the skew-symmetry identity \eqref{eq: midpoint-kinetic}, yields
\begin{equation}\label{eq: fd-v-energy-mu0}
\begin{aligned}
\frac{\rm Re}{2(1-\gamma)}
\Big(\|\bs v^{m+1}\|_N^2-\|\bs v^m\|_N^2\Big)
&=
-\frac{\gamma\tau_m}{1-\gamma}\|\nabla \bs v^{m+\frac12}\|_N^2
-\tau_m\big(\widehat{\sig}^{\rm L}_0\big|^{m+\frac12},\nabla \bs v^{m+\frac12}\big)_N
\\
&\quad
-\tau_m\Big(
\nabla \bs n^{m+\frac12}\cdot
\big[(\bs n^{m+\frac12}\times \bs \mu_0^{m+\frac12})\times \bs n^{m+\frac12}\big],
\bs v^{m+\frac12}
\Big)_N.
\end{aligned}
\end{equation}
Next, take $\bs\psi=\bs\mu_0^{m+\frac12}\in \mathbb D_{N,0}$ in \eqref{weakndiv0}, one has
\begin{equation}\label{eq: ndiff}
\begin{aligned}
& \big(\bs n^{m+1}-\bs n^m,\bs\mu_0^{m+\frac12} \big)_N
=
\frac{\tau_m}{\gamma_1}
\Big(
\Big\{
\bs n^{m+\frac12}\times
\big[
\bs \mu_0^{m+\frac12}
-\gamma_1(\bs v^{m+\frac12}\cdot \nabla \bs n^{m+\frac12}
+\bs \Omega^{m+\frac12}\cdot \bs n^{m+\frac12})
\\
&
\qquad \qquad \qquad \qquad \qquad \qquad   -\gamma_2 \tt^{m+\frac12}\cdot \bs n^{m+\frac12}
\big]
\Big\}
\times \bs n^{m+\frac12},
\bs\mu_0^{m+\frac12}
\Big)_N.
\end{aligned}
\end{equation}
By Lemma~\ref{lemma: constrained-dg-identity}, it holds
$$
\big(-\bs \mu_0^{m+\frac12},\bs n^{m+1}-\bs n^m\big)_N
=
\mathcal F_N[\bs n^{m+1}] - \mathcal F_N[\bs n^m].
$$
Moreover, it is straightforward to verify that
\begin{equation*}
\Big(\bs n^{m+\frac12}\times
\bs \mu_0^{m+\frac12}\times \bs n^{m+\frac12}, \bs\mu_0^{m+\frac12} \Big)_N=
\|\bs n^{m+\frac12}\times \bs \mu_0^{m+\frac12}\|_N^2.
\end{equation*}
By repeatedly using identity \eqref{id: crossid2}, we have that
\begin{equation*}
\begin{aligned}
&\Big(\bs n^{m+\frac12}\times (\bs v^{m+\frac12}\cdot \nabla \bs n^{m+\frac12})\times \bs n^{m+\frac12},
\bs\mu_0^{m+\frac12}
\Big)_N=\Big( \bs v^{m+\frac12}\cdot \nabla \bs n^{m+\frac12},
\bs n^{m+\frac12}\times \bs\mu_0^{m+\frac12}\times \bs n^{m+\frac12}
\Big)_N\\
&\qquad \qquad \qquad \qquad \qquad  \qquad \qquad \qquad \qquad \qquad \;\; =\Big(
\nabla \bs n^{m+\frac12}\cdot
(\bs n^{m+\frac12}\times \bs\mu_0^{m+\frac12}\times \bs n^{m+\frac12}),
\bs v^{m+\frac12}
\Big)_N.
\end{aligned}
\end{equation*}
Hence \eqref{weakndiv0} gives
\begin{equation*}
\begin{aligned}
\big(\bs n^{m+1}-\bs n^m,&\bs \mu_0^{m+\frac12}\big)_N
=
\frac{\tau_m}{\gamma_1}
\|\bs n^{m+\frac12}\times \bs \mu_0^{m+\frac12}\|_N^2
-\tau_m\Big(\!
\nabla \bs n^{m+\frac12}\!\cdot\!
\big[(\bs n^{m+\frac12}\times \bs \mu_0^{m+\frac12})\times \bs n^{m+\frac12}\big],
\bs v^{m+\frac12}\!
\Big)_N
\\
&\qquad \qquad \;\;\;
-\tau_m\Big(
\bs \Omega^{m+\frac12}\cdot \bs n^{m+\frac12}
+\frac{\gamma_2}{\gamma_1}\tt^{m+\frac12}\cdot \bs n^{m+\frac12},
(\bs n^{m+\frac12}\times \bs \mu_0^{m+\frac12})\times \bs n^{m+\frac12}
\Big)_N.
\end{aligned}
\end{equation*}
Substituting the above equations into \eqref{eq: ndiff}, we arrive at
\begin{equation}\label{eq: fd-n-energy-mu0}
\begin{aligned}
\mathcal F_N[\bs n^{m+1}] - \mathcal F_N[\bs n^m]
&=
-\frac{\tau_m}{\gamma_1}
\|\bs n^{m+\frac12}\times \bs \mu_0^{m+\frac12}\|_N^2
\\
&
+\tau_m\Big(
\nabla \bs n^{m+\frac12}\cdot
\big[(\bs n^{m+\frac12}\times \bs \mu_0^{m+\frac12})\times \bs n^{m+\frac12}\big],
\bs v^{m+\frac12}
\Big)_N
\\
&
+\tau_m\Big(
\bs \Omega^{m+\frac12}\cdot \bs n^{m+\frac12}
+\frac{\gamma_2}{\gamma_1}\tt^{m+\frac12}\cdot \bs n^{m+\frac12},
(\bs n^{m+\frac12}\times \bs \mu_0^{m+\frac12})\times \bs n^{m+\frac12}
\Big)_N.
\end{aligned}
\end{equation}
Finally, using the definition of $\widehat{\sig}^{\rm L}_0\big|^{m+\frac12}$ together with the relations $\alpha_2+\alpha_3=\gamma_2$ and $\alpha_3-\alpha_2=\gamma_1$, the same pointwise algebra structure as in the proof of Theorem~\ref{thm: main-scheme} yields
\begin{equation*}
\begin{aligned}
&-\big(\widehat{\sig}^{\rm L}_0\big|^{m+\frac12},\nabla \bs v^{m+\frac12}\big)_N
+\Big(
\bs \Omega^{m+\frac12}\cdot \bs n^{m+\frac12}
+\frac{\gamma_2}{\gamma_1}\tt^{m+\frac12}\cdot \bs n^{m+\frac12},
(\bs n^{m+\frac12}\times \bs \mu_0^{m+\frac12})\times \bs n^{m+\frac12}
\Big)_N
\\
&=
-\Big(\alpha_1+\frac{\gamma_2^2}{\gamma_1}\Big)
\Big\|
\tt^{m+\frac12}:\bs n^{m+\frac12}\otimes \bs n^{m+\frac12}
\Big\|_N^2
-\alpha_4\|\tt^{m+\frac12}\|_N^2
-\Big(\alpha_5+\alpha_6-\frac{\gamma_2^2}{\gamma_1}\Big)
\|\tt^{m+\frac12}\cdot \bs n^{m+\frac12}\|_N^2.
\end{aligned}
\end{equation*}
Adding \eqref{eq: fd-v-energy-mu0}, \eqref{eq: fd-n-energy-mu0}, and the above equation, we obtain
\begin{equation*}
\begin{aligned}
&\frac{\rm Re}{2(1-\gamma)}
\Big(\|\bs v^{m+1}\|_N^2-\|\bs v^m\|_N^2\Big)
+\mathcal F_N[\bs n^{m+1}] - \mathcal F_N[\bs n^m]
\\
&=
-\frac{\gamma\tau_m}{1-\gamma}\|\nabla \bs v^{m+\frac12}\|_N^2
-\frac{\tau_m}{\gamma_1}\|\bs n^{m+\frac12}\times \bs \mu_0^{m+\frac12}\|_N^2
\\
&\quad
-\Big(\alpha_1+\frac{\gamma_2^2}{\gamma_1}\Big)\tau_m
\Big\|
\tt^{m+\frac12}:\bs n^{m+\frac12}\otimes \bs n^{m+\frac12}
\Big\|_N^2
-\alpha_4\tau_m\|\tt^{m+\frac12}\|_N^2
\\
&\quad
-\Big(\alpha_5+\alpha_6-\frac{\gamma_2^2}{\gamma_1}\Big)\tau_m
\|\tt^{m+\frac12}\cdot \bs n^{m+\frac12}\|_N^2
\le 0,
\end{aligned}
\end{equation*}
which ends the proof. 
\end{proof}

\subsection{Nonlinear solver and adaptive time-stepping strategy}\label{sect: ink}

The fully discrete system \eqref{weakvdiv0}-\eqref{weakom} is nonlinear.
At each time step, we therefore solve the coupled algebraic system for the unknown variables
$(\bs v^{m+1},\bs n^{m+1},\bs \mu_0^{m+\frac12})$
by an inexact Newton--Krylov (INK) method; see \cite{kelley1995iterative}.
In all computations, the nonlinear iteration is terminated when the relative residual is below $10^{-10}$.

Theorem~\ref{thm: fully-discrete-structure} shows that the fully-discrete Rdg-spectral scheme preserves
the discrete unit-length constraint and satisfies the discrete energy law for any positive time step $\tau_m$.
This makes it natural to combine the scheme with a variable-step strategy.
To resolve rapid transient stages efficiently while avoiding unnecessarily small time steps in slowly varying regimes,
we adopt the energy-based adaptive rule of \cite{xu2024second}:
\begin{equation}\label{eq: timeadpt}
\tau_{m+1}
=
\max\left\{
\tau_{\min},
\frac{\tau_{\max}}
{\sqrt{1+\alpha_{\tau}\left|\left(E_N^m-E_N^{m-1}\right)/\tau_m\right|^2}}
\right\},
\quad m\ge 1,
\end{equation}
where
\begin{equation}\label{eq: discrete-total-energy}
E_N^m
=
\frac{\mathrm{Re}}{2(1-\gamma)}\|\bs v^m\|_N^2+\mathcal F_N[\bs n^m].
\end{equation}
Here $\alpha_\tau>0$ controls the sensitivity of the time-step update, and
$\tau_{\min}$ and $\tau_{\max}$ denote prescribed lower and upper bounds of the time-step, respectively.
For the first adaptive update, we set $\tau_1=\tau_0$.

In the convergence tests of Section~\ref{sect: conv}, we use uniform time steps in order to measure the temporal order accurately. In Sections~\ref{sect: sp}-\ref{sect: sf}, we switch to the adaptive time-step strategy \eqref{eq: timeadpt},
which is particularly effective for the multistage dynamics induced by anisotropic elasticity and shear flow.

\section{Representative numerical experiments}
In this section we present representative numerical results for the full Ericksen--Leslie model to assess the structure-preserving Rdg-spectral method developed in the previous sections. The tests include demonstration of the spatial and temporal convergence rates, verification of the discrete unit-length preservation and energy dissipation law, and simulations illustrating the effects of anisotropic Oseen--Frank elasticity and shear flows. The results demonstrate that the proposed method provides an accurate and robust numerical tool for the simulation of full Ericksen--Leslie system beyond the one-constant  approximation and reduced-stress settings.

Unless otherwise specified, the Leslie coefficients are fixed as
$$
(\alpha_1,\alpha_2,\alpha_3,\alpha_4,\alpha_5,\alpha_6)=(1,0.25,1.25,1,1.5,3), 
$$
so that $\gamma_1=1$ and $\gamma_2=1.5$ based on the requirement \eqref{eq: coeff2}.
We also take $\gamma=0.5$ and $\mathrm{Re}=0.8$.
At each time step, the nonlinear algebraic system is solved by the INK method
described in Section~\ref{sect: ink}, and the nonlinear iteration is terminated once the relative residual falls below $10^{-10}$.

\subsection{Convergence tests}\label{sect: conv}

The goal of this subsection is to demonstrate numerically the spatial and temporal convergence rates of the proposed Rdg-spectral method developed herein using a contrived analytic solution.

Consider the computational domain $\Omega=[-1,1]^3$, we assume the following analytic expressions for $(\bs v,\bs n)$ of the reformulated EL system~\eqref{eq: elmodelrot}:
\begin{equation}\label{eq: contrivd}
	\begin{aligned}
	v_1=&10x_2x_3\big(x_1^2-1\big)^5\big(x_2^2-1\big)^4\big(x_3^2-1\big)^4t+e^{x_2x_3},\\
		v_2=&-5x_1x_3\big(x_1^2-1\big)^4\big(x_2^2-1\big)^5\big(x_3^2-1\big)^4t+e^{x_1x_3},\\
		v_3=&-5x_1x_2\big(x_1^2-1\big)^4\big(x_2^2-1\big)^4\big(x_3^2-1\big)^5t+e^{x_1x_2},\\
		n_1=&\sin\Big(\big(x_1^2-1\big)\big(x_2^2-1\big)\big(x_3^2-1\big)t\Big)\cos\Big(\big(x_1^2-1\big)\big(x_2^2-1\big)\big(x_3^2-1\big)t+\frac{\pi}{3}\Big),\\
		n_2=&\sin\Big(\big(x_1^2-1\big)\big(x_2^2-1\big)\big(x_3^2-1\big)t\Big)\sin\Big(\big(x_1^2-1\big)\big(x_2^2-1\big)\big(x_3^2-1\big)t+\frac{\pi}{3}\Big),\\
		n_3=&\cos\Big(\big(x_1^2-1\big)\big(x_2^2-1\big)\big(x_3^2-1\big)t\Big).
	\end{aligned}
\end{equation}
It is straightforward to verify  that the contrived velocity field $\bs v$ is divergence-free, while the contrived  director field $\bs n$ satisfies the unit-length constraint $|\bs n|=1$. 
The forcing terms $\bs h_1(x,t)$ and $\bs h_2(x,t)$ are then defined so that \eqref{eq: contrivd}
solves the forced reformulated system associated with \eqref{eq: elmodelrot}, namely
\begin{align*}
		& \bs h_1(\bs x,t)=\bs v_t+\bs v \cdot \nabla \bs v+\nabla P-\frac{\gamma}{\rm Re} \Delta \bs v-\frac{1-\gamma}{\rm Re} \nabla \cdot \widehat{\sig}^{\mathrm{L}}-\frac{1-\gamma}{\rm Re} \nabla \bs n \cdot\bigg[\bigg(\bs n \times \frac{\delta \mathcal{F}}{\delta \bs n}\bigg) \times \bs n\bigg], \\*
		& \bs h_2(\bs x,t)=\bs n_t-\frac{1}{\gamma_1}\Big( \bs n \times\big(\bs \mu-\gamma_1(\bs v \cdot \nabla \bs n+\om \cdot \bs n)-\gamma_2 \tt \cdot \bs n\big)\Big) \times \bs n .
\end{align*}
Although the pressure is eliminated from the discrete formulation through the exact divergence-free spectral basis, it is needed in the evaluation of $\bs h_1$ in the convergence test. For this purpose, we take
\begin{equation*}
	P=\big(x_1^2-1\big)\big(x_2^2-1\big)\big(x_3^2-1\big)t.
\end{equation*}

For the spatial convergence test, we fix the time step size at $\tau=10^{-4}$ and increase the polynomial order from $N=4$ to $N=24$. Figure~\ref{figs: spatialtest} shows the $L^\infty$-errors of $\bs n$ and $\bs v$ at $t=0.2$ as functions of $N$. The errors decay exponentially as $N$ increases until they level off at around $10^{-10}$,  which is likely caused by the combined effect of temporal discretization, nonlinear solver tolerance, and roundoff.
\begin{figure}[tbp]
\begin{center}
	\subfigure[$L^{\infty}$-error of $\bs n$]{ \includegraphics[scale=.54]{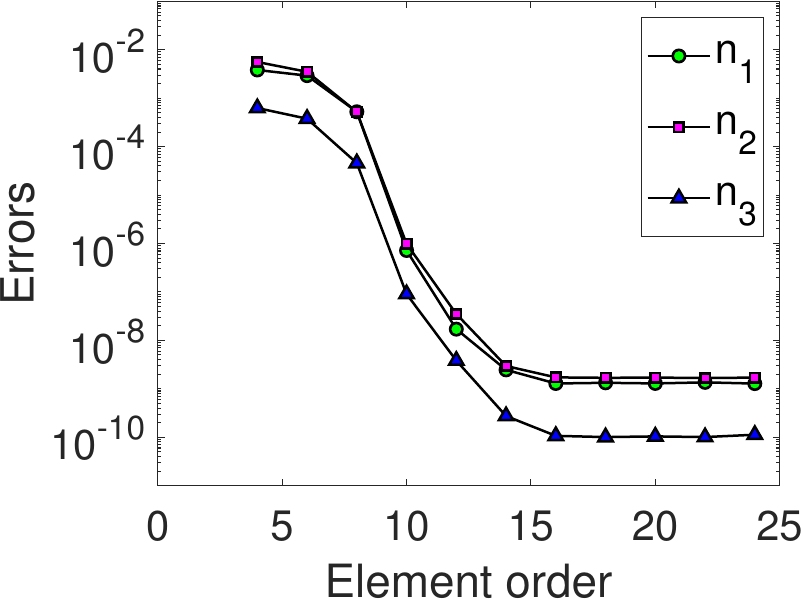}}
	\subfigure[$L^{\infty}$-error of $\bs v$ ]{ \includegraphics[scale=.54]{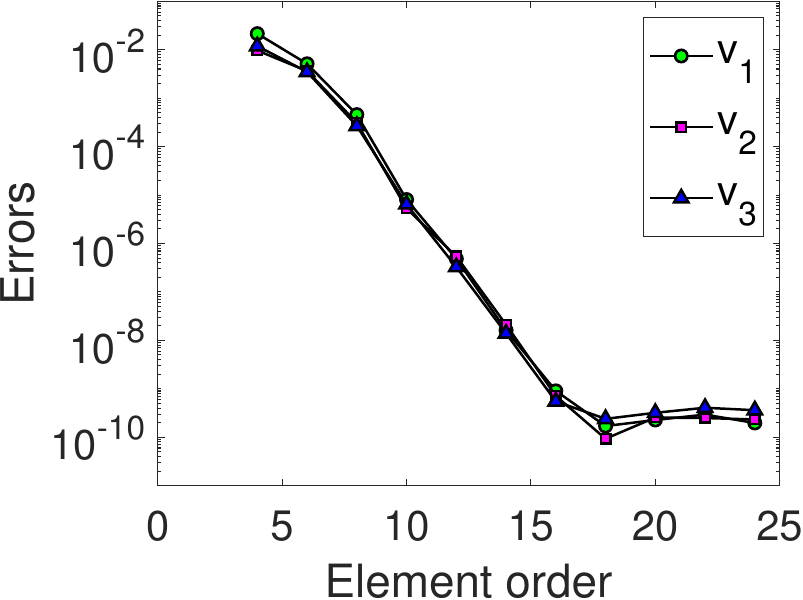}}
   \caption{\small Spatial convergence test: $L^\infty$-errors of the Rdg-spectral method versus the polynomial order $N$ for (a) $\bs n$ and (b) $\bs v$.} 
	 \label{figs: spatialtest}
\end{center}
\end{figure}

For the temporal convergence test, we fix $N=30$ and decrease the time step size from $10^{-1}$ to
$7.8125\times 10^{-4}$ by successive halving.
The resulting $L^\infty$-errors at $t=0.2$ are displayed in Figure~\ref{figs: temporaltest}.
The slopes of the error curves agree with the second-order reference line,
confirming the expected second-order temporal accuracy of the midpoint Rdg discretization.
\begin{figure}[tbp]
\begin{center}
	\subfigure[$L^{\infty}$-error of $\bs n$]{ \includegraphics[scale=.54]{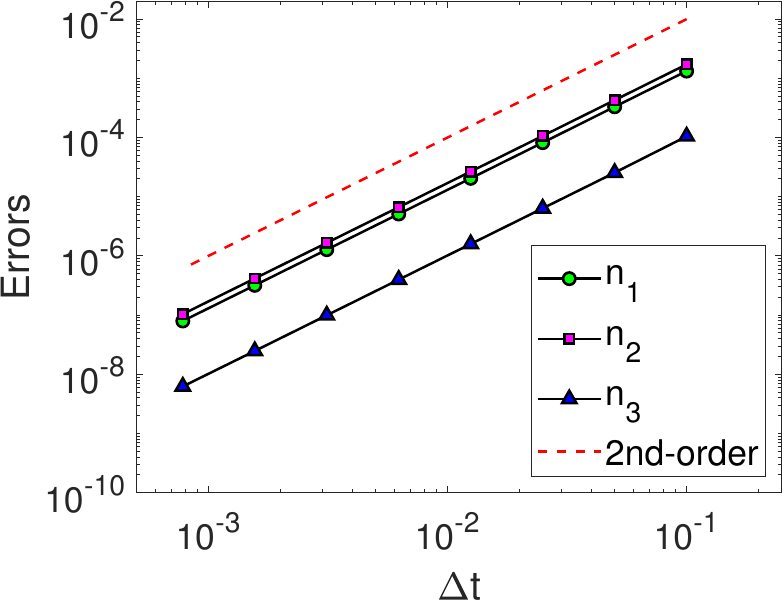}}
	\subfigure[$L^{\infty}$-error of $\bs v$ ]{ \includegraphics[scale=.54]{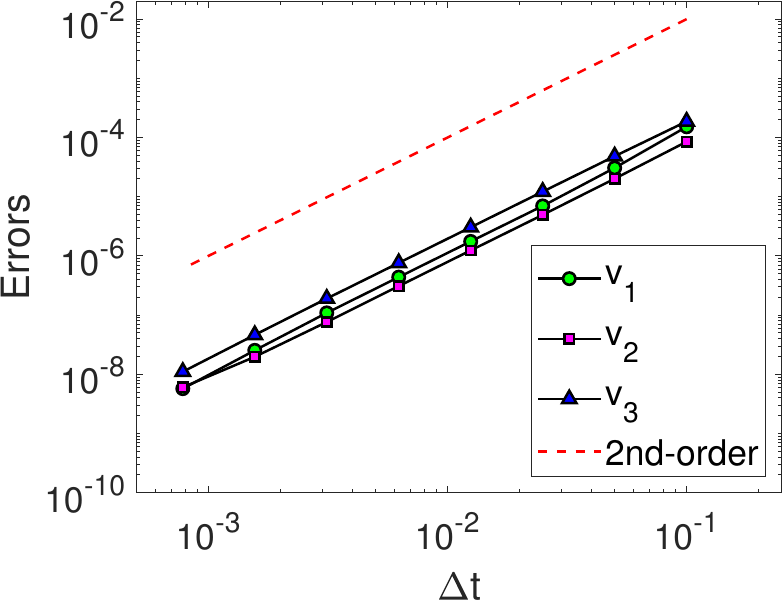}}
   \caption{\small Temporal convergence test: $L^\infty$-errors of the Rdg-spectral method versus the time step size $\tau$ for (a) $\bs n$ and (b) $\bs v$. }
	 \label{figs: temporaltest}
\end{center}
\end{figure}

\subsection{Structure-preserving test}\label{sect: sp}
We next verify the discrete structure-preserving properties of the fully-discrete Rdg scheme.
The computational domain is again taken as $\Omega=[-1,1]^3$, and we restrict attention to solutions
that are independent of the $x_3$-direction.
The polynomial order is set to $N=30$ in the $x_1$- and $x_2$-directions.
For time integration, we use the adaptive time-stepping strategy introduced in Section~\ref{sect: ink} with $\alpha_{\tau}=10^{-3}$, $\tau_{\text{max}}=2\times 10^{-4}$ and $\tau_{\text{min}}=10^{-6}$. 

We first choose the elastic coefficients
$
(\kappa_1,\kappa_2,\kappa_3)=(0.1,\,0.5,\,2.5),
$
and take the initial director field $\bs n_0=(n_1,n_2,n_3)^{\intercal}$ as
\begin{equation}
\label{eq: init-n-1}
\begin{aligned}
n_1 &= \sin\!\big(\pi(1-x_1^2)(1-x_2^2)\big)\cos(\pi x_1),\\
n_2 &= \sin\!\big(\pi(1-x_1^2)(1-x_2^2)\big)\sin(\pi x_1),\\
n_3 &= \cos\!\big(\pi(1-x_1^2)(1-x_2^2)\big),
\end{aligned}
\end{equation}
together with the initial velocity field 
\begin{equation}
\label{eq: init-v-1}
\bs v_0
=
\Big(
10x_2(x_1^2-1)^2(x_2^2-1),\,
-10x_1(x_1^2-1)(x_2^2-1)^2,\,
0
\Big).
\end{equation}
A homogeneous Dirichlet boundary condition is imposed on the velocity field, while a time-independent
nonhomogeneous Dirichlet boundary condition is imposed on the director field.
Under these boundary conditions, the fully-discrete Rdg scheme satisfies the discrete energy law established in Theorem~\ref{thm: fully-discrete-structure}.

We first depict the side views (with azimuth=$120^\circ$, elevation=$30^\circ$) of the initial director field and velocity field in Figure \ref{figs: case1} (a) and (b), respectively. The discrete total energy $E_N^m$ defined  in \eqref{eq: discrete-total-energy} is plotted in Figure~\ref{figs: case1} (c). It shows that the discrete energy decreases monotonically without spurious oscillations, which is in agreement with the discrete energy law of Theorem~\ref{thm: fully-discrete-structure}. Figure~\ref{figs: case1} (d) displays the history of the unit-length error at LGL points, namely, 
$
\max_{\bs x_j\in \mathbb G_N}\big||\bs n^m(\bs x_j)|-1\big|.
$
The error remains at the level of approximately $10^{-10}$ throughout the computation,
which is consistent with the stopping tolerance of the INK solver.
This indicates that the observed unit-length error is dominated by the nonlinear solver tolerance,
rather than by the time or space discretization itself. The snapshots at $t=15$ in Figure~\ref{figs: case1} (e)-(f) show that the director field has relaxed to a stationary low-energy configuration:
it is close to being uniform in the interior, while the velocity magnitude has decayed to the order of $10^{-9}$ and can be characterized by four symmetric rotating vortices. Taken together, these results confirm that the fully-discrete Rdg scheme preserves the unit-length constraint and energy dissipation law at the discrete level.
\begin{figure}[tbp]
\begin{center}
	\subfigure[Initial profile of director]{ \includegraphics[scale=.34]{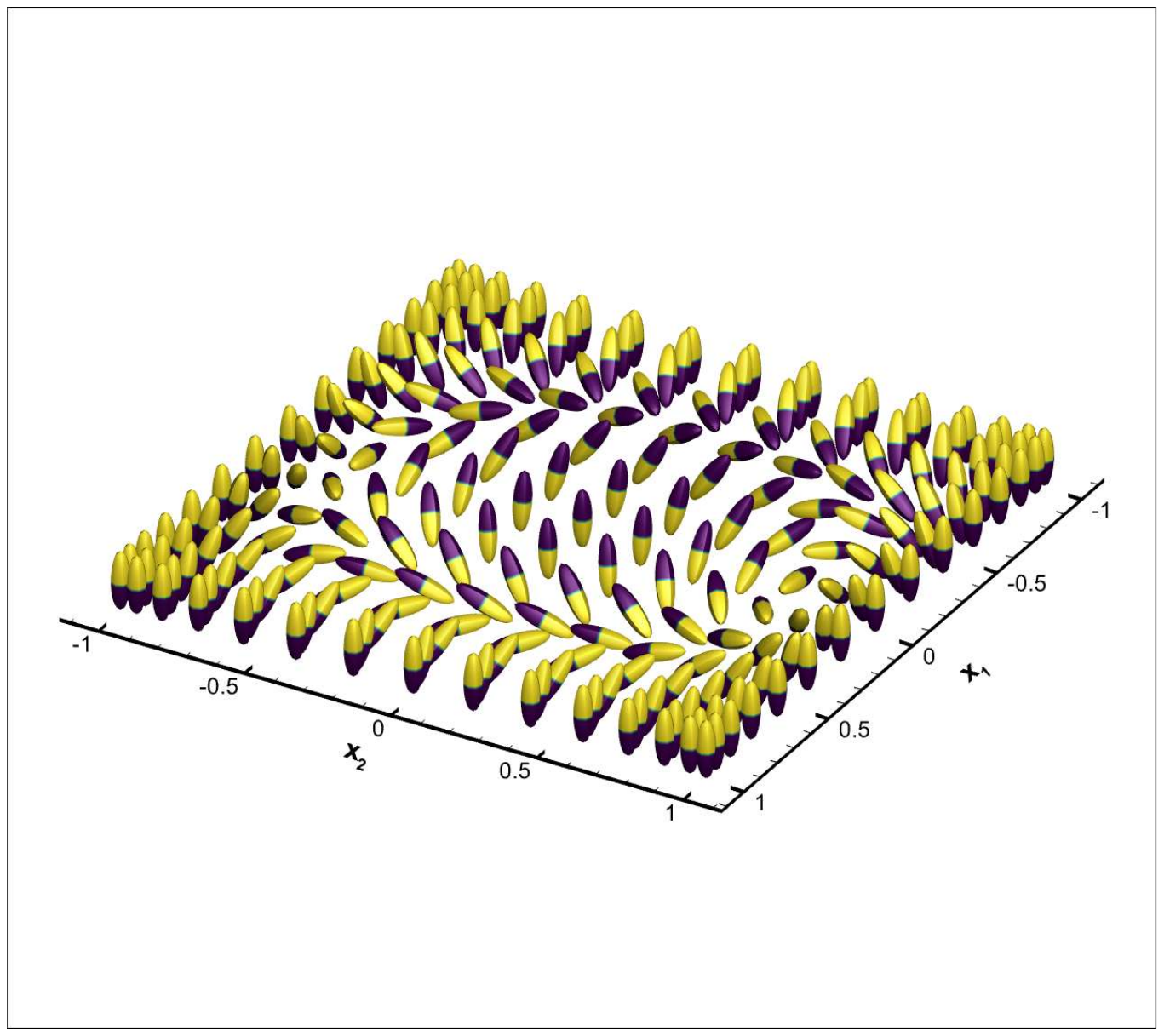}}
	\subfigure[Initial profile of velocity]{ \includegraphics[scale=.38]{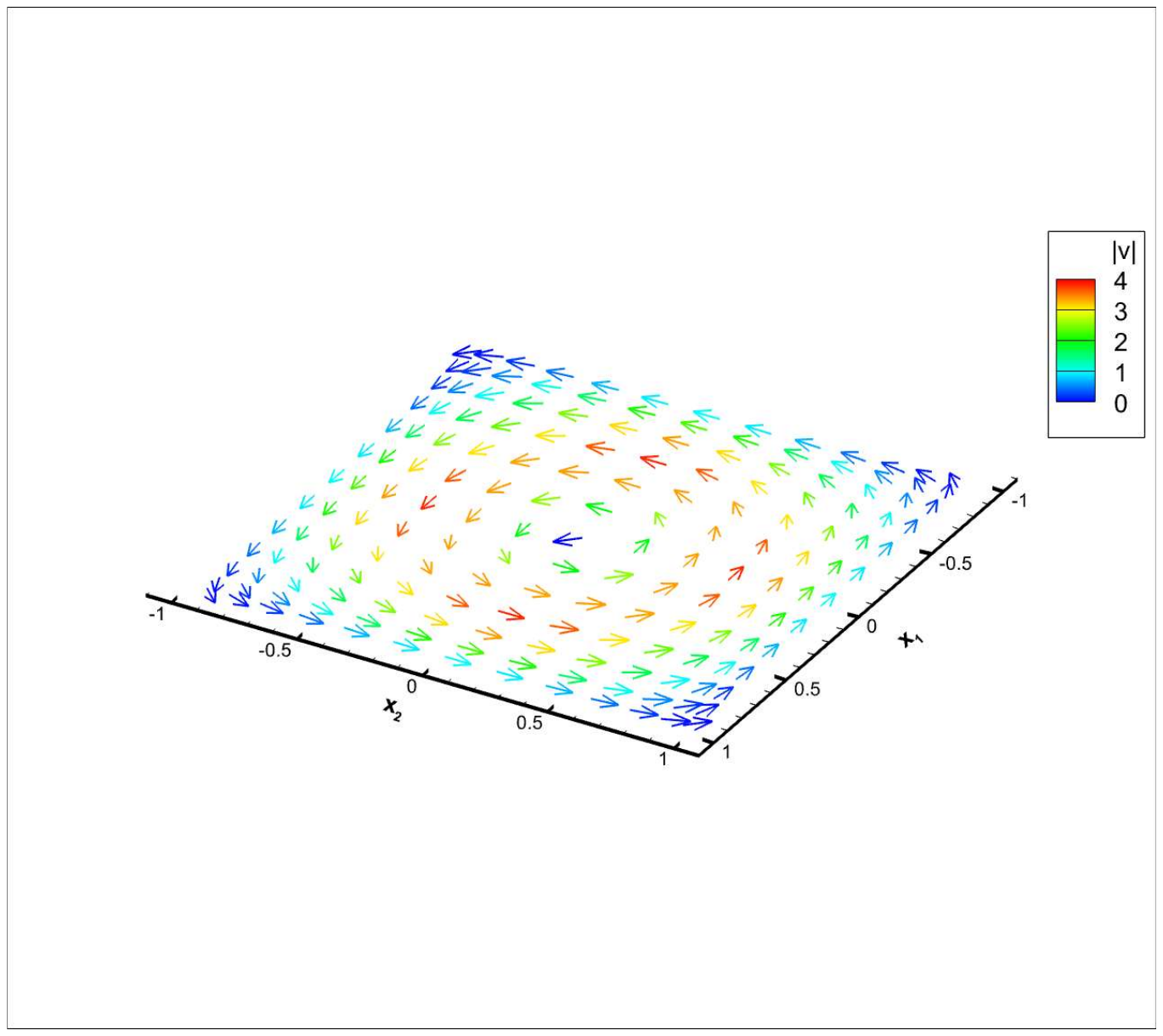}}\\	
	\subfigure[Total energy vs Time]{ \includegraphics[scale=.46]{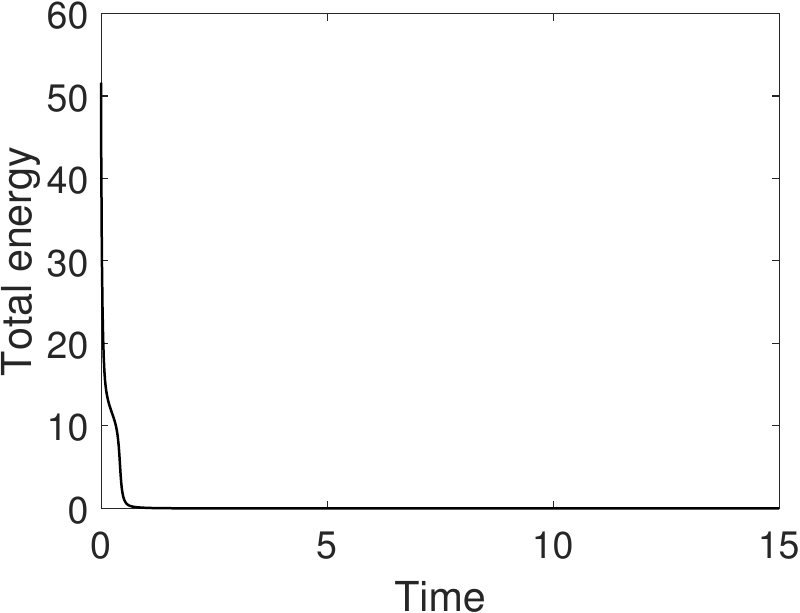}}
	\subfigure[Length error vs Time]{ \includegraphics[scale=.46]{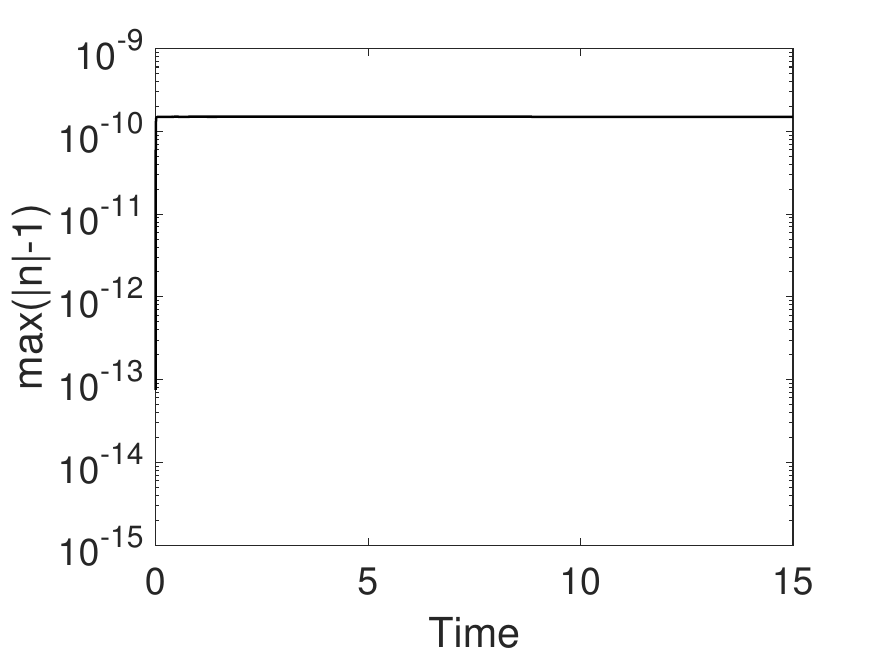}}\\
	\subfigure[$\bs n$ at $t=15$]{ \includegraphics[scale=.34]{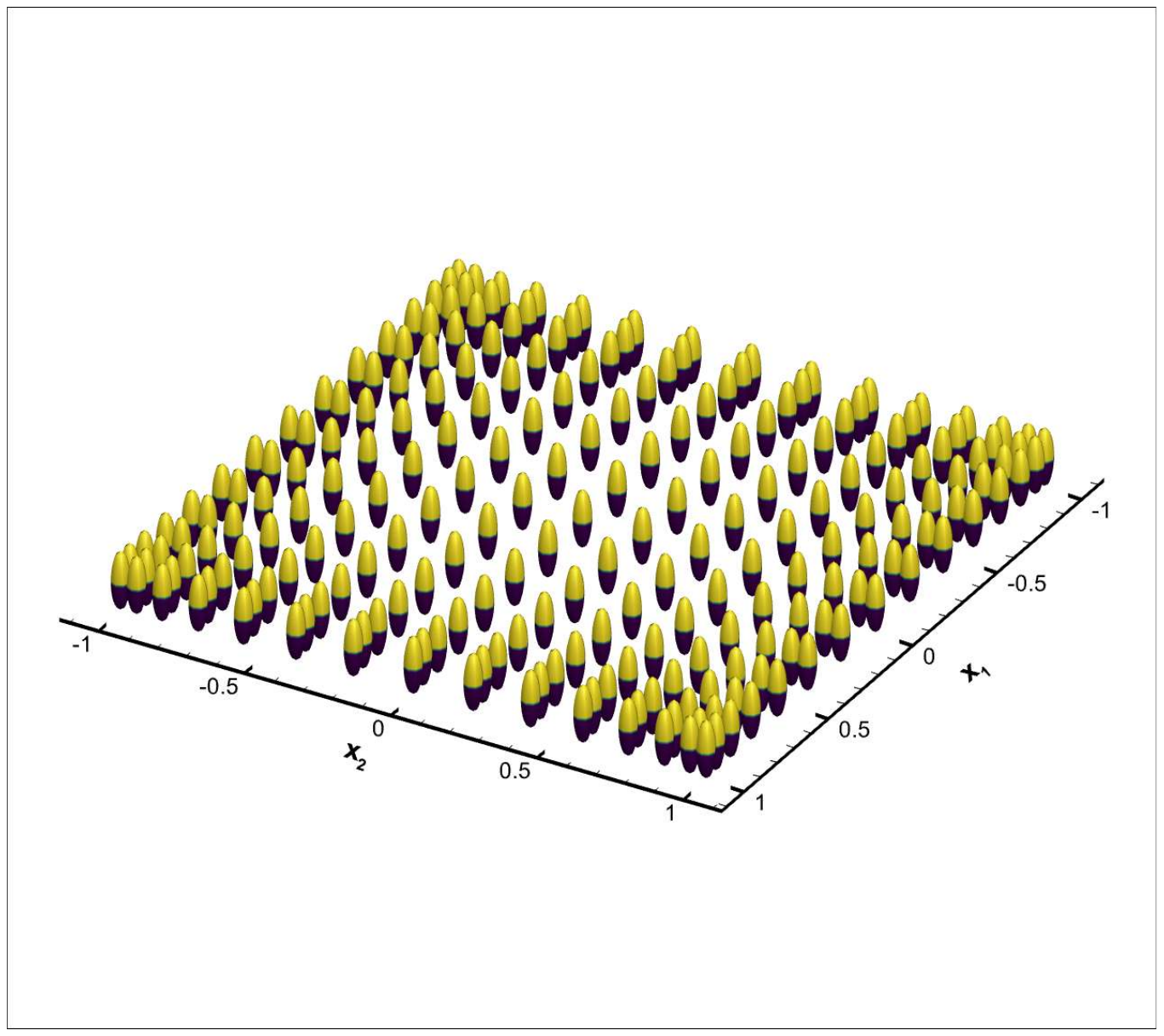}}
	\subfigure[$\bs v$ at $t=15$]{ \includegraphics[scale=.38]{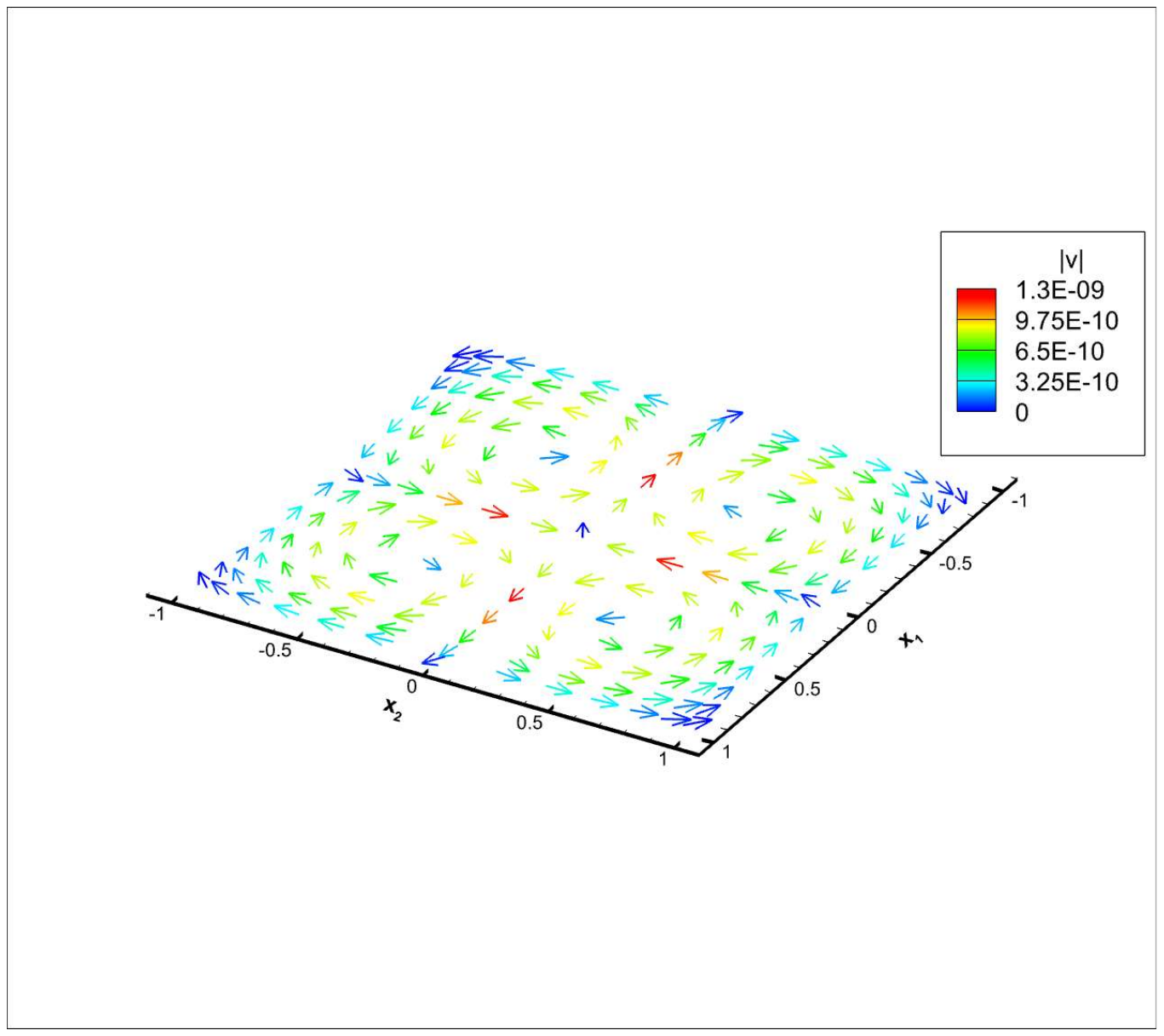}}\\
   \caption{\small Structure-preserving test with initial data \eqref{eq: init-n-1}--\eqref{eq: init-v-1} and elastic coefficients $(\kappa_1,\kappa_2,\kappa_3)=(0.1,0.5,2.5)$. (a) Initial director field; (b) initial velocity field; (c) history of the discrete total energy $E_N^m$; (d) history of the nodal length error $\max_{\bs x_j\in \mathbb G_N}\big||\bs n(\bs x_j)|-1\big|$; (e) director field at $t=15$; (f) velocity field at $t=15$. }
	 \label{figs: case1}
\end{center}
\end{figure}

\subsection{Dynamics under anisotropic elasticity}\label{sect: ae}
In this subsection, we investigate the influence of anisotropic elastic coefficients on the dynamics of the full EL system.     The initial velocity field is kept fixed as in \eqref{eq: init-v-1}, while the initial director field is chosen as 
\begin{equation}\label{eq: ncase2}
	\bs n_0=\Big(\sin\big(2\sin(\pi x_1)\big)\cos\big(\pi x_2\big),\; \sin\big(2\sin(\pi x_1)\big)\sin\big(\pi x_2\big),\;\cos\big(2\sin(\pi x_1)\big)\Big).
\end{equation}
This initial profile is spatially nonuniform along the boundary, as shown in Figure~\ref{figs: anicase0}(a).
\begin{figure}[tbp]
\begin{center}
	\subfigure[Initial profile of director field]{ \includegraphics[scale=.34]{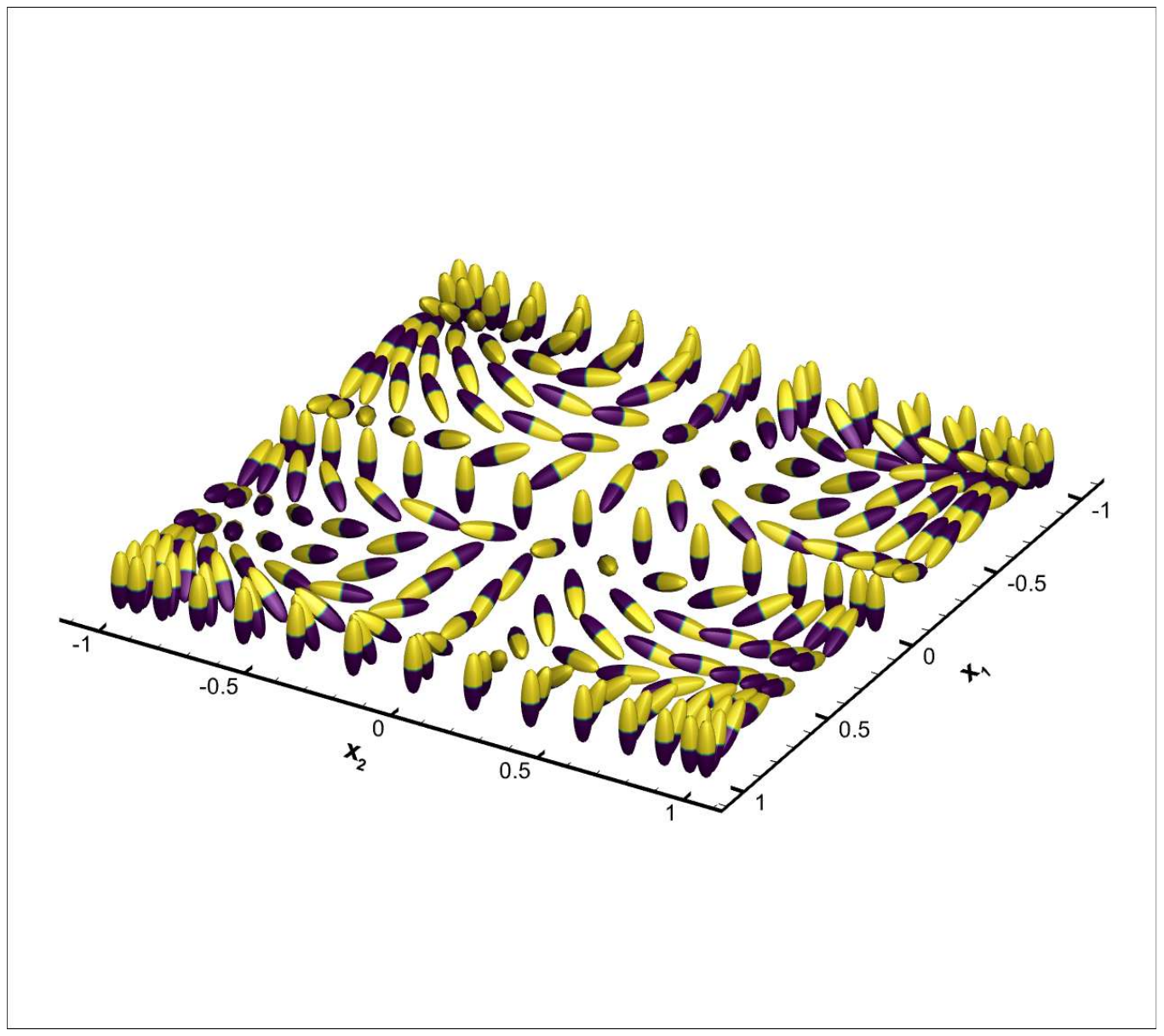}}\;\;\;
	\subfigure[Total energy vs Time]{ \includegraphics[scale=.46]{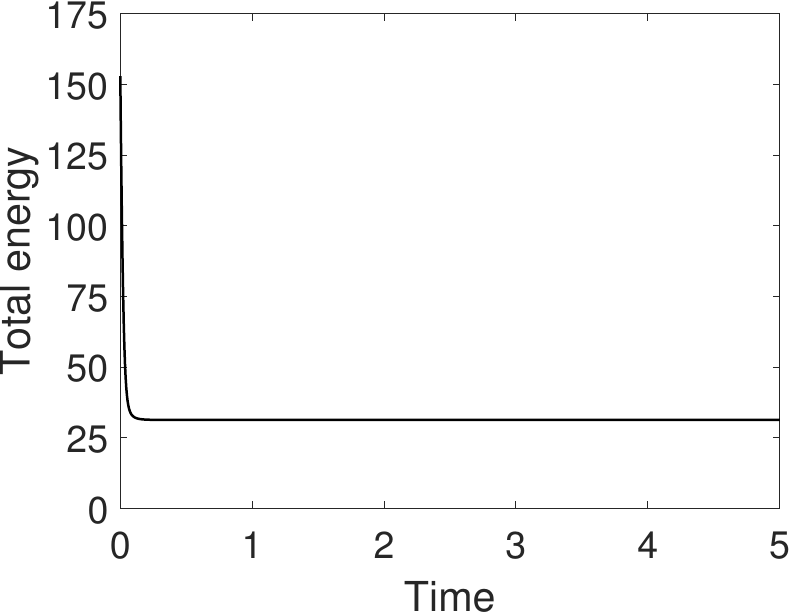}}\\
	\subfigure[$\bs n$ at t=5]{ \includegraphics[scale=.34]{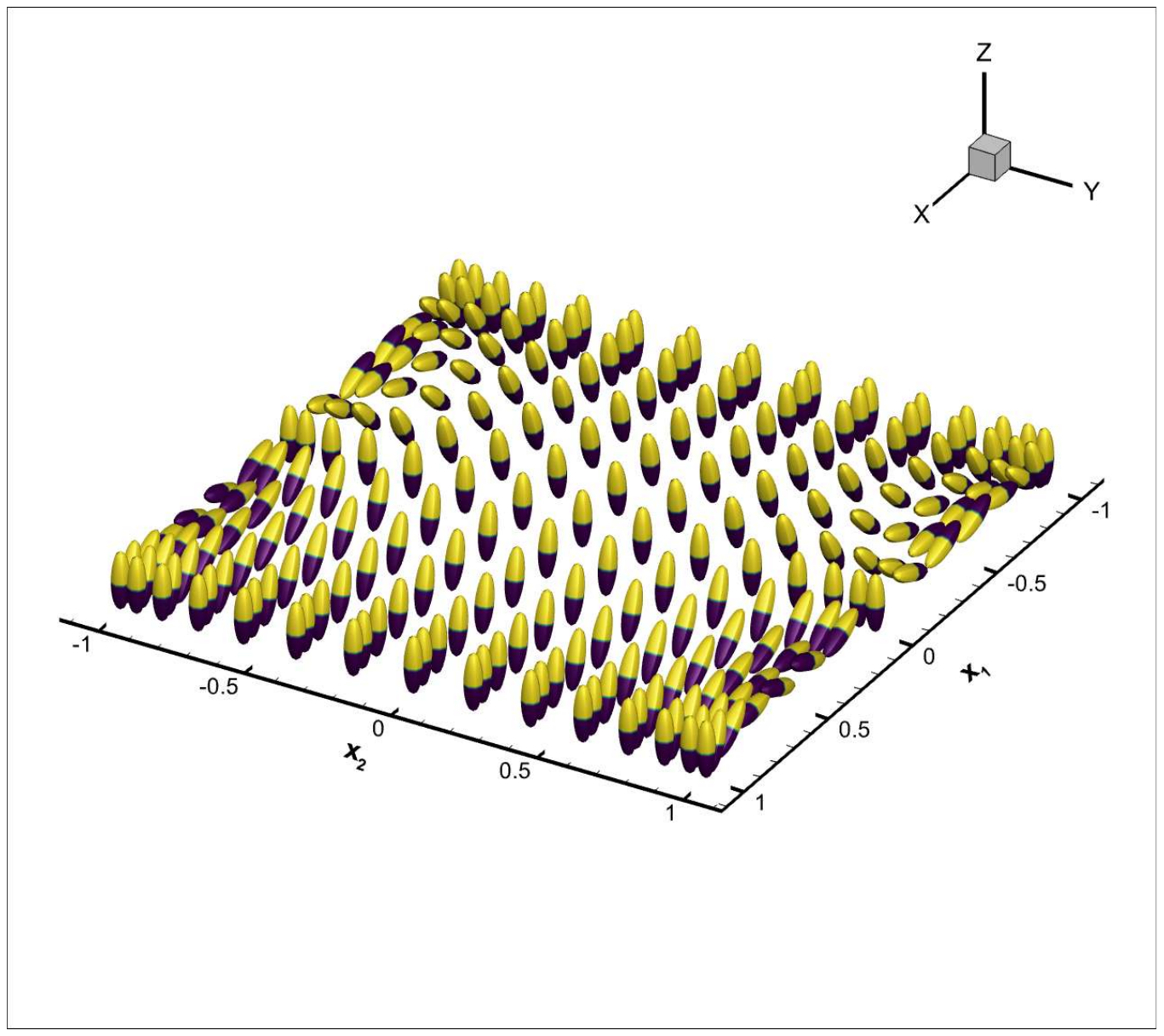}}	
	\subfigure[$\bs v$ at t=5]{ \includegraphics[scale=.34]{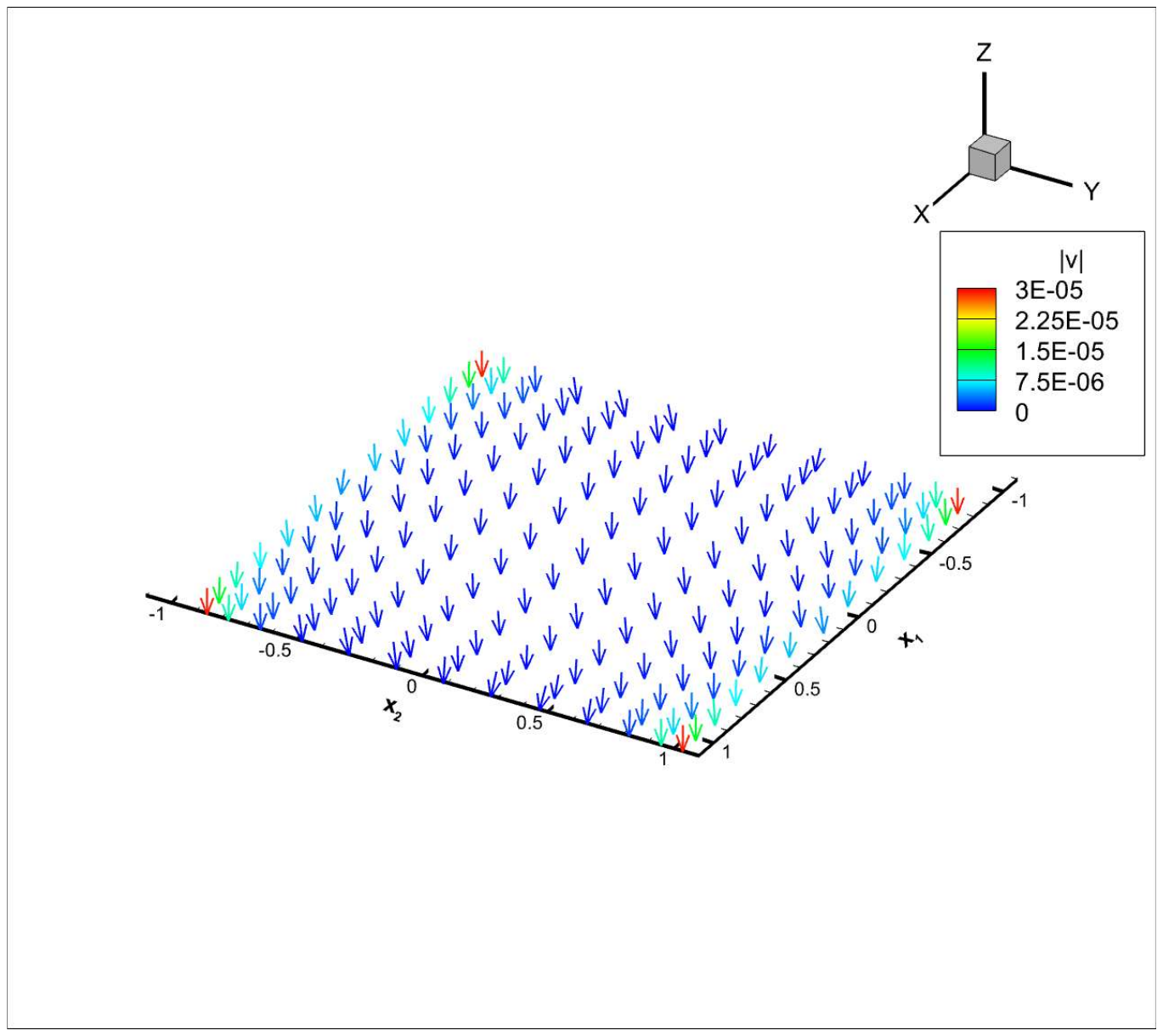}}	
   \caption{\small Reference isotropic case with initial velocity field \eqref{eq: init-v-1}, initial director field \eqref{eq: ncase2} and elastic coefficients $(\kappa_1,\kappa_2,\kappa_3)=(2.5,2.5,2.5)$. (a) Initial director field; (b) history of the discrete total energy $E_N^m$; (c) director field at $t=5$; (d) velocity field at $t=5$.}
	 \label{figs: anicase0}
\end{center}
\end{figure}

We first consider the isotropic reference case $(\kappa_1,\kappa_2,\kappa_3)=(2.5,2.5,2.5)$ and depict the corresponding numerical results in Figure~\ref{figs: anicase0}. The discrete total energy exhibits a rapid transient decay and then approaches a plateau,
indicating that the solution has entered a stationary  regime.
At $t=5$, the director field is already nearly homogeneous in the interior and undergoes a smooth transition to the prescribed nonuniform boundary orientation; see Figure~\ref{figs: anicase0}(c). The velocity field, shown in Figure~\ref{figs: anicase0}(d), is predominantly oriented in the direction normal to the $x_1$--$x_2$ plane,
and its maximum magnitude has decayed to the order of $10^{-5}$.

We next turn to the anisotropic cases $(\kappa_1,\;\kappa_2,\;\kappa_3)=(2.5,\;0.1,\;0.1)$, $(0.1,\;2.5,\;0.1)$ and $(0.1$, $0.1,$ $2.5)$.  The corresponding discrete total energy histories are plotted in Figure~\ref{figs: anicaseenergy}. In contrast to the isotropic reference case, the anisotropic configurations display visibly different relaxation pathways, and in particular may exhibit multistage decay over widely separated time scales.
\begin{figure}[tbp]
\begin{center}
	\subfigure[Total energy vs Time]{\includegraphics[scale=.34]{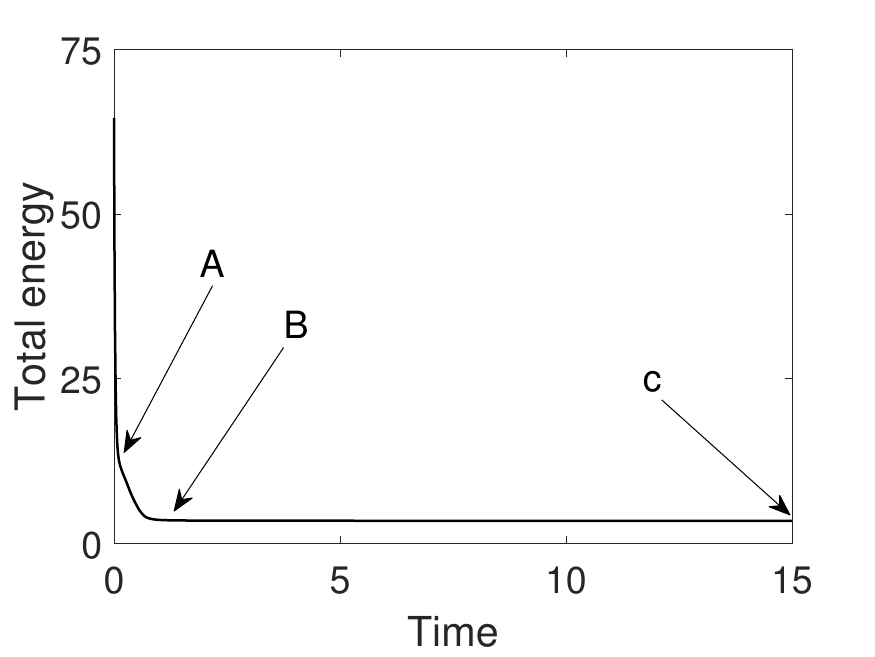}}
	\subfigure[Total energy vs Time]{\includegraphics[scale=.34]{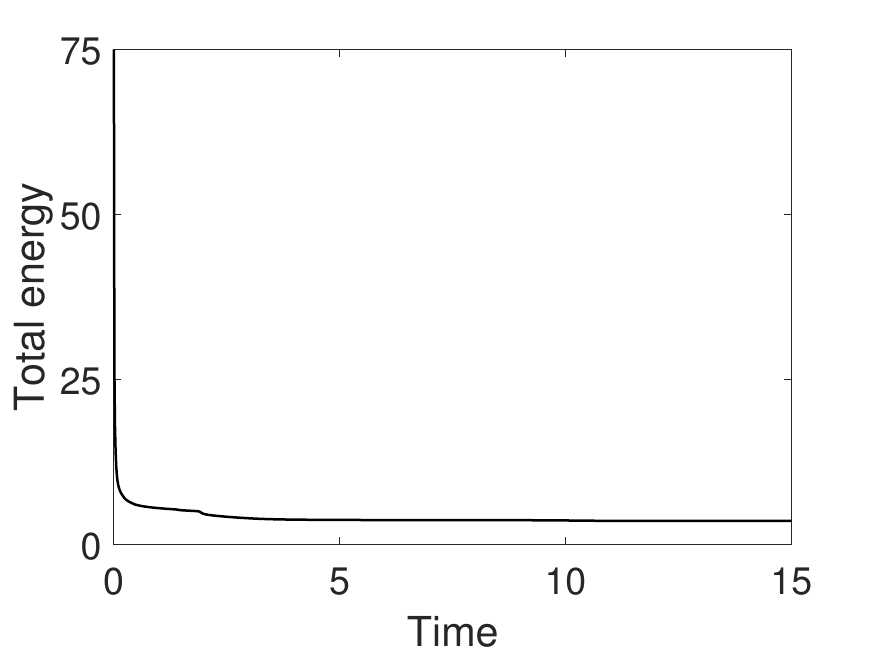}}
	\subfigure[Total energy vs Time]{\includegraphics[scale=.34]{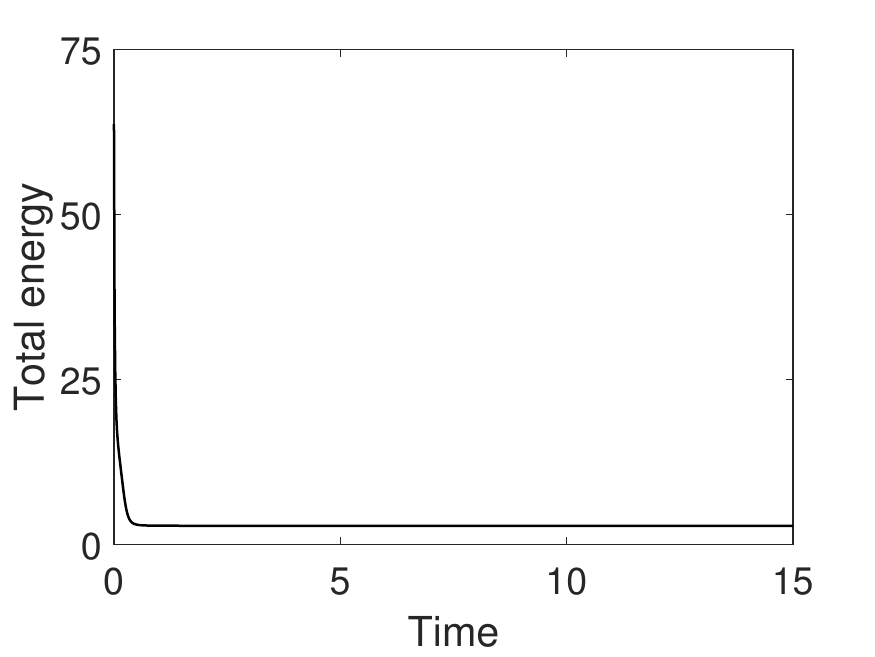}}
   \caption{\small Histories of the discrete total energy $E_N^m$ for the anisotropic cases with initial data \eqref{eq: init-v-1}-\eqref{eq: ncase2}: (a) $(\kappa_1,\kappa_2,\kappa_3)=(2.5,0.1,0.1)$; (b) $(\kappa_1,\kappa_2,\kappa_3)=(0.1,2.5,0.1)$; (c) $(\kappa_1,\kappa_2,\kappa_3)=(0.1,0.1,2.5)$.}
	 \label{figs: anicaseenergy}
\end{center}
\end{figure}

To interpret these dynamics more quantitatively, we further monitor the three discrete elastic components
$$
E_{\mathrm{splay}}^m=\frac{\kappa_1}{2}\|\nabla\cdot \bs n^m\|_N^2,\quad
E_{\mathrm{twist}}^m=\frac{\kappa_2}{2}\|\bs n^m\cdot(\nabla\times \bs n^m)\|_N^2,\quad
E_{\mathrm{bend}}^m=\frac{\kappa_3}{2}\|\bs n^m\times(\nabla\times \bs n^m)\|_N^2.
$$
For the splay-dominated case $(\kappa_1,\kappa_2,\kappa_3)=(2.5,\,0.1,\,0.1),$ the time histories of these three components are shown in Figure~\ref{figs: anicase1energy}, and representative snapshots are displayed in Figure~\ref{figs: anicase1}. A clear multistage relaxation process can be identified. During the initial stage, the total energy decreases rapidly, accompanied by a pronounced decay of the splay component in Figure~\ref{figs: anicase1energy}(a), while the director field becomes smoother in the interior and the most visible changes in the velocity orientation occur near the four corners. At later times, the relaxation becomes much slower and a localized vortex-like director texture develops in the highlighted region of Figure~\ref{figs: anicase1}(c), (e). Although the total energy has nearly flattened by this stage, the component energies continue to evolve slightly, showing that the solution is still undergoing slow internal rearrangement. This behavior is consistent with the fact that a dominant splay coefficient drives the system toward configurations with reduced divergence-dominated distortion.
\begin{figure}[tbp]
\begin{center}
	\subfigure[Splay energy vs Time]{\includegraphics[scale=.34]{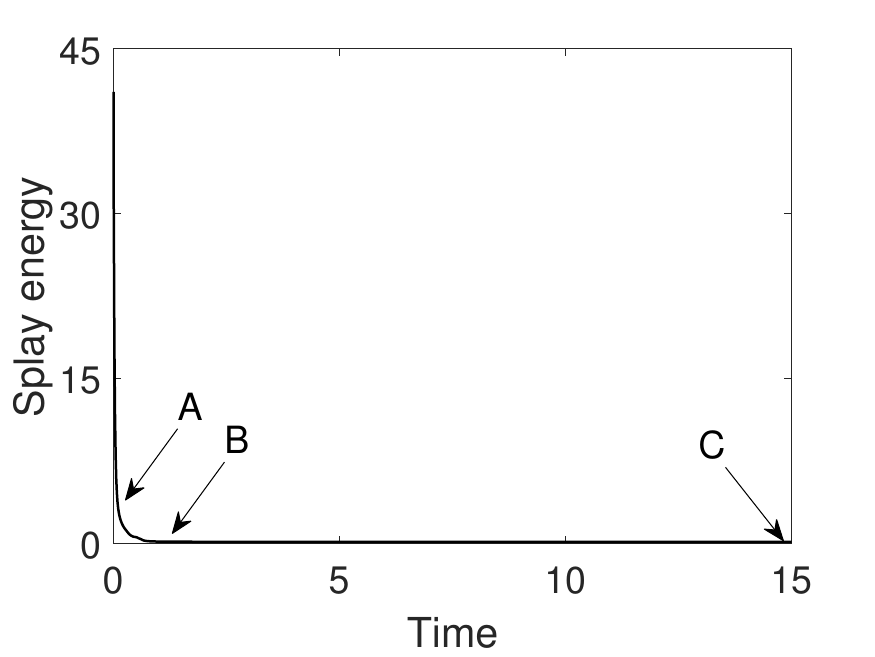}}
	\subfigure[Twist energy vs Time]{\includegraphics[scale=.34]{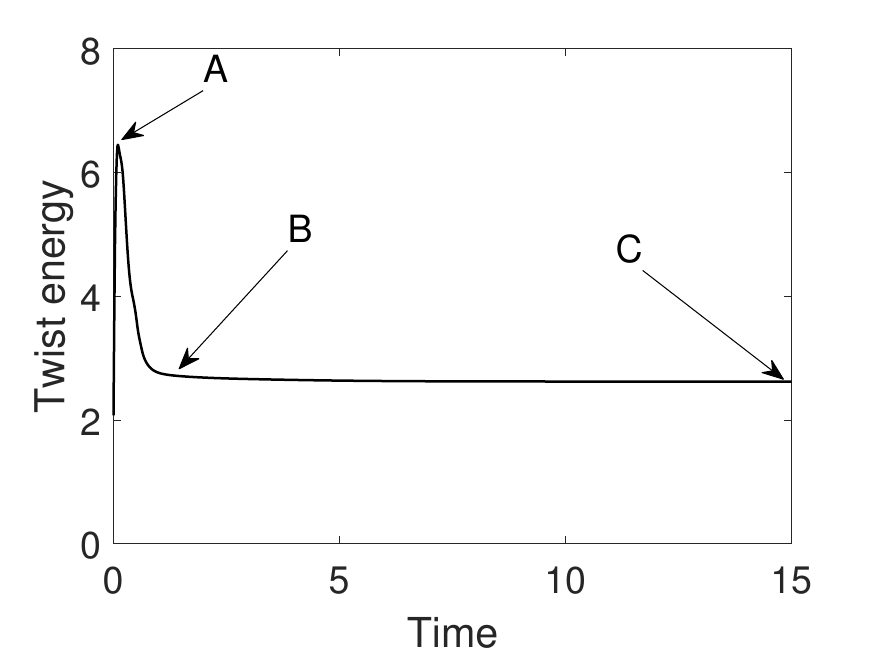}}
	\subfigure[Bend energy vs Time]{\includegraphics[scale=.34]{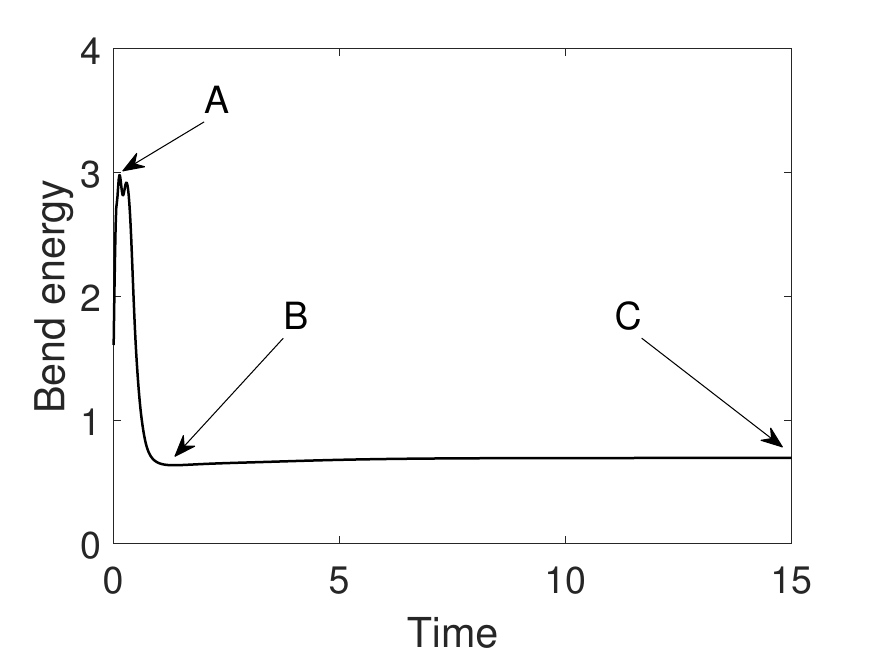}}
   \caption{\small Histories of the three discrete elastic components for the splay-dominated case $(\kappa_1,\kappa_2,\kappa_3)=(2.5,0.1,0.1)$ with initial data \eqref{eq: init-v-1}-\eqref{eq: ncase2}: (a) splay component; (b) twist component; (c) bend component.}
	 \label{figs: anicase1energy}
\end{center}
\end{figure}
\begin{figure}[tbp]
\begin{center}
	\subfigure[$\bs n$ at t=0.1]{ \includegraphics[scale=.34]{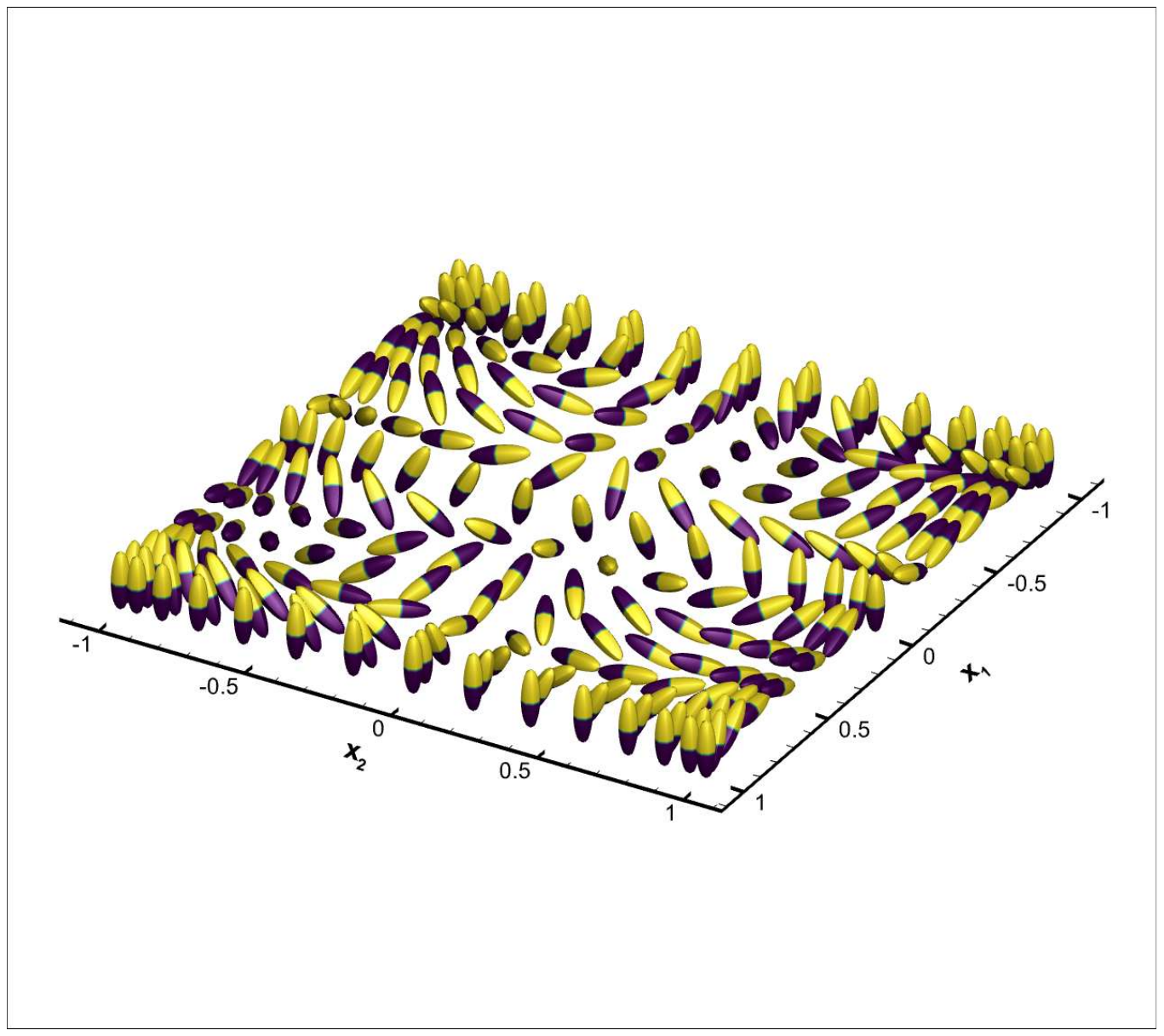}}	
	\subfigure[$\bs v$ at t=0.1]{ \includegraphics[scale=.34]{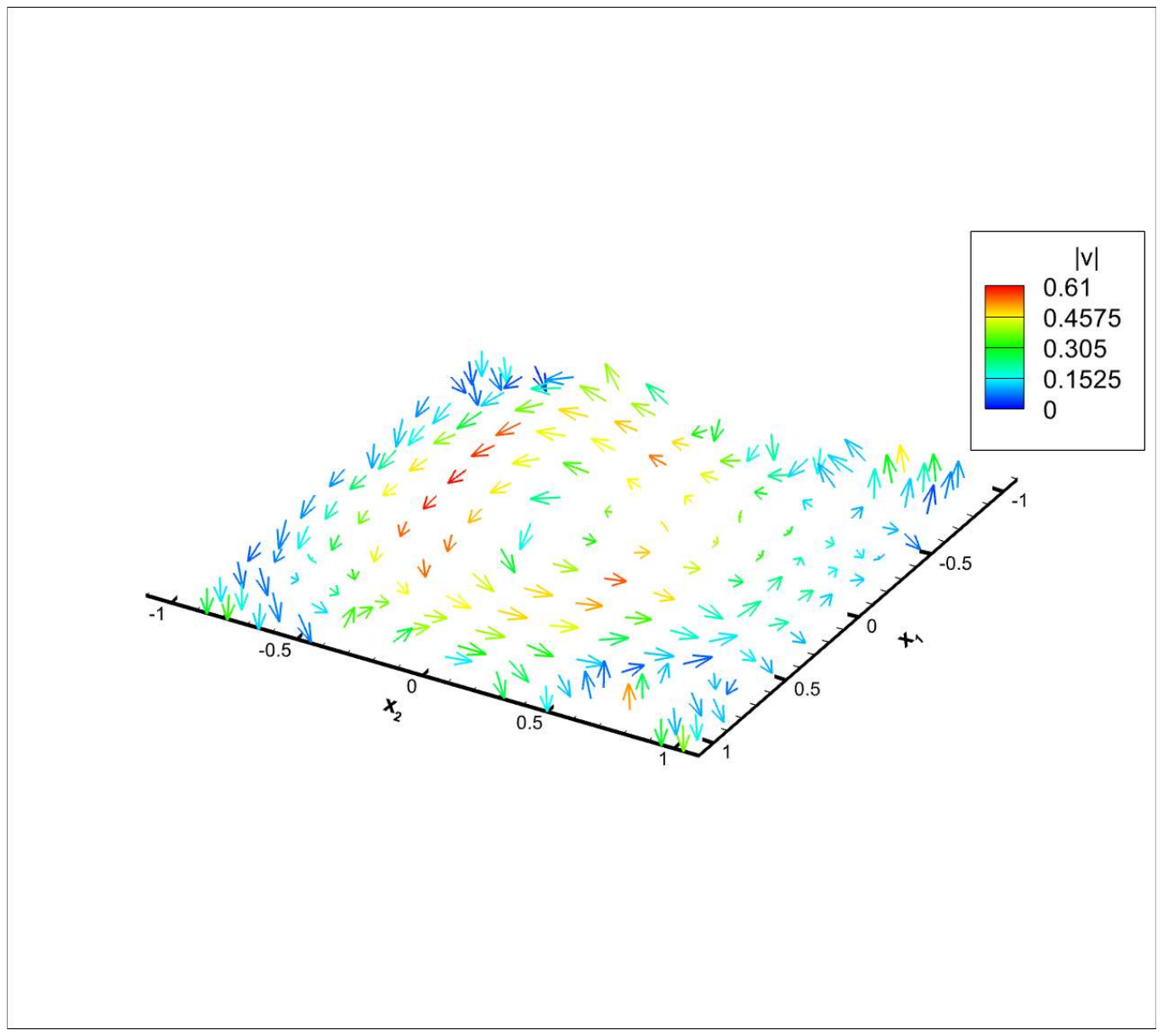}}	\\
	\subfigure[$\bs n$ at t=1.2]{ \includegraphics[scale=.34]{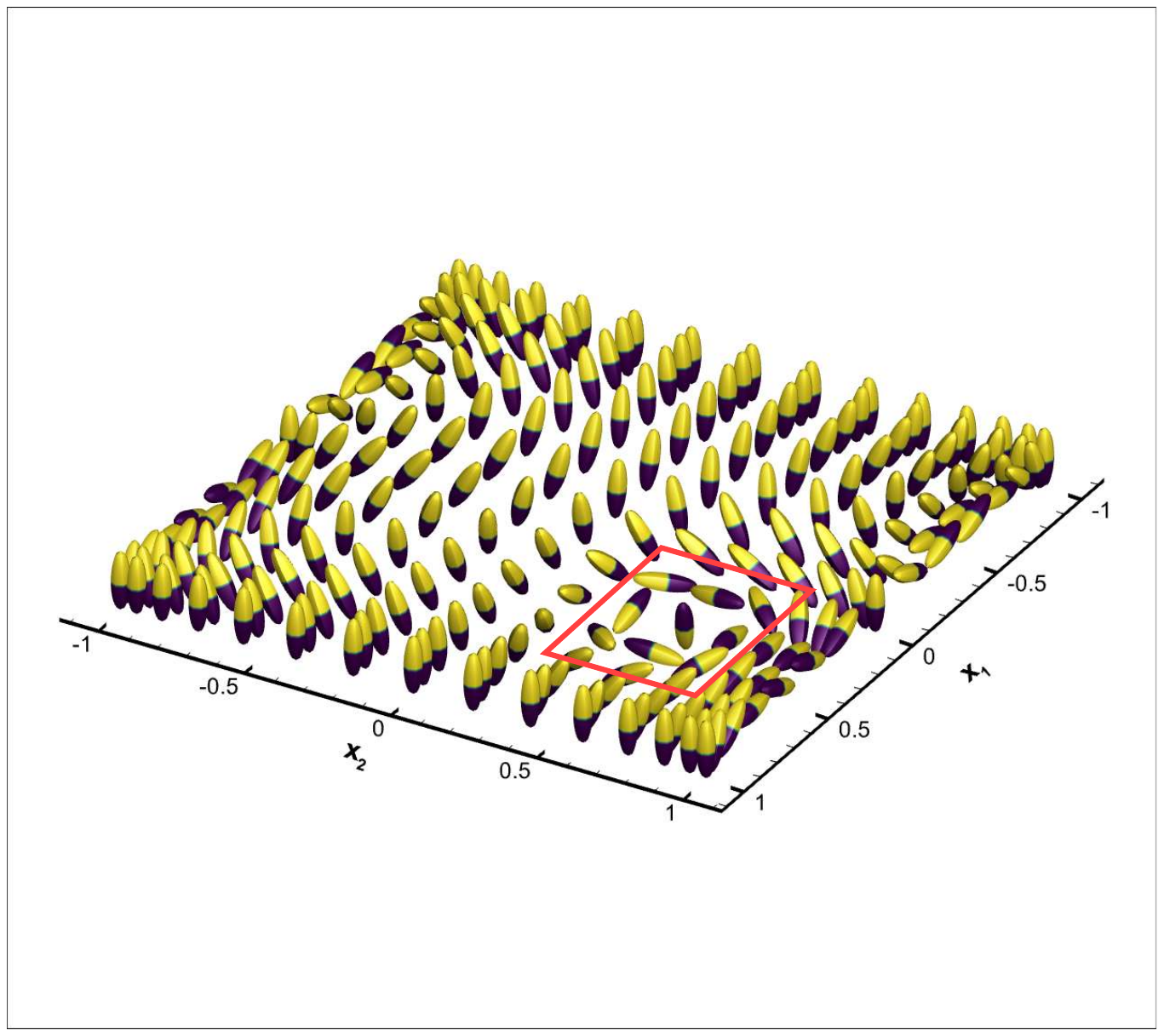}}	
	\subfigure[$\bs v$ at t=1.2]{ \includegraphics[scale=.34]{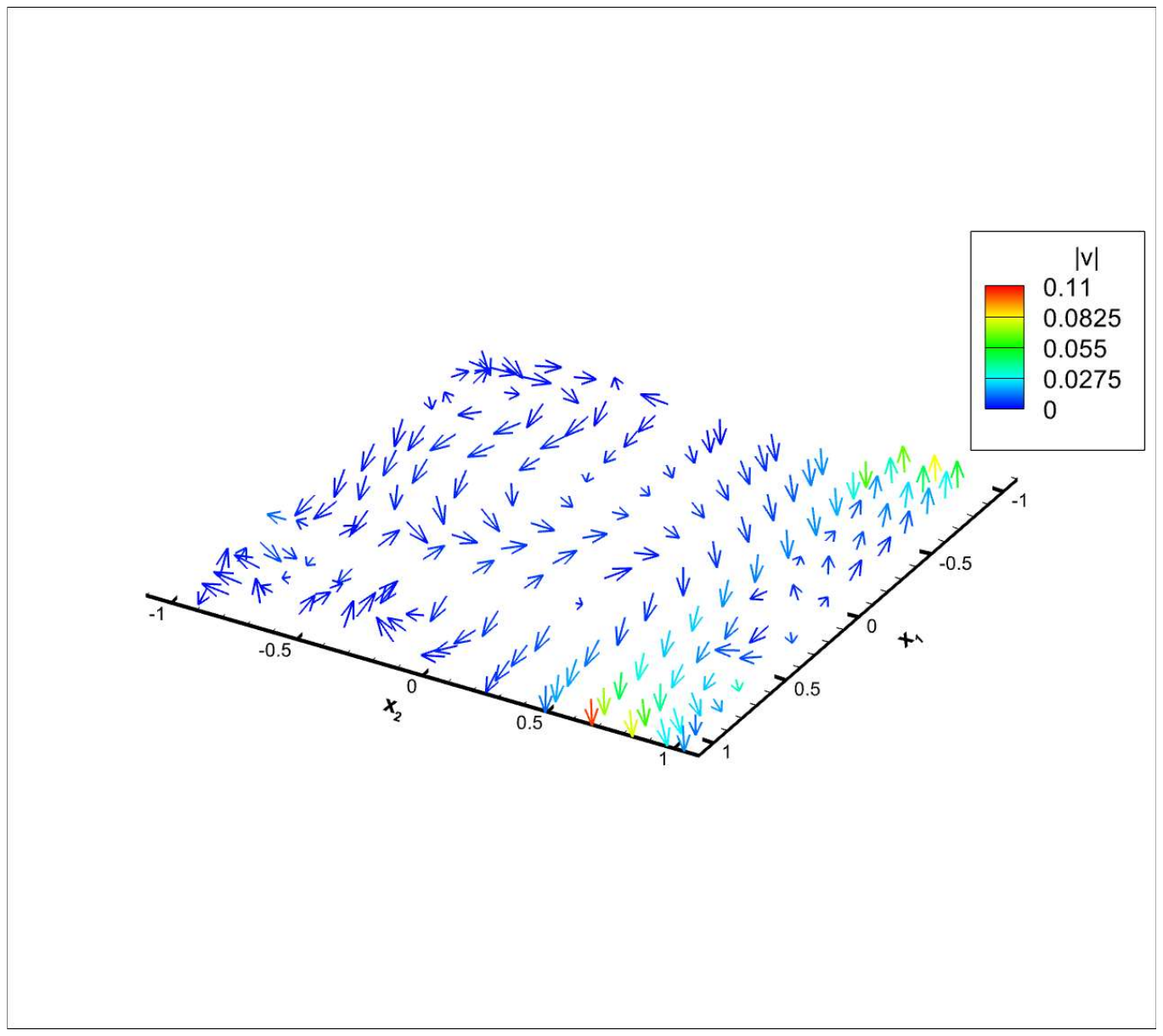}}	\\
	\subfigure[$\bs n$ at t=15]{ \includegraphics[scale=.34]{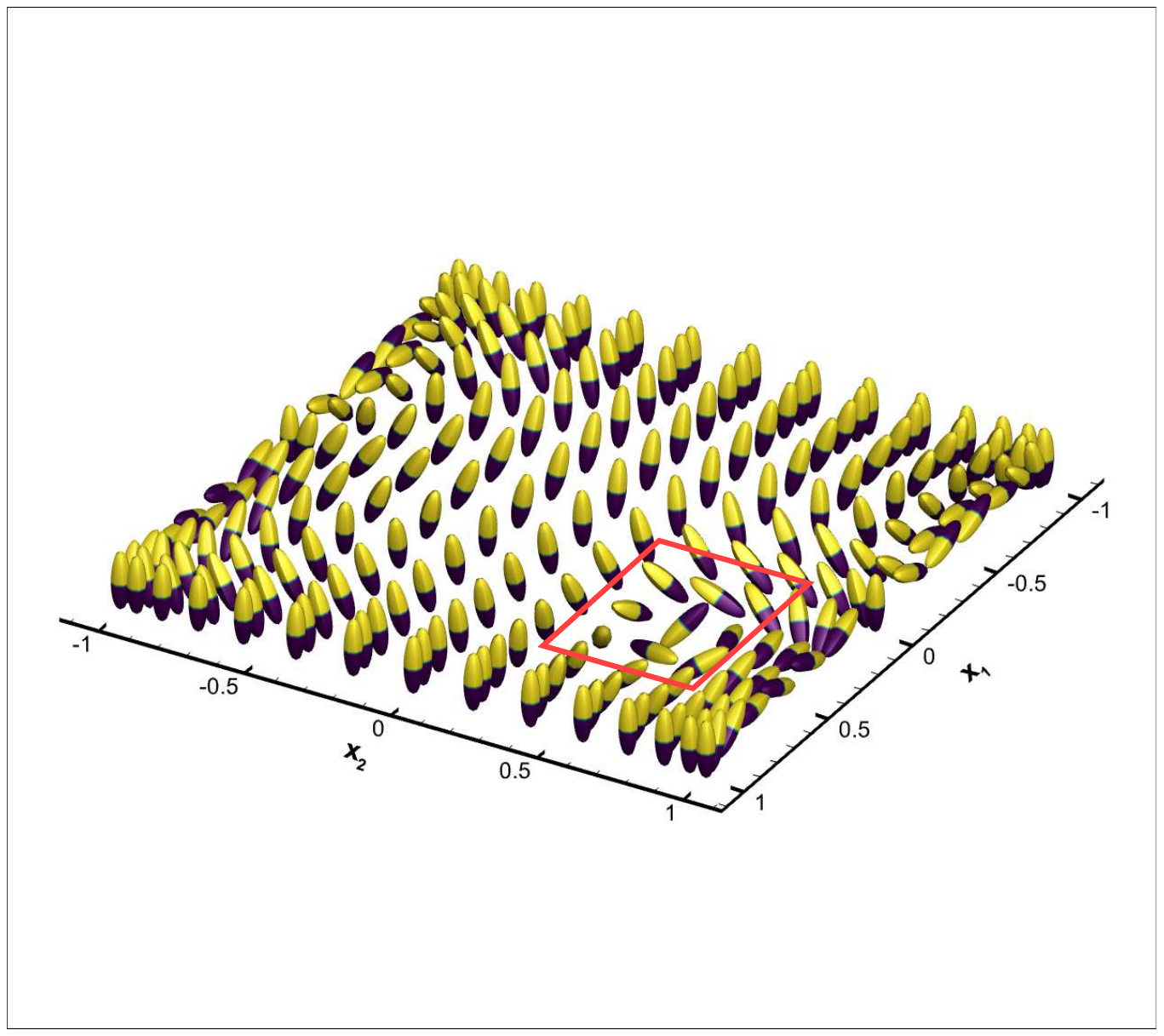}}	
	\subfigure[$\bs v$ at t=15]{ \includegraphics[scale=.34]{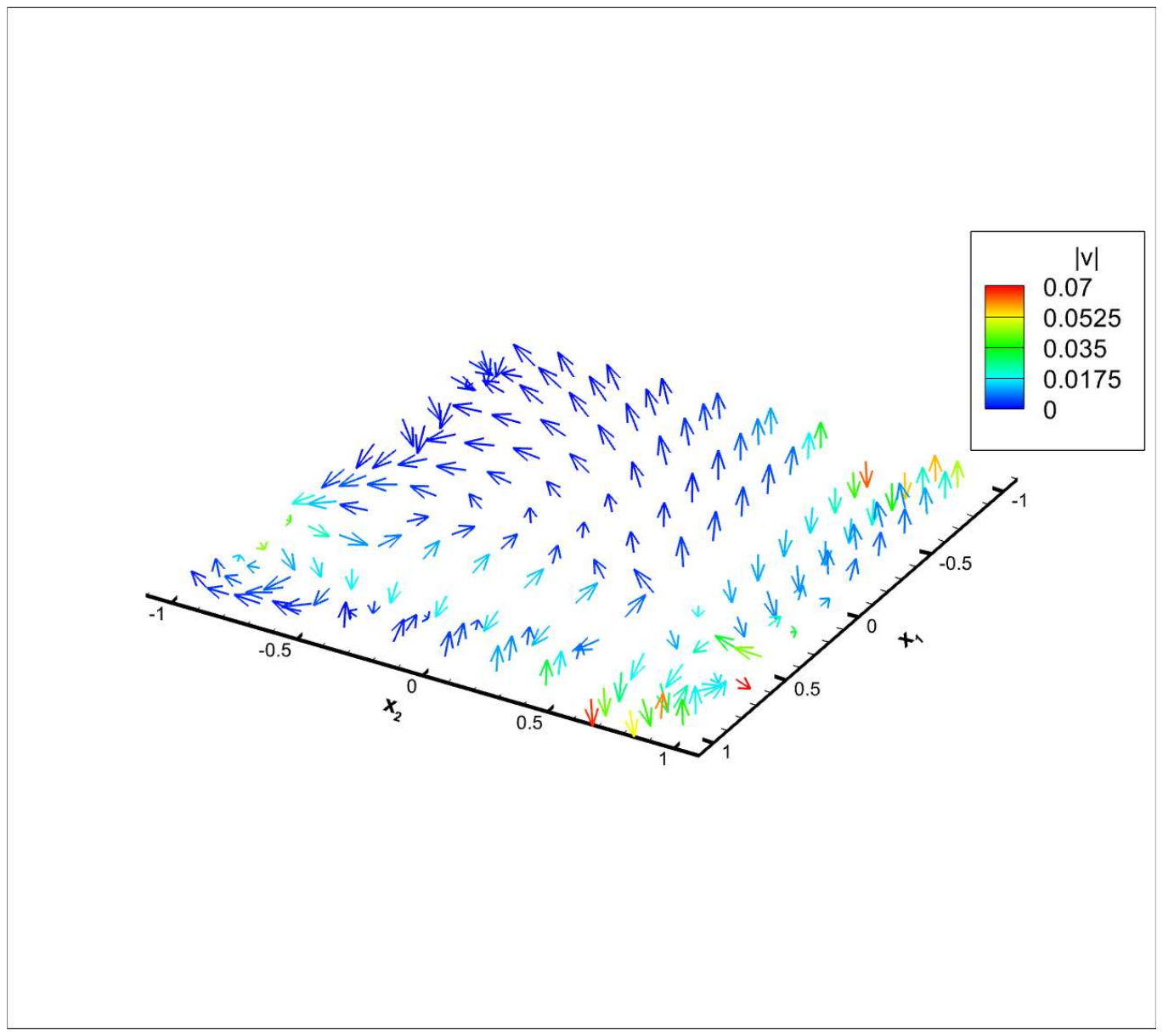}}	
   \caption{\small Representative time snapshots for the splay-dominated case $(\kappa_1,\kappa_2,\kappa_3)=(2.5,0.1,0.1)$ with initial data \eqref{eq: init-v-1}-\eqref{eq: ncase2}. Director field at (a) $t=0.1$, (c) $t=1.2$, and (e) $t=15$; velocity field at (b) $t=0.1$, (d) $t=1.2$, and (f) $t=15$.}
	\label{figs: anicase1}
\end{center}
\end{figure}

To further illustrate the effect of anisotropy, we show in Figure~\ref{figs: anicase_stable} the final states for the twist-dominated and bend-dominated cases. When $(\kappa_1,\kappa_2,\kappa_3)=(0.1,\,2.5,\,0.1),$
the director field becomes nearly uniform along the $x_2$-direction in the highlighted region of Figure~\ref{figs: anicase_stable}(a), which is consistent with the suppression of twist-dominated distortion. The corresponding velocity field in Figure~\ref{figs: anicase_stable}(b) appears more disordered in orientation, but its magnitude is already very small in the central part of the domain. While for the bend-dominated case
$
(\kappa_1,\kappa_2,\kappa_3)=(0.1,\,0.1,\,2.5),
$
the terminal pattern, shown in Figure~\ref{figs: anicase_stable}(c)-(d), is closer to the isotropic reference case: the director is again nearly homogeneous in the interior, but the aligned region is visibly larger, reflecting the tendency of a large bend coefficient to penalize curvature-dominated distortions.
\begin{figure}[tbp]
\centering
    \subfigure[$\bs n$ at $t=15$ with $(\kappa_1,\;\kappa_2,\;\kappa_3)=(0.1,\;2.5,\;0.1)$]{\includegraphics[scale=0.34]{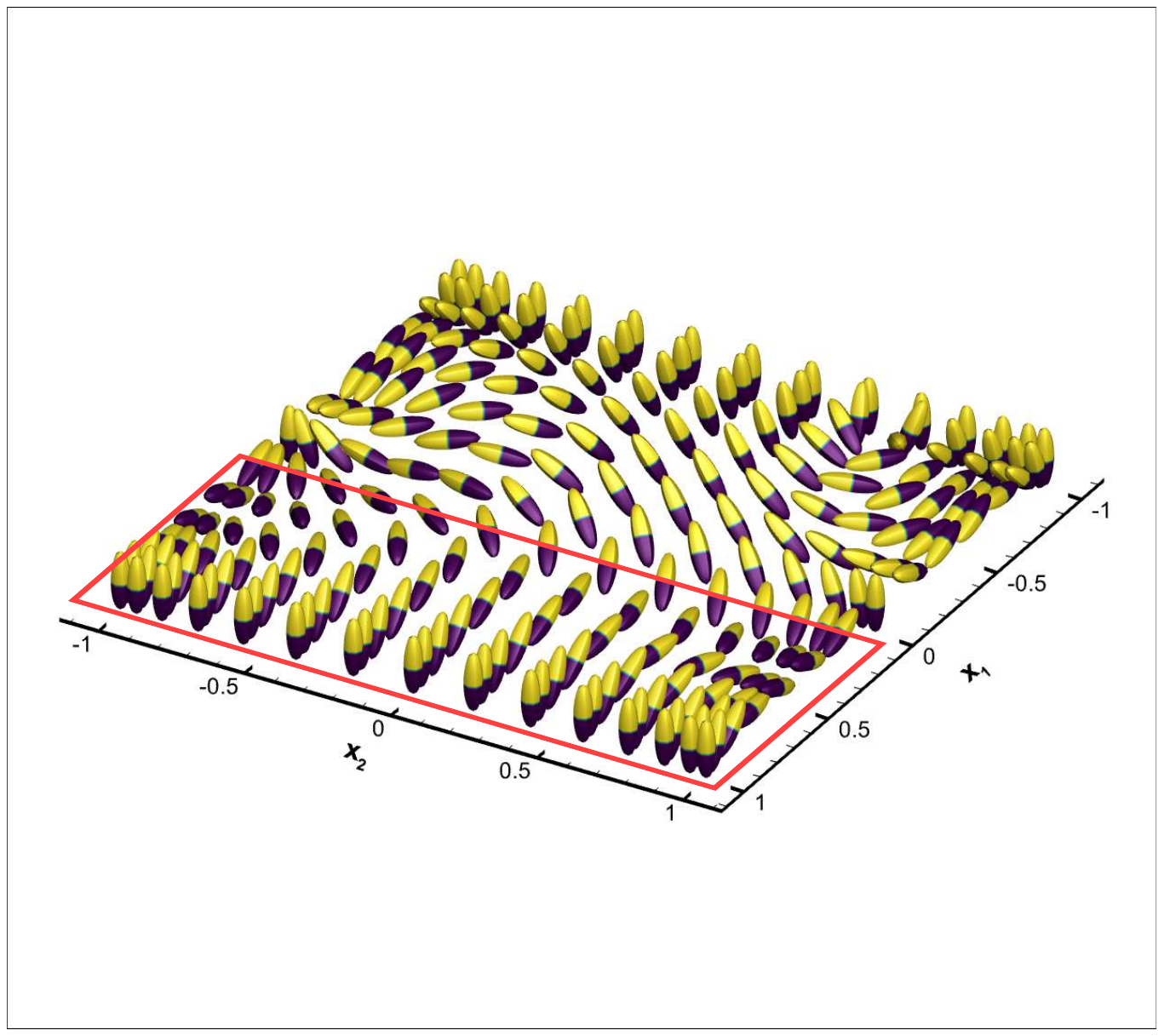} }
    \subfigure[$\bs v$ at $t=15$ with $(\kappa_1,\;\kappa_2,\;\kappa_3)=(0.1,\;2.5,\;0.1)$]{\includegraphics[scale=0.34]{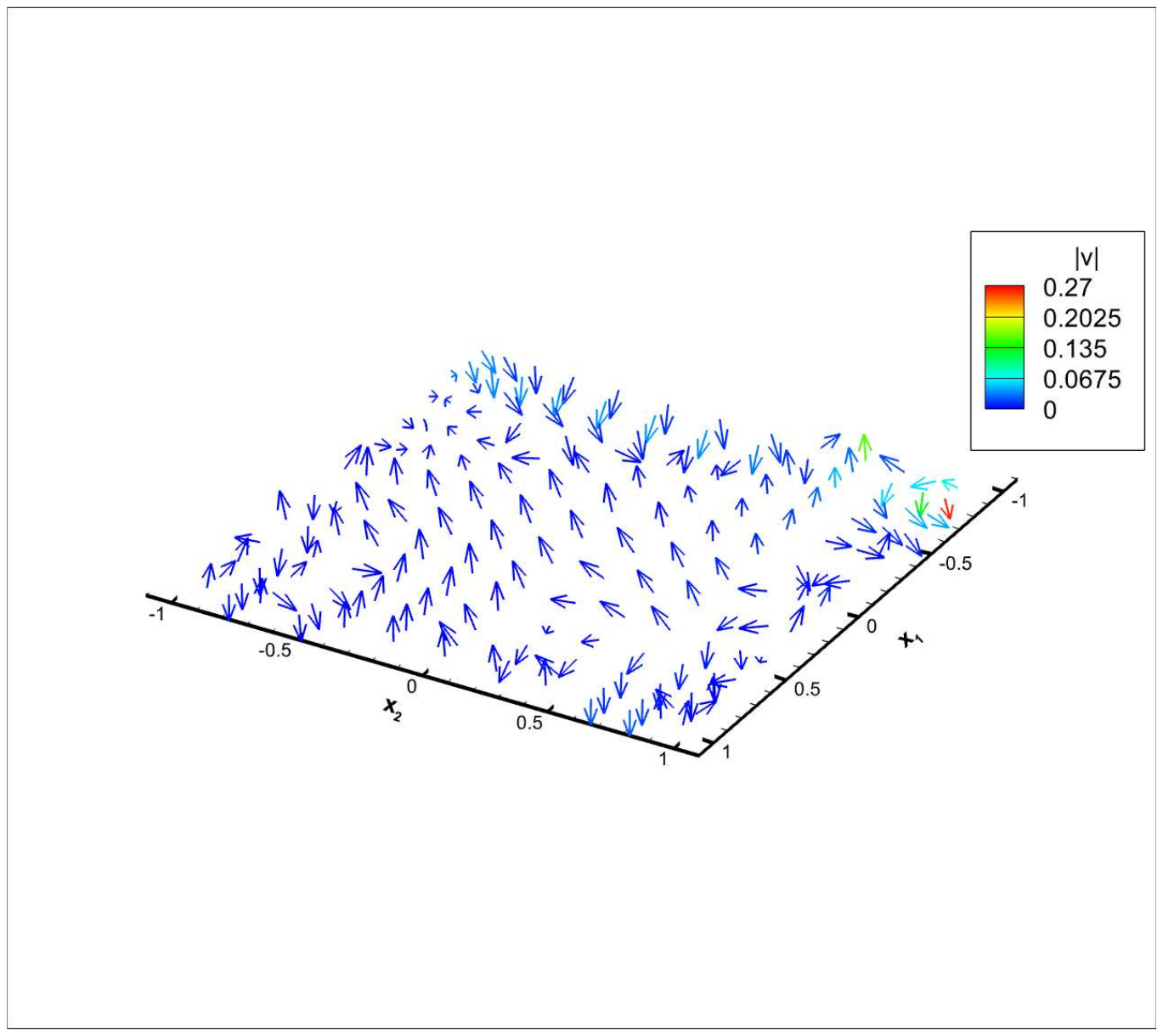} } \\
    \subfigure[$\bs n$ at $t=15$ with $(\kappa_1,\;\kappa_2,\;\kappa_3)=(0.1,\;0.1,\;2.5)$]{\includegraphics[scale=0.34]{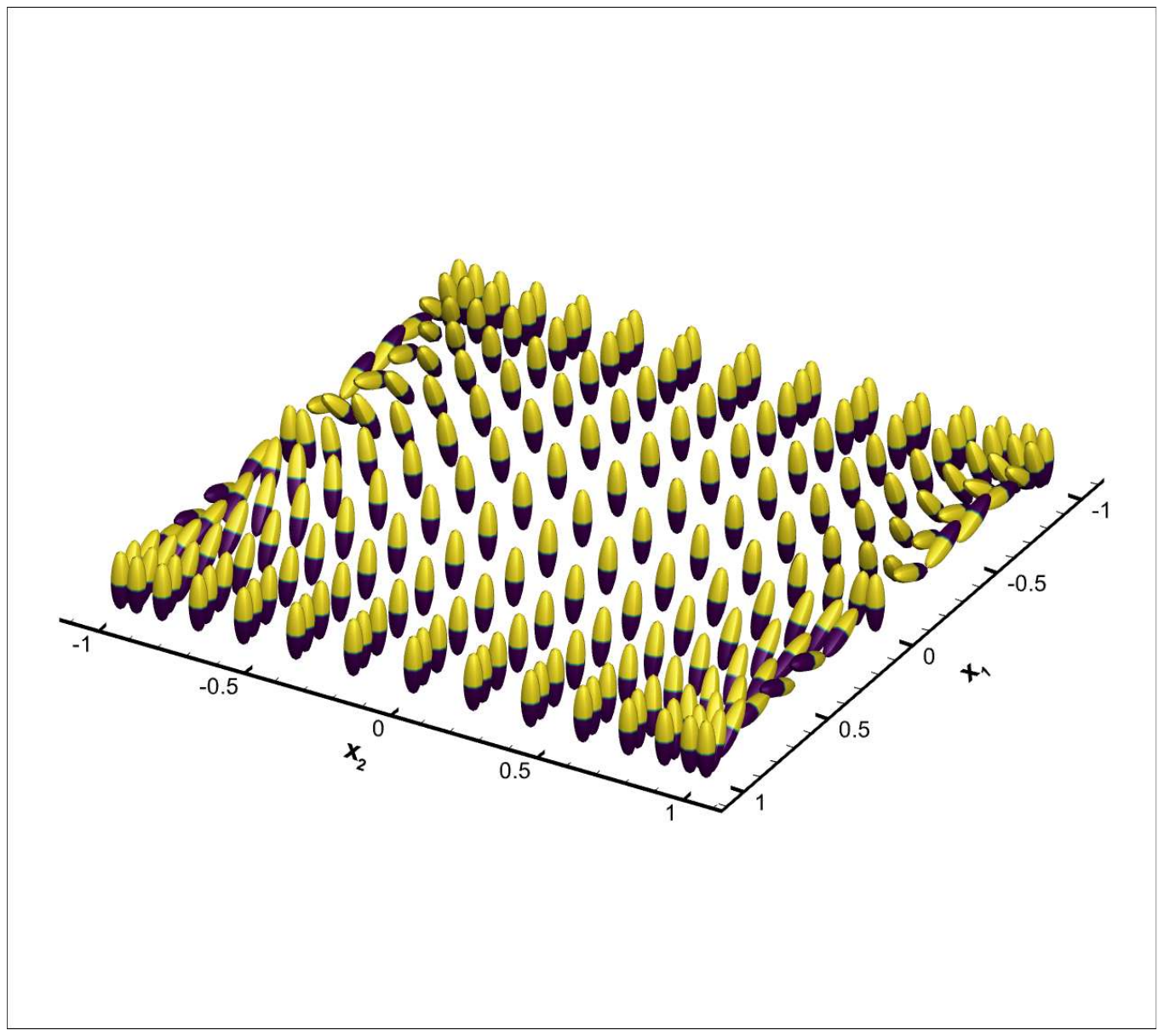} }
    \subfigure[$\bs v$ at $t=15$ with $(\kappa_1,\;\kappa_2,\;\kappa_3)=(0.1,\;0.1,\;2.5)$]{\includegraphics[scale=0.34]{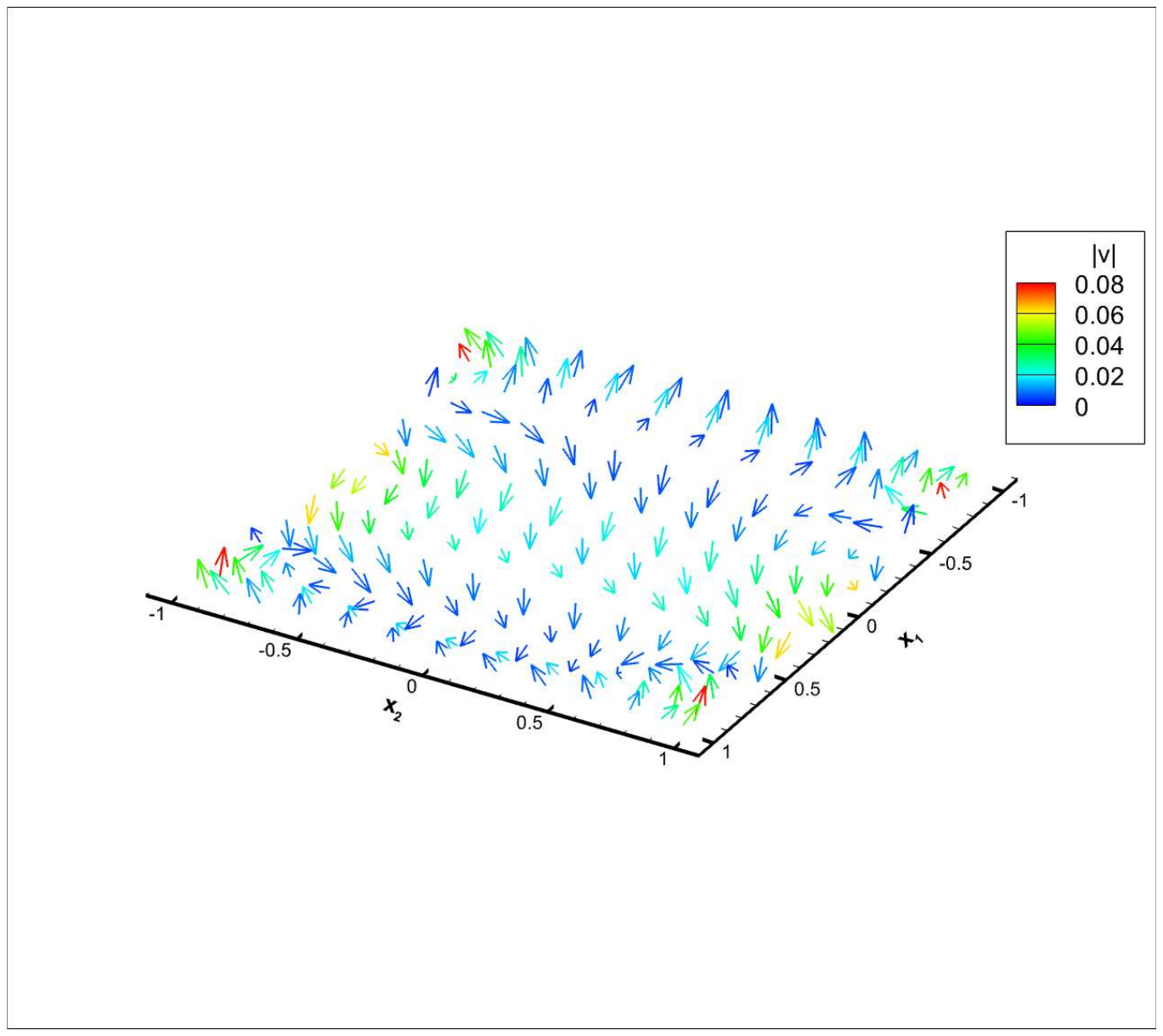} }

    \caption{The representative time snapshots of director field $\bs n$ and velocity field $\bs v$ for two anisotropic cases with initial data \eqref{eq: init-v-1}-\eqref{eq: ncase2}. (a) Director field and (b) velocity field at $t=15$ for $(\kappa_1,\kappa_2,\kappa_3)=(0.1,2.5,0.1)$; (c) director field and (d) velocity field at $t=15$ for $(\kappa_1,\kappa_2,\kappa_3)=(0.1,0.1,2.5)$.
}
    \label{figs: anicase_stable}
\end{figure}

Overall, these experiments show that changing the elastic coefficients does not merely alter the decay rate. Rather, it changes both the evolutionary pathways and steady distributions. This demonstrates that anisotropic Oseen--Frank elasticity plays a decisive role in the dynamics of the full EL model. At the same time, these results highlight the importance of numerical methods that can faithfully resolve general anisotropic elastic energies, rather than relying on one-constant approximation or reduced constitutive approximations.

\subsection{Dynamics under  shear flow}\label{sect: sf}
In this subsection, we investigate the director field reorientation driven by shear flow in the full EL model and the resulting flow-director coupling effects. We keep the elastic coefficients $(\kappa_1,\; \kappa_2,\;\kappa_3)=(0.1,\;0.5,\;2.5)$ and initial director field distribution \eqref{eq: ncase2}. We consider two shear-flow initial profiles, i.e. (i) weak shear flow $\bs v_0=\big(\sin(\pi x_2),\;0,\;0\big)^{\intercal}$ and (ii) strong shear flow $\bs v_0=\big(10\sin(\pi x_2),\;0,\;0\big)^{\intercal}$. We summarize these numerical results in Figure~\ref{figs: shearcase}. 
\begin{figure}[tbp]
\begin{center}
	\subfigure[Total energy vs Time with $\bs v_0=(\sin(\pi x_2),0,0)$]{ \includegraphics[scale=.46]{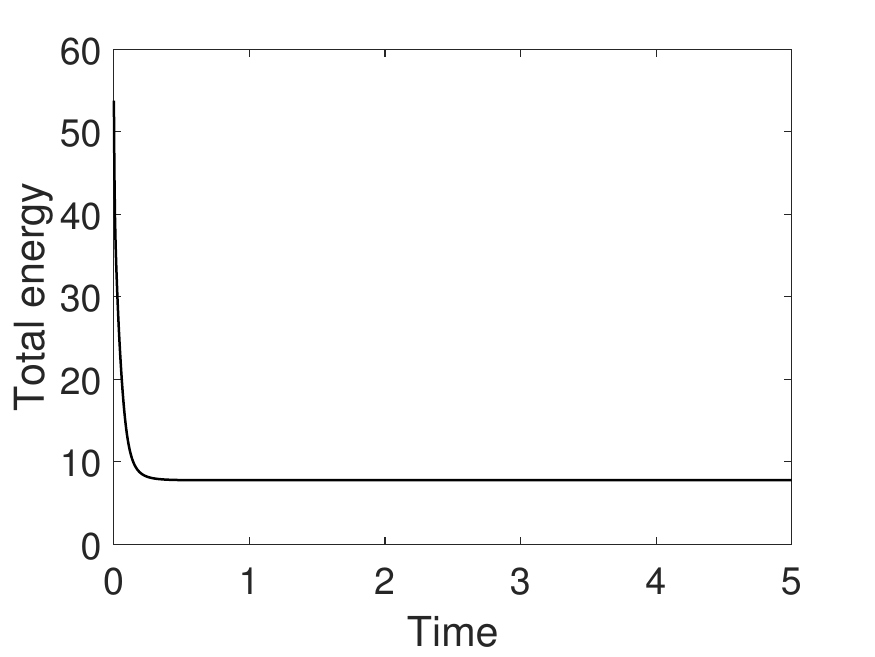}}
	\subfigure[Total energy vs Time with $\bs v_0=(10\sin(\pi x_2),0,0)$]{ \includegraphics[scale=.46]{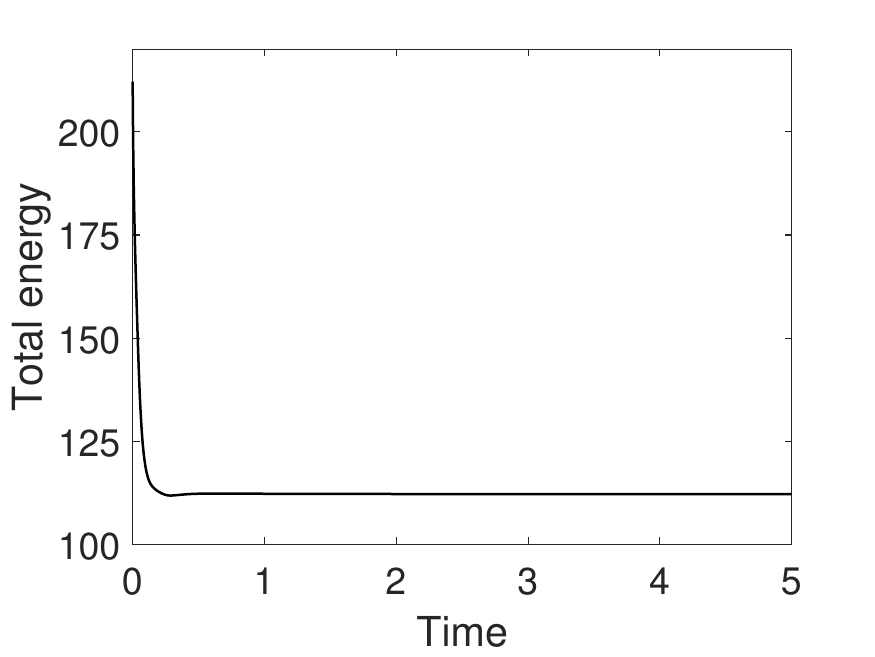}}\\
	\subfigure[$\bs n$ at t=5 with $\bs v_0=(\sin(\pi x_2),0,0)$]{ \includegraphics[scale=.34]{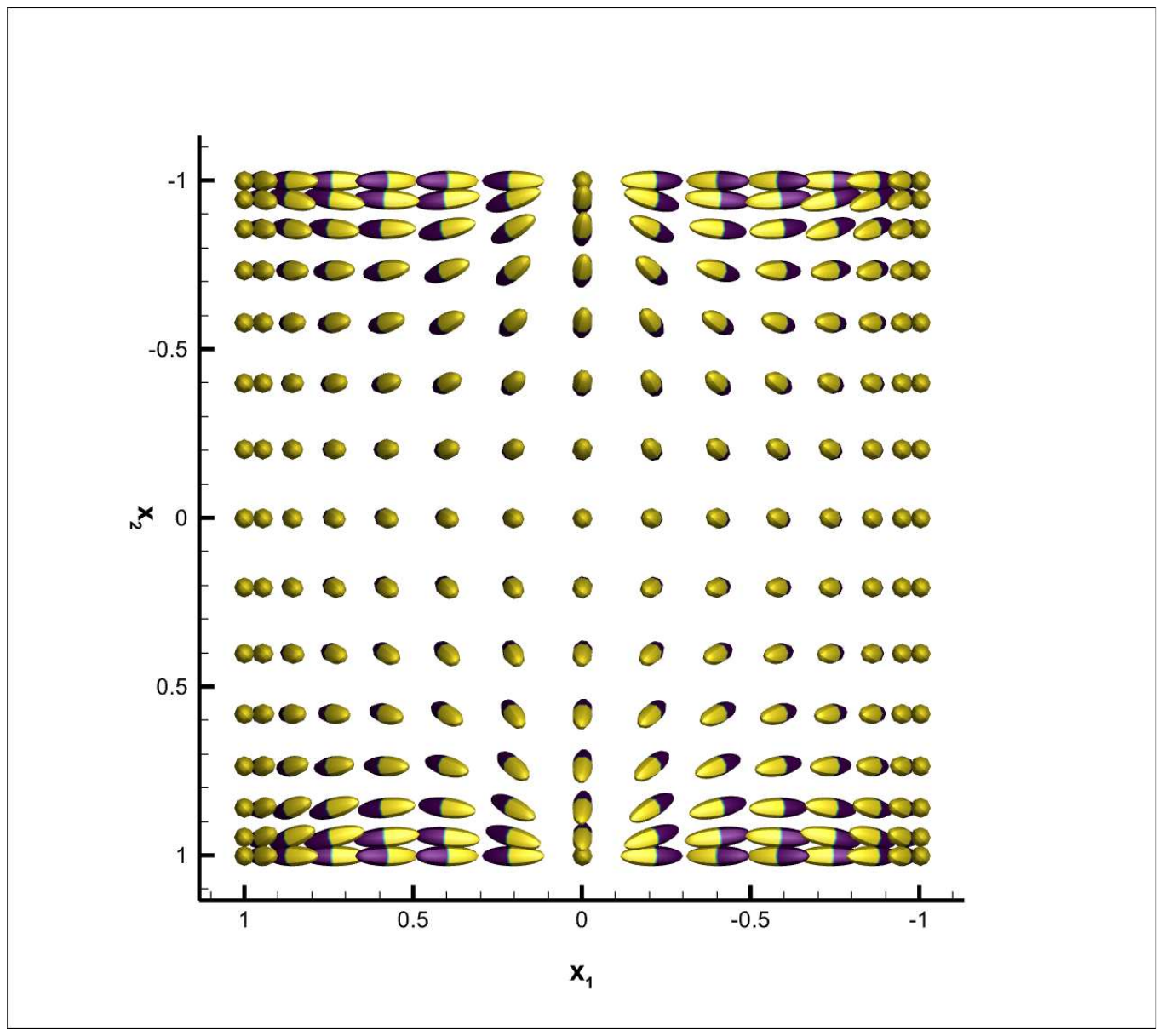}}
	\subfigure[$\bs v$ at t=5 with $\bs v_0=(\sin(\pi x_2),0,0)$]{ \includegraphics[scale=.38]{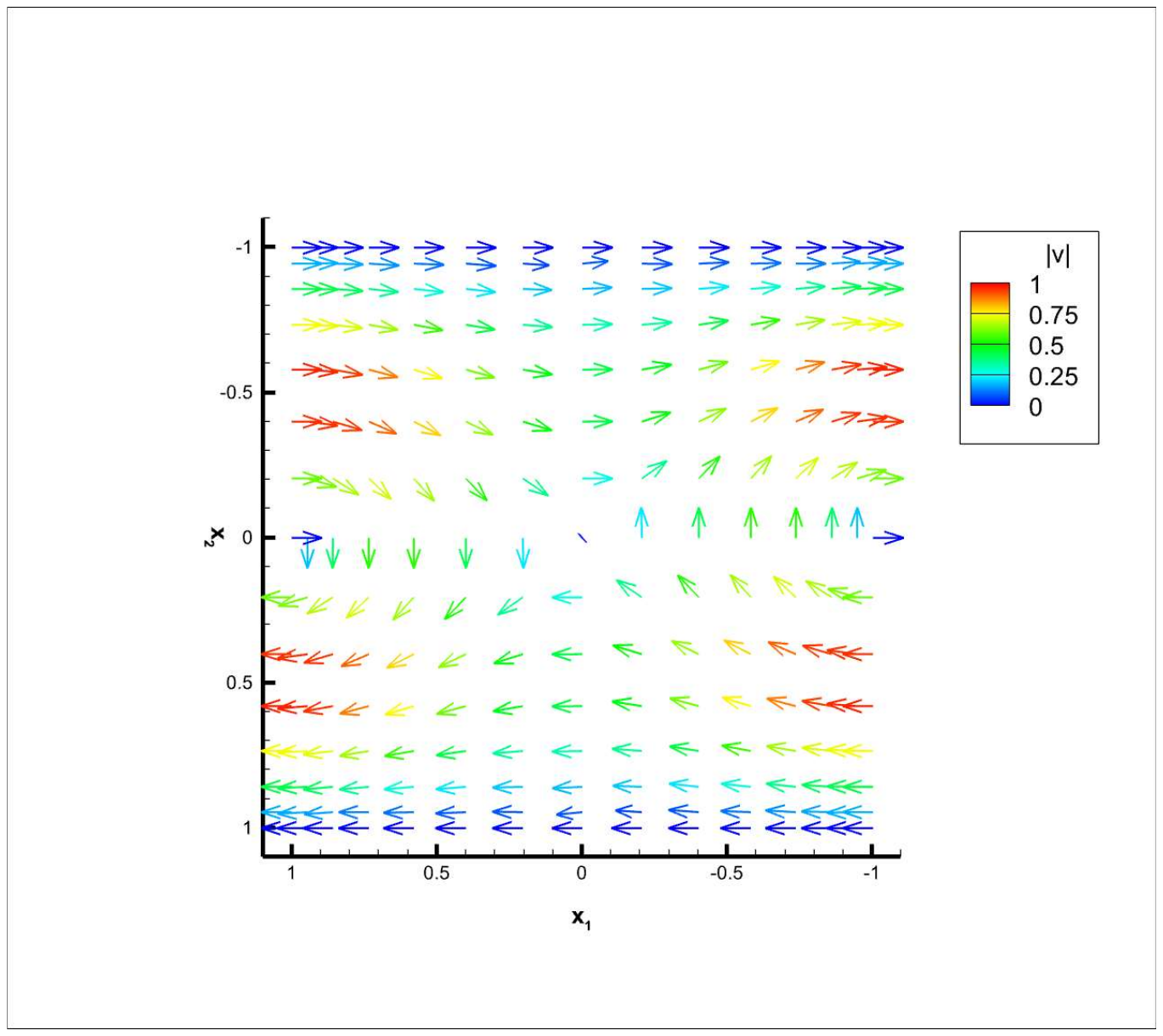}}\\
	\subfigure[$\bs n$ at t=5 with $\bs v_0=(10\sin(\pi x_2),0,0)$]{ \includegraphics[scale=.32]{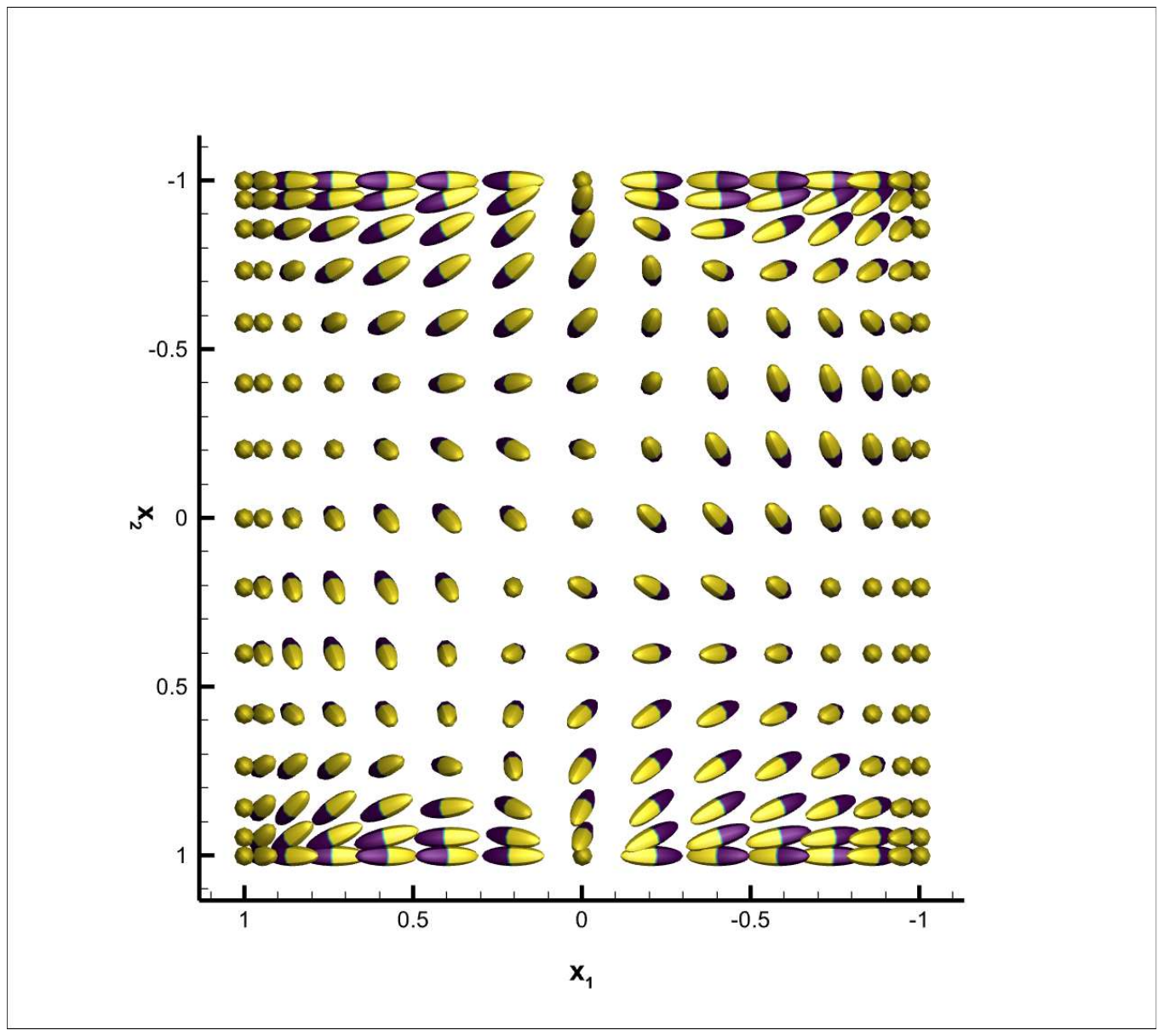}}\;\;
	\subfigure[$\bs v$ at t=5 with $\bs v_0=(10\sin(\pi x_2),0,0)$]{ \includegraphics[scale=.38]{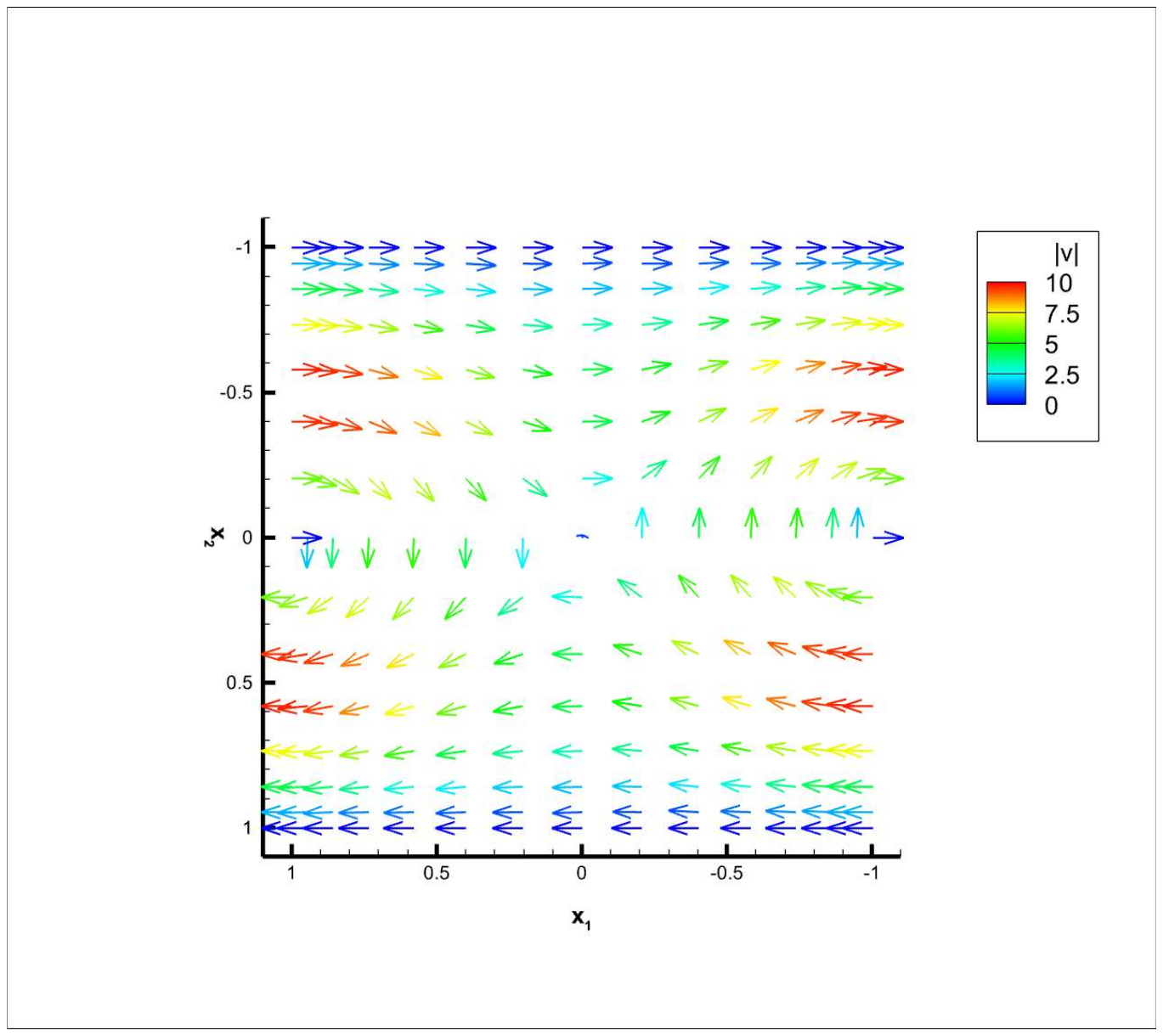}}\\
	\caption{\small Evolution of the director field $\bs n$ and velocity field $\bs v$ under shear flows with initial director field \eqref{eq: ncase2} and elastic coefficients $(\kappa_1,\kappa_2,\kappa_3)=(0.1,0.5,2.5)$. (a) History of the discrete energy $E_N^m$ for $\bs v_0=(\sin(\pi x_2),0,0)$; (b) history of $E_N^m$ for $\bs v_0=(10\sin(\pi x_2),0,0)$; (c) director field and (d) velocity field at $t=5$ for $\bs v_0=(\sin(\pi x_2),0,0)$; (e) director field and (f) velocity field at $t=5$ for $\bs v_0=(10\sin(\pi x_2),0,0)$.}
	 \label{figs: shearcase}
\end{center}
\end{figure}

It is worthwhile to point out that since inhomogeneous Dirichlet boundary conditions are imposed on the velocity field in these experiments, the discrete energy law from Theorem~\ref{thm: fully-discrete-structure} is no longer expected to hold.  Accordingly, the quantity $E_N^m$ is used here only as a diagnostic quantity of the dynamics.
As seen in Figure~\ref{figs: shearcase}(a)-(b), the discrete total energy in both cases undergoes a short transient and then becomes nearly constant by $t=5$.

A comparison of the final states reveals a clear dependence on the shear strength.
For the weaker shear flow, the director field in Figure~\ref{figs: shearcase}(c) remains relatively homogeneous in the interior, and the flow mainly induces a mild reorientation near the top and bottom portions of the domain. For the stronger shear flow, the velocity field in Figure~\ref{figs: shearcase}(f) has essentially the same directional pattern as in Figure~\ref{figs: shearcase}(d), but the director field in Figure~\ref{figs: shearcase}(e) is much more strongly tilted and exhibits a pronounced tendency to align with the imposed horizontal flow over a substantial portion of the domain. Hence, even when the qualitative geometry of the shear flow is unchanged, increasing its amplitude can lead to a markedly different director response.

These experiments show that the Leslie-stress coupling can strongly influence shear-induced reorientation in the full EL model. In particular, the competition between elastic relaxation and externally imposed shear may lead to qualitatively different alignment regimes, which would be difficult to capture reliably within reduced constitutive settings.  From the numerical viewpoint, these results also highlight the importance of structure-preserving methods formulated directly for the full Ericksen--Leslie system. In conclusion, the proposed Rdg method provides an effective structure-preserving numerical scheme for accurately simulating the dynamical effects induced by anisotropic elasticity while preserving the intrinsic unit-length constraint and energy dissipative structures of the full Ericksen--Leslie system.

\section{Concluding remarks}

We have developed a structure-preserving rotational discrete gradient scheme for the full Ericksen--Leslie model with general anisotropic Oseen--Frank elasticity and full Leslie stress. The central ingredient is an equivalent rotational reformulation of the EL model, in which the unit-length geometry of the director evolution and the energy-exchange structure of the coupled system become explicit and can therefore be embedded directly into the discretization. Based on this reformulation, we constructed second-order rotational discrete-gradient schemes that preserve the director length and satisfy a discrete energy law at the time-discrete level, together with a fully discrete exact divergence-free spectral scheme in which a quadrature-compatible LGL nodal discretization is used to preserve the unit-length constraint at the quadrature points, while a discrete energy law is obtained under the homogeneous velocity boundary condition considered in Theorem~\ref{thm: fully-discrete-structure}. The numerical experiments confirm the expected second-order temporal accuracy and spectral spatial convergence, and further illustrate several qualitative effects induced by anisotropic elastic coefficients and shear flow. These results show that the proposed method provides a reliable simulation tool of anisotropic liquid--crystal flows for the full Ericksen--Leslie model beyond the one-constant and reduced-stress settings.

\appendix
\renewcommand{\theequation}{A.\arabic{equation}}
\section{Rdg scheme based on projection method}\label{app: pro}

For spatial discretizations in which the divergence-free constraint is not enforced exactly, one can adopt the following projection-type Rdg variant of the time discretization in Section~\ref{sect: 3}. 
\vspace{5pt}
\paragraph{\textbf{Projection-type Rdg time-discrete scheme}.} \label{sch:projection}
For $m\geq 1$, given $(\bs v^{m-1},\bs v^m, P^m,\bs n^m)$ with $\nabla\cdot \bs v^{m-1}=\nabla\cdot \bs v^{m}=0$ and $|\bs n^m|=1$, and fixed the time step $\tau_m=\tau$, one computes successively the intermediate velocity $\widetilde{\bs v}^{m+1}$ and the director $\bs n^{m+1}$ from
\begin{subequations}\label{eq: projectionsch}
\begin{align}
	\begin{split}
		\frac{\widetilde{\bs v}^{m+1}-\bs v^m}{\tau}
&=
-\bar{\bs v}^{m+\frac12}\cdot \nabla \widetilde{\bs v}^{m+\frac12}
-\nabla P^m
+\frac{\gamma}{\rm Re}\Delta \widetilde{\bs v}^{m+\frac12}
+\frac{1-\gamma}{\rm Re}\nabla\cdot \widehat{\sig}^{\rm L}\big|^{\sim,m+\frac12}
\\
&\quad
+\frac{1-\gamma}{\rm Re}\nabla \bs n^{m+\frac12}\cdot
\Big(
\big(
\bs n^{m+\frac12}\times D_{\mathcal F}^O(\bs n)\big|^{m+\frac12}
\big)\times \bs n^{m+\frac12}
\Big),
	\end{split} \label{eq: projschemp1}\\
	\begin{split}
		\frac{\bs n^{m+1}-\bs n^m}{\tau}
&=
-\frac{1}{\gamma_1}
\Big(
\bs n^{m+\frac12}\times
\Big[
D_{\mathcal F}^O(\bs n)\big|^{m+\frac12}
+\gamma_1\Big(
\widetilde{\bs v}^{m+\frac12}\cdot \nabla \bs n^{m+\frac12}
+\widetilde{\om}^{m+\frac12}\cdot \bs n^{m+\frac12}
\Big)
\\
&\qquad\qquad\qquad\qquad
+\gamma_2\widetilde{\tt}^{m+\frac12}\cdot \bs n^{m+\frac12}
\Big]
\Big)\times \bs n^{m+\frac12}, 
	\end{split}\label{eq: projschemp2}
\end{align}
\end{subequations}
and then updates $(\bs v^{m+1},P^{m+1})$ by the pressure-correction step
\begin{subequations}\label{eq:sch2pre}
\begin{align}
\frac{\bs v^{m+1}-\widetilde{\bs v}^{m+1}}{\tau}
+\frac{\nabla(P^{m+1}-P^m)}{2}&=\bs 0, \label{eq:sch2pre1}\\
\nabla\cdot \bs v^{m+1}&=0. \label{eq:sch2pre2}
\end{align}
\end{subequations}
In the above, $\widetilde{\bs v}^{m+\frac{1}{2}}=\big(\widetilde{\bs v}^{m+1}+\bs v^m\big)/2$, and $\bar{\bs v}^{m+1/2}$ represents the 2nd-order explicit approximation of $\bs v^{m+1/2}$ given by $\bar{\bs v}^{m+1/2}= \big(3\bs v^m-\bs v^{m-1}\big)/2$, where $\bs v^1$ can be obtained by a first-order projection scheme. Moreover, we define $\widehat \sig^{\mathrm{L}}\big|^{\sim,m+\frac{1}{2}}$ by
\eqref{eq: main-discrete-sigmaL}, with $\tt^{m+\frac{1}{2}}$ replaced by
$\widetilde{\tt}^{m+\frac{1}{2}}$.

\begin{thm}\label{thm: property-pre}
Assume that $|\bs n^0|=1$ and $\nabla\cdot \bs v^0=\nabla\cdot \bs v^1=0$ and the material parameters satisfy \eqref{eq: paracons} with $\gamma\in(0,1)$,  impose the boundary conditions either $
\bs v^m|_{\partial\Omega}=\bs 0,
\;
\bs n^m|_{\partial\Omega}=\bs g(\bs x),
\; m\ge 0,
$ or periodic boundary conditions,  and let $(\bs n^{m+1},\widetilde{\bs v}^{m+1},\bs v^{m+1},P^{m+1})$ be computed by the proposed projection-based Rdg scheme, then  for every $m\ge 1$ the following properties hold.

\smallskip
\noindent
(i) \textbf{Unit-length preservation:}  
\begin{equation}\label{eq: lengnprojection}
|\bs n^{m+1}|=1.
\end{equation}

\smallskip
\noindent
(ii) \textbf{Modified discrete energy dissipation:} 
\begin{equation}\label{eq: dissproj}
\begin{aligned}
& \dfrac{\rm Re}{2(1-\gamma)}
\|\bs v^{m+1}\|^2 +\dfrac{\tau^2 {\rm Re}}{8(1-\gamma)}
\|\nabla P^{m+1}\|^2
+\mathcal F[\bs n^{m+1}]\\
&\leq  \dfrac{\rm Re}{2(1-\gamma)}
\|\bs v^{m}\|^2 +\dfrac{\tau^2 {\rm Re}}{8(1-\gamma)}
\|\nabla P^{m}\|^2
+\mathcal F[\bs n^{m}].
\end{aligned}
\end{equation}

\end{thm}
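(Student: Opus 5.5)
Part (i) follows the argument in the proof of Theorem~\ref{thm: main-scheme}. Take the dot product of \eqref{eq: projschemp2} with $\bs n^{m+\frac12}$. The right-hand side has the form $(\bs n^{m+\frac12}\times\bs a)\times\bs n^{m+\frac12}$, which is orthogonal to $\bs n^{m+\frac12}$. Since $(\bs n^{m+1}-\bs n^m)\cdot\bs n^{m+\frac12}=\frac12(|\bs n^{m+1}|^2-|\bs n^m|^2)$, we get $|\bs n^{m+1}|=|\bs n^m|$ pointwise. Induction from $|\bs n^0|=1$ then gives (i). This step does not depend on which velocity enters the director equation.

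For part (ii), test \eqref{eq: projschemp1} with $\frac{{\rm Re}\,\tau}{1-\gamma}\widetilde{\bs v}^{m+\frac12}$. This test function is the same velocity that appears in \eqref{eq: projschemp2} and inside $\widehat\sig^{\rm L}|^{\sim,m+\frac12}$, so every cross term matches.

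The individual terms are handled as follows.
\begin{enumerate}[label=(\alph*)]
\item The convective term $(\bar{\bs v}^{m+\frac12}\cdot\nabla\widetilde{\bs v}^{m+\frac12},\widetilde{\bs v}^{m+\frac12})$ vanishes after integration by parts, because $\nabla\cdot\bar{\bs v}^{m+\frac12}=0$ (it is a combination of $\bs v^m$ and $\bs v^{m-1}$) and the boundary conditions are homogeneous or periodic.
\item The viscous term gives $-\frac{\gamma\tau}{1-\gamma}\|\nabla\widetilde{\bs v}^{m+\frac12}\|^2$.
\item The time-difference term gives $\frac{\rm Re}{2(1-\gamma)}(\|\widetilde{\bs v}^{m+1}\|^2-\|\bs v^m\|^2)$.
\item The remaining terms are the Leslie and Ericksen couplings, and the pressure term $-\frac{{\rm Re}\,\tau}{1-\gamma}(\nabla P^m,\widetilde{\bs v}^{m+\frac12})$.
\end{enumerate}

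Next, test \eqref{eq: projschemp2} with $D_{\mathcal F}^O(\bs n)|^{m+\frac12}$ and use \eqref{eq: endiff} together with \eqref{id: crossid2}. This reproduces \eqref{eq: main-proof-n} with $\bs v,\om,\tt$ replaced by their tilde versions. Adding the two identities, the Ericksen term cancels exactly as before. The Leslie stress combines with the $\widetilde\om,\widetilde\tt$ term through the same pointwise algebra used in the proof of Theorem~\ref{thm: main-scheme}, which relies on $\alpha_3-\alpha_2=\gamma_1$ and $\alpha_2+\alpha_3=\gamma_2$. Under \eqref{eq: paracons} this produces the nonpositive dissipation terms. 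That algebra is purely pointwise and does not use incompressibility; it only needs $\widetilde\tt,\widetilde\om$ to be the symmetric and antisymmetric parts of $\nabla\widetilde{\bs v}^{m+\frac12}$.

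The main obstacle is converting $\|\widetilde{\bs v}^{m+1}\|^2$ and the pressure term into the modified energy. We have $(\nabla P^m,\widetilde{\bs v}^{m+\frac12})=\frac12(\nabla P^m,\widetilde{\bs v}^{m+1})$, because $(\nabla P^m,\bs v^m)=0$ by $\nabla\cdot\bs v^m=0$ and the boundary conditions. From \eqref{eq:sch2pre1}, write $\widetilde{\bs v}^{m+1}=\bs v^{m+1}+\frac{\tau}{2}\nabla(P^{m+1}-P^m)$. Then:
\begin{enumerate}[label=(\alph*)]
\item Orthogonality of $\bs v^{m+1}$ to gradients gives $\|\widetilde{\bs v}^{m+1}\|^2=\|\bs v^{m+1}\|^2+\frac{\tau^2}{4}\|\nabla(P^{m+1}-P^m)\|^2$.
\item Similarly, $(\nabla P^m,\widetilde{\bs v}^{m+1})=\frac{\tau}{2}(\nabla P^m,\nabla(P^{m+1}-P^m))$.
\item The polarization identity $2(\bs b,\bs a-\bs b)=|\bs a|^2-|\bs b|^2-|\bs a-\bs b|^2$ applied to these gives
\[
\frac{\rm Re}{2(1-\gamma)}\|\widetilde{\bs v}^{m+1}\|^2+\frac{{\rm Re}\,\tau}{2(1-\gamma)}(\nabla P^m,\widetilde{\bs v}^{m+1})
=\frac{\rm Re}{2(1-\gamma)}\|\bs v^{m+1}\|^2+\frac{\tau^2{\rm Re}}{8(1-\gamma)}\big(\|\nabla P^{m+1}\|^2-\|\nabla P^m\|^2\big).
\]
\end{enumerate}
In this identity the $\|\nabla(P^{m+1}-P^m)\|^2$ contributions cancel exactly.

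I will check this bookkeeping carefully, including the signs and the factor $\frac12$ from the midpoint. Once it holds, summing all contributions yields \eqref{eq: dissproj}, with right-hand side minus left-hand side equal to a sum of nonnegative dissipation terms.
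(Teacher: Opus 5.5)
Your proposal is correct and follows essentially the same route as the paper's proof. It uses the same dot product with $\bs n^{m+\frac12}$ for part (i), the same tests of the intermediate momentum equation with $\frac{{\rm Re}\,\tau}{1-\gamma}\widetilde{\bs v}^{m+\frac12}$ and of the director equation with $D_{\mathcal F}^O(\bs n)\big|^{m+\frac12}$, and the same absorption of $\|\widetilde{\bs v}^{m+1}\|^2$ and $(\nabla P^m,\widetilde{\bs v}^{m+1})$ into the modified energy via $\widetilde{\bs v}^{m+1}-\bs v^{m+1}=\frac{\tau}{2}\nabla(P^{m+1}-P^m)$ and the polarization identity; your combined identity in step (c) is exactly what the paper's pressure-correction identities add up to, and its signs and factors are right.
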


\begin{proof}
We start with the proof of the unit-length preserving property of the proposed scheme. 
Taking the dot product of \eqref{eq: projschemp2} with $\bs n^{m+\frac12}$, using the identity \eqref{id: crossid2} and difference of two squares identity, it follows that $|\bs n^{m+1}|^2=|\bs n^m|^2.$ By induction from $|\bs n^0|=1$, we conclude that
$
|\bs n^m|=1,\; m\ge 0.
$
This proves \eqref{eq: lengnprojection}.

Next, we proceed to the proof of the discrete energy dissipation of the proposed scheme. 
From the facts that $\nabla\cdot \bs v^m=\nabla\cdot \bs v^{m-1}=0$ and $\bar{\bs v}^{m+1/2}= \big(3\bs v^m-\bs v^{m-1}\big)/2$ and the boundary condition $\bs v^m|_{\partial\Omega}=\bs 0$ (resp. periodic boundary condition),  it follows the identities
\begin{equation}\label{eq: divid}
\big(\bar{\bs v}^{m+\frac12}\cdot\nabla \widetilde{\bs v}^{m+\frac12},
\widetilde{\bs v}^{m+\frac12}\big)=0,
\quad
(\nabla P^m,\bs v^m)=0.
\end{equation}
Let us take the $L^2$ inner product of \eqref{eq: projschemp1} with $\big({\tau {\rm Re}}/{(1-\gamma)} \big)\,\widetilde{\bs v}^{m+\frac12}$ and using the identities \eqref{eq: divid},
we obtain
\begin{equation}\label{eq: app-proof-v}
\begin{aligned}
&\frac{\rm Re}{2(1-\gamma)}
\Big(\|\widetilde{\bs v}^{m+1}\|^2-\|\bs v^m\|^2\Big)
=
-\frac{{\rm Re}\tau}{2(1-\gamma)}
(\nabla P^m,\widetilde{\bs v}^{m+1})
-\frac{\gamma\tau}{1-\gamma}\|\nabla \widetilde{\bs v}^{m+\frac12}\|^2
\\
&\qquad \quad+\tau\Big(\nabla\cdot \widehat{\sig}^{\rm L}\big|^{\sim,m+\frac12},
\widetilde{\bs v}^{m+\frac12}\Big)
+\tau\Big(
\nabla \bs n^{m+\frac12}\cdot
\Big[
\big(
\bs n^{m+\frac12}\times D_{\mathcal F}^O(\bs n)\big|^{m+\frac12}
\big)\times \bs n^{m+\frac12}
\Big],
\widetilde{\bs v}^{m+\frac12}
\Big).
\end{aligned}
\end{equation}

One can further reorganize \eqref{eq:sch2pre1} to obtain $\widetilde{\bs v}^{m+1}-\bs v^{m+1}
=
({\tau}/{2})\nabla(P^{m+1}-P^m),
$ and  hence
\begin{equation}\label{eq: app-proof-p2}
\|\widetilde{\bs v}^{m+1}-\bs v^{m+1}\|^2
=
\frac{\tau^2}{4}\|\nabla(P^{m+1}-P^m)\|^2.
\end{equation}
We also take the $L^2$ inner product of \eqref{eq:sch2pre1} with $2\tau\nabla P^m$ and employ $\nabla\cdot \bs v^{m+1}=0$ to obtain
\begin{equation*}
(\widetilde{\bs v}^{m+1},\nabla P^m)
=
\frac{\tau}{2}\big(\nabla(P^{m+1}-P^m),\nabla P^m\big).
\end{equation*}
Together with the identity $2(\bs a-\bs b,\bs b)=\|\bs a\|^2-\|\bs b\|^2-\|\bs a-\bs b\|^2$ with $\bs a=\nabla P^{m+1}$ and $\bs b=\nabla P^m$,
the above equation becomes 
\begin{equation}\label{eq: app-proof-p4}
(\widetilde{\bs v}^{m+1},\nabla P^m)
=
\frac{\tau}{4}
\Big(
\|\nabla P^{m+1}\|^2-\|\nabla P^m\|^2
-\|\nabla(P^{m+1}-P^m)\|^2
\Big).
\end{equation}
On the other hand, taking the $L^2$ inner product of \eqref{eq:sch2pre1} with $\tau \bs v^{m+1}$ and again using $\nabla\cdot \bs v^{m+1}=0$ lead to
\begin{equation}\label{eq: app-proof-p5}
\|\bs v^{m+1}\|^2-\|\widetilde{\bs v}^{m+1}\|^2
+\|\widetilde{\bs v}^{m+1}-\bs v^{m+1}\|^2=0.
\end{equation}
Combining \eqref{eq: app-proof-p2}, \eqref{eq: app-proof-p4}, and \eqref{eq: app-proof-p5}, we arrive at
\begin{equation}\label{eq: app-proof-p6}
\begin{aligned}
&\frac{\rm Re}{2(1-\gamma)}
\Big(\|\bs v^{m+1}\|^2-\|\widetilde{\bs v}^{m+1}\|^2\Big)
-\frac{{\rm Re}\tau}{2(1-\gamma)}
(\nabla P^m,\widetilde{\bs v}^{m+1})
\\
&=
-\frac{{\rm Re}\tau^2}{8(1-\gamma)}
\Big(\|\nabla P^{m+1}\|^2-\|\nabla P^m\|^2\Big).
\end{aligned}
\end{equation}

Next, we take the $L^2$ inner product of \eqref{eq: projschemp2} with
$D_{\mathcal F}^O(\bs n)\big|^{m+\frac12}$. Now add the resulting
equation to \eqref{eq: app-proof-v}, \eqref{eq: app-proof-p6} , and noting that Ericksen-stress contribution and the asymmetric coupling part of the Leslie-stress contribution cancel with the corresponding coupling terms while the remaining terms follow the
same dissipation structure as in Theorem~\ref{thm: main-scheme}, we obtain
\begin{equation*}
\begin{aligned}
&\frac{\rm Re}{2(1-\gamma)}
\Big(\|\bs v^{m+1}\|^2-\|\bs v^m\|^2\Big)
+\mathcal F[\bs n^{m+1}]-\mathcal F[\bs n^m]
+\frac{{\rm Re}\tau^2}{8(1-\gamma)}
\Big(\|\nabla P^{m+1}\|^2-\|\nabla P^m\|^2\Big)
\\
&=
-\frac{\gamma\tau}{1-\gamma}\|\nabla \widetilde{\bs v}^{m+\frac12}\|^2
-\frac{\tau}{\gamma_1}
\Big\|
\bs n^{m+\frac12}\times D_{\mathcal F}^O(\bs n)\big|^{m+\frac12}
\Big\|^2-\Big(\alpha_5+\alpha_6-\frac{\gamma_2^2}{\gamma_1}\Big)\tau
\|\widetilde{\tt}^{m+\frac12}\cdot \bs n^{m+\frac12}\|^2
\\
&\quad
-\Big(\alpha_1+\frac{\gamma_2^2}{\gamma_1}\Big)\tau
\Big\|
\widetilde{\tt}^{m+\frac12}:\bs n^{m+\frac12}\otimes \bs n^{m+\frac12}
\Big\|^2
-\alpha_4\tau\|\widetilde{\tt}^{m+\frac12}\|^2.
\end{aligned}
\end{equation*}
Together with the constraints on coefficients in \eqref{eq: paracons} and the fact that $\gamma\in(0,1)$, it directly leads to the discrete energy dissipation law \eqref{eq: dissproj}. This ends the proof.
\end{proof}

\section{Explicit basis functions for the discrete divergence-free space}\label{app: space}
\renewcommand{\theequation}{B.\arabic{equation}}
In this appendix, we collect explicit basis functions for the exact divergence-free discrete space $\mathbb V_N\subset \bs H_0^1(\mathrm{div}0;\Omega)$ used in Section~\ref{sect: 4}.  We introduce three one-dimensional polynomials families $\{\psi_n\}$ and $\{\varphi_n\}$ and $\{ \zeta_n\}$ on $\Lambda=(-1,1)$
\begin{equation*}\label{eq: app-psi}
\begin{aligned}
& \psi_n(x)=
\frac{\sqrt{2(2n-1)}}{n-1}\,J_n^{(-1,-1)}(x),
\;\; n\ge 2, \quad \varphi_n(x)=
\frac{\sqrt{8(2n-3)}}{(n-3)(n-2)}\,J_n^{(-2,-2)}(x),
\;\; n\ge 4,\\
& \zeta_n(x)=
\frac{1}{\sqrt{4n+6}}\big(L_n(x)-L_{n+2}(x)\big),
\qquad n\ge 0,
\end{aligned}
\end{equation*}
where $J_n^{(-1,-1)}$ and $J_n^{(-2,-2)}$ are given by
\begin{equation*}\label{eq: app-1d-polys}
J_n^{(-1,-1)}(x)=
\frac{x^2-1}{4}\,J_{n-2}^{(1,1)}(x),
\;\; n\ge 2,\qquad J_n^{(-2,-2)}(x)=
\Big(\frac{x^2-1}{4}\Big)^2 J_{n-4}^{(1,1)}(x),
\;\; n\ge 4,
\end{equation*}
with $J_n^{(1,1)}(x)$ represents the classical Jacobi polynomial with hyper parameters $(1,1)$ (cf.~\cite{ShenTangWang2011}) and denote $L_n(x)$ for the Legendre polynomial of degree $n$.  They serve as the building blocks for the construction of exact divergence-free basis. 
Define the index set $\mathbb{Z}_N:=\big\{ 1,\dots,N-3  \big\}$, we now describe explicit bases for $\mathbb V_N$ on the reference domains $\Lambda^2$ and $\Lambda^3$.

In the two-dimensional setting, the incompressibility constraint acts on the first two components for the velocity field $\bs v(x_1,x_2)=\big(v_1(x_1,x_2),v_2(x_1,x_2),v_3(x_1,x_2) \big)^{\intercal}$. Since some two-dimensional numerical simulations employ vector fields with three components depending only on $(x_1,x_2)$, the third component is treated separately. Define
$
\mathbb V_{N,0}(\Lambda^2)
=
\mathrm{span}\big\{
\bs\Phi_{mn}^{(1)}, \bs\Phi_{mn}^{(2)}
\big\},
$
where
\begin{equation*}\label{eq: app-v2d-1}
\begin{aligned}
& \bs\Phi_{mn}^{(1)}(x_1,x_2)
=( \varphi_{m+3}(x_1)\psi_{n+2}(x_2),  -\psi_{m+2}(x_1)\varphi_{n+3}(x_2),0  )^{\intercal},\;
m,n=1,\dots,N-3, \\
& \bs\Phi_{lk}^{(2)}(x_1,x_2)=\big(0,0, \zeta_l(x_1)\zeta_k(x_2) \big)^{\intercal},\qquad
l,k=0,\dots,N-2.
\end{aligned}
\end{equation*}
The first family is exactly divergence-free in the $(x_1,x_2)$-plane, while the second family describes the uncoupled third component in the two-dimensional setting.

For the three-dimensional exact divergence-free space, we define
\[
\mathbb V_{N,0}(\Lambda^3)
=
\mathrm{span}\Big\{
\bs\Phi_{mnl}^{(1)},
\bs\Phi_{mnl}^{(2)},
\bs\Phi_{nl}^{(3)},
\bs\Phi_{ml}^{(4)},
\bs\Phi_{mn}^{(5)}
\Big\},
\]
where the exact divergence-free spectral basis are given by
\begin{equation*}\label{eq: app-v3d-1}
\begin{aligned}
& \bs\Phi_{mnl}^{(1)}=\big(\varphi_{m+3}(x_1)\psi_{n+2}(x_2)\psi_{l+2}(x_3),  -\psi_{m+2}(x_1)\varphi_{n+3}(x_2)\psi_{l+2}(x_3),0  \big)^{\intercal},\;\;m,n,l\in \mathbb{Z}_N, \\
& \bs\Phi_{mnl}^{(2)}=\big(\varphi_{m+3}(x_1)\psi_{n+2}(x_2)\psi_{l+2}(x_3),0,-\psi_{m+2}(x_1)\psi_{n+2}(x_2)\varphi_{l+3}(x_3)  \big)^{\intercal},\;\; m,n,l\in \mathbb{Z}_N,\\
& \bs\Phi_{nl}^{(3)}=\big(0, \psi_2(x_1)\varphi_{n+3}(x_2)\psi_{l+2}(x_3),-\psi_2(x_1)\psi_{n+2}(x_2)\varphi_{l+3}(x_3)   \big)^{\intercal},\;\; n,l\in \mathbb{Z}_N,\\
&\bs\Phi_{ml}^{(4)}=\big( \varphi_{m+3}(x_1)\psi_2(x_2)\psi_{l+2}(x_3),0, -\psi_{m+2}(x_1)\psi_2(x_2)\varphi_{l+3}(x_3)  \big)^{\intercal},\;\; m,l\in \mathbb{Z}_N,\\
& \bs\Phi_{mn}^{(5)}=\big(\varphi_{m+3}(x_1)\psi_{n+2}(x_2)\psi_2(x_3),-\psi_{m+2}(x_1)\varphi_{n+3}(x_2)\psi_2(x_3), 0   \big)^{\intercal},\;\; m,n\in \mathbb{Z}_N.
\end{aligned}
\end{equation*}
Each of the above families belongs to $\bs H_0^1(\mathrm{div}0;\Lambda^3)$, and their span gives the exact divergence-free spectral space used for the velocity approximation. The corresponding basis functions on the physical domain $\Omega$ can be obtained from the reference domain basis functions through the affine mapping and the associated contravariant Piola transformation~\cite[page 39]{ciarlet1988}.

\section*{Acknowledgement}
H. Wang and Z. Yang acknowledge the support from the National Key R\&D Program of China (No. 2024YFA1016100), the National Natural Science Foundation of China (Nos. 12471343, 12531015, 12101399),
and the Key Laboratory of Scientific and Engineering Computing (Ministry of Education).  J. Xu acknowledges the support from the National Key R\&D Program of China, the National Natural Science Foundation of China (No. 12371414, 12288201), the Strategic Priority Research Program of the Chinese Academy of Sciences (No. XDB0510201), Beijing Natural Science Foundation (No. JQ25002).

\bibliography{refpapers}

@book{ShenTangWang2011,
  title = {{Spectral Methods: Algorithms, Analysis and Applications}},
  author = {Shen, J. and Tang, T. and Wang, L. L.},
  SERIES = {Springer Series in  Computational Mathematics},
  VOLUME = {41},
  year = {2011},
  publisher = {Springer, Heidelberg}
}

@book{kelley1995iterative,
  title={Iterative Methods for Linear and Nonlinear Equations},
  author={Kelley, C. T.},
  ADDRESS={Philadelphia},
  year={1995},
  publisher={SIAM}
}

@article{qin2023exact,
  title={An exact divergence-free spectral method for incompressible and resistive magneto-hydrodynamic equations in two and three dimensions},
  author={Qin, L. and Li, H. and Yang, Z.},
  journal={arXiv:2312.12218},
  year={2023}
}

@article{cao2025length,
  title={Length preserving numerical schemes for the nematic liquid crystal flows},
  author={Cao, R. and Yi, N.},
  journal={ESAIM Math. Model. Numer. Anal.},
  fjournal={ESAIM: Mathematical Modelling and Numerical Analysis},
  volume={59},
  number={6},
  pages={3021--3040},
  year={2025},
  publisher={EDP Sciences}
}

@article{Badia2011a,
  title={Finite element approximation of nematic liquid crystal flows using a saddle-point structure},
  author={Badia, S. and Guill{\'e}n-Gonz{\'a}lez, F. and Guti{\'e}rrez-Santacreu, J. V.},
  journal={J. Comput. Phys.},
  fjournal={Journal of Computational Physics},
  volume={230},
  number={4},
  pages={1686--1706},
  year={2011},
  publisher={Elsevier}
}

@article{wang2023error,
  title={Error estimates of a sphere-constraint-preserving numerical scheme for {Ericksen-Leslie} system with variable density.},
  author={Wang, D. and Liu, F. and Jia, H. and Zhang, J.},
  journal={Discrete Contin. Dyn. Syst. Ser. B},
  fjournal={Discrete and Continuous Dynamical Systems-Series B},
  volume={28},
  number={11},
  pages={5814--5838},
  year={2023}
}

@article{bao2021constraint,
  title={Constraint-preserving energy-stable scheme for the {2D} simplified {Ericksen-Leslie} system},
  author={Bao, X. and Chen, R. and Zhang, H.},
  journal={J. Comput. Math.},
  fjournal={Journal of Computational Mathematics},
  volume={39},
  number={1},
  pages={1--21},
  year={2021}
}

@article{gui2022convergence,
  title={Convergence of renormalized finite element methods for heat flow of harmonic maps},
  author={Gui, X. and Li, B. and Wang, J.},
  journal={SIAM J. Numer. Anal.},
  fjournal={SIAM Journal on Numerical Analysis},
  volume={60},
  number={1},
  pages={312--338},
  year={2022},
  publisher={SIAM}
}

@article{bai2025convergence,
  title={Convergence of multistep projection methods for harmonic map heat flows into general surfaces},
  author={Bai, G. and Gui, X. and Li, B.},
  journal={Numer. Math.},
  fjournal={Numerische Mathematik},
  volume={157},
  number={2},
  pages={629--661},
  year={2025},
  publisher={Springer}
}

@article{lin1995nonparabolic,
  title={Nonparabolic dissipative systems modeling the flow of liquid crystals},
  author={Lin, F. and Liu, C.},
  journal={Comm. Pure Appl. Math.},
  fjournal={Communications on Pure and Applied Mathematics},
  volume={48},
  number={5},
  pages={501--537},
  year={1995},
  publisher={Wiley Online Library}
}

@article{zou2023extrapolated,
  title={An extrapolated {Crank-Nicolson} virtual element scheme for the nematic liquid crystal flows},
  author={Zou, G. and Wang, X. and Li, J.},
  journal={Adv. Comput. Math.},
  fjournal={Advances in Computational Mathematics},
  volume={49},
  number={3},
  pages={30},
  year={2023},
  publisher={Springer}
}

@article{zheng2024novel,
  title={A novel discontinuous {Galerkin} projection scheme for the hydrodynamics of nematic liquid crystals},
  author={Zheng, Z. and Zou, G. and Wang, B.},
  journal={Commun. Nonlinear Sci. Numer. Simul.},
  fjournal={Communications in Nonlinear Science and Numerical Simulation},
  volume={137},
  pages={108163},
  year={2024},
  publisher={Elsevier}
}

@article{Zhao_Yang_Li_and_Wang-2016,
  title={Energy stable numerical schemes for a hydrodynamic model of nematic liquid crystals},
  author={Zhao, J. and Yang, X. and Li, J. and Wang, Q.},
  journal={SIAM J. Sci. Comput.},
  fjournal={SIAM Journal on Scientific Computing},
  volume={38},
  number={5},
  pages={A3264--A3290},
  year={2016},
  publisher={SIAM}
}

@article{xu2024second,
  title={A Second-Order Length-Preserving and Unconditionally Energy Stable Rotational Discrete Gradient Method for {Oseen-Frank} Gradient Flows},
  author={Xu, J. and Yang, X. and Yang, Z.},
  journal={Commun. Comput. Phys.},
  fjournal={Communications in Computational Physics},
  volume={35},
  number={2},
  pages={369--394},
  year={2024}
}

@article{walker2020finite,
  title={A finite element method for the generalized {Ericksen} model of nematic liquid crystals},
  author={Walker, S. W.},
  journal={ESAIM Math. Model. Numer. Anal.},
  fjournal={ESAIM: Mathematical Modelling and Numerical Analysis},
  volume={54},
  number={4},
  pages={1181--1220},
  year={2020},
  publisher={EDP Sciences}
}

@article{nochetto2018ericksen,
  title={The {Ericksen} model of liquid crystals with colloidal and electric effects},
  author={Nochetto, R. H. and Walker, S. W. and Zhang, W.},
  journal={J. Comput. Phys.},
  fjournal={Journal of Computational Physics},
  volume={352},
  pages={568--601},
  year={2018},
  publisher={Elsevier}
}

@article{liu2000approximation,
  title={Approximation of liquid crystal flows},
  author={Liu, C. and Walkington, N. J.},
  journal={SIAM J. Numer. Anal.},
  fjournal={SIAM Journal on Numerical Analysis},
  volume={37},
  number={3},
  pages={725--741},
  year={2000},
  publisher={SIAM}
}

@article{liu2001liquid,
  title={On liquid crystal flows with free-slip boundary conditions},
  author={Liu, C. and Shen, J.},
  journal={Discrete Contin. Dyn. Syst.},
  fjournal={Discrete and Continuous Dynamical Systems},
  volume={7},
  number={2},
  pages={307--318},
  year={2001},
  publisher={Southwest Missouri State University}
}

@article{lin2007energy,
  title={An energy law preserving {$C^0$} finite element scheme for simulating the kinematic effects in liquid crystal dynamics},
  author={Lin, P. and Liu, C. and Zhang, H.},
  journal={J. Comput. Phys.},
  fjournal={Journal of Computational Physics},
  volume={227},
  number={2},
  pages={1411--1427},
  year={2007},
  publisher={Elsevier}
}

@article{du2025semi,
  title={Semi-implicit Projection Schemes for Manifold Constraint Gradient Flows},
  author={Du, Q. and Liu, S. and Yang, J.},
  journal={J. Sci. Comput.},
  fjournal={Journal of Scientific Computing},
  volume={103},
  number={3},
  pages={92},
  year={2025},
  publisher={Springer}
}

@article{du2001fourier,
  title={Fourier spectral approximation to a dissipative system modeling the flow of liquid crystals},
  author={Du, Q. and Guo, B. and Shen, J.},
  journal={SIAM J. Numer. Anal.},
  fjournal={SIAM Journal on Numerical Analysis},
  volume={39},
  number={3},
  pages={735--762},
  year={2001},
  publisher={SIAM}
}

@article{cheng2023energy,
  title={An energy stable finite difference scheme for the {Ericksen-Leslie} system with penalty function and its optimal rate convergence analysis},
  author={Cheng, K. and Wang, C. and S. M. Wise},
  journal={Commun. Math. Sci.},
  fjournal={Communications in Mathematical Sciences},
  volume={21},
  number={4},
  pages={1135--1169},
  year={2023}
}

@article{chen2016kinematic,
  title={The kinematic effects of the defects in liquid crystal dynamics},
  author={Chen, R. and Bao, W. and Zhang, H.},
  journal={Commun. Comput. Phys.},
  fjournal={Communications in Computational Physics},
  volume={20},
  number={1},
  pages={234--249},
  year={2016},
  publisher={Cambridge University Press}
}

@article{cabrales2015time,
  title={A Time-Splitting Finite-Element Stable Approximation for the {Ericksen--Leslie} Equations},
  author={Cabrales, R. C. and Guill{\'e}n-Gonz{\'a}lez, F. and Guti{\'e}rrez-Santacreu, J. V.},
  journal={SIAM J. Sci. Comput.},
  fjournal={SIAM Journal on Scientific Computing},
  volume={37},
  number={2},
  pages={B261--B282},
  year={2015},
  publisher={SIAM}
}

@article{becker2008finite,
  title={Finite element approximations of the {Ericksen--Leslie} model for nematic liquid crystal flow},
  author={Becker, R. and Feng, X. and Prohl, A.},
  journal={SIAM J. Numer. Anal.},
  fjournal={SIAM Journal on Numerical Analysis},
  volume={46},
  number={4},
  pages={1704--1731},
  year={2008},
  publisher={SIAM}
}

@article{alouges1997new,
  title={A new algorithm for computing liquid crystal stable configurations: the harmonic mapping case},
  author={Alouges, F.},
  journal={SIAM J. Numer. Anal.},
  fjournal={SIAM Journal on Numerical Analysis},
  volume={34},
  number={5},
  pages={1708--1726},
  year={1997},
  publisher={SIAM}
}

@article{toth2002hydrodynamics,
  title={Hydrodynamics of topological defects in nematic liquid crystals},
  author={T{\'o}th, G. and Denniston, C. and Yeomans, J. M.},
  journal={Phys. Rev. Lett.},
  fjournal={Physical Review Letters},
  volume={88},
  number={10},
  pages={105504},
  year={2002},
  publisher={APS}
}

@article{sircar2009dynamics,
  title={Dynamics and rheology of biaxial liquid crystal polymers in shear flows},
  author={Sircar, S. and Wang, Q.},
  journal={J. Rheol.},
  fjournal={Journal of Rheology},
  volume={53},
  number={4},
  pages={819--858},
  year={2009},
  publisher={AIP Publishing}
}

@article{forest2004weak,
  title={The weak shear kinetic phase diagram for nematic polymers},
  author={Forest, M. G. and Wang, Q. and Zhou, R.},
  journal={Rheol. Acta},
  fjournal={Rheologica Acta},
  volume={43},
  number={1},
  pages={17--37},
  year={2004},
  publisher={Springer}
}

@article{forest2004flow,
  title={The flow-phase diagram of {Doi-Hess} theory for sheared nematic polymers II: finite shear rates},
  author={Forest, M. G. and Wang, Q. and Zhou, R.},
  journal={Rheol. Acta},
  fjournal={Rheologica Acta},
  volume={44},
  number={1},
  pages={80--93},
  year={2004},
  publisher={Springer}
}

@article{li2023frame,
  title={Frame hydrodynamics of biaxial nematics from molecular-theory-based tensor models},
  author={Li, S. and Xu, J.},
  journal={SIAM J. Appl. Math.},
  fjournal={SIAM Journal on Applied Mathematics},
  volume={83},
  number={4},
  pages={1467--1495},
  year={2023},
  publisher={SIAM}
}

@article{Han2015from,
title={From microscopic theory to macroscopic theory: a systematic study on modeling for liquid crystals},
author={Han, J. and Luo, Y. and Wang, W. and Zhang, P. and Zhang, Z.},
journal={Arch. Ration. Mech. Anal.},
fjournal={Archive for Rational Mechanics and Analysis},
volume = "215",
number = "3",
pages = "741--809",
year = "2015",
}

@BOOK{deGennesProst1993,
title = "{The Physics of Liquid Crystals}",
publisher="{Oxford Univ. Press}",
ADDRESS="Oxford",
year = "1993",
author = "P. G. de Gennes and J. Prost"
}

@article{Frank1958,
title = "I. liquid crystals. {On} the theory of liquid crystals",
journal = "Discuss. Faraday Soc.",
fjournal = "Discussions of the Faraday Society",
volume = "25",
number = " ",
pages = "19-28",
year = "1958",
author = "F. C. Frank"
}

@incollection{leslie1979theory,
  title={Theory of flow phenomena in liquid crystals},
  author={Leslie, F. M.},
  booktitle={Advances in liquid crystals},
  Volume={4},
  pages={1--81},
  year={1979},
  publisher={Academic Press},
  address={New York}
}

@article{ericksen1962hydrostatic,
  title={Hydrostatic theory of liquid crystals},
  author={Ericksen, J. L.},
  journal={Arch. Ration. Mech. Anal.},
  fjournal={Archive for Rational Mechanics and Analysis},
  volume={9},
  number={1},
  pages={371--378},
  year={1962},
  publisher={Springer}
}

@article{walton2025orienting,
  title={Orienting field effects on the flow of an active nematic liquid crystal in a channel},
  author={Walton, J. and McKay, G. and Mottram, N. J.},
  journal={Eur. Phys. J. E},
  fjournal={The European Physical Journal E},
  volume={48},
  number={10},
  pages={67},
  year={2025},
  publisher={Springer}
}

@article{Sengupta2013FlowShaping,
  author  = {Sengupta, A. and Tkalec, U. and Ravnik, M. and Yeomans, J. M. and Bahr, C. and Herminghaus, S.},
  title   = {Liquid Crystal Microfluidics for Tunable Flow Shaping},
  journal = {Phys. Rev. Lett.}, 
  fjournal = {Physical Review Letters},
  year    = {2013},
  volume  = {110},
  number  = {4},
  pages   = {048303},
  doi     = {10.1103/PhysRevLett.110.048303}
}

@article{sengupta2012opto,
  title={Opto-fluidic velocimetry using liquid crystal microfluidics},
  author={Sengupta, A. and Herminghaus, S. and Bahr, C.},
  journal={Appl. Phys. Lett.},
  fjournal={Applied Physics Letters},
  volume={101},
  number={16},
  pages = {164101},
  year={2012},
  publisher={AIP Publishing}
}

@article{Peng2011LCSpatialLightModulator,
  author  = {Peng, Z. and Liu, Y. and Yao, L. and Cao, Z. and Mu, Q. and Hu, L. and Xuan, L.},
  title   = {Improvement of the switching frequency of a liquid-crystal spatial light modulator with optimal cell gap},
  journal={Opt. Lett.},
  fjournal = {Optics Letters},
  year    = {2011},
  volume  = {36},
  number  = {18},
  pages   = {3608--3610},
  doi     = {10.1364/OL.36.003608}
}

@book{ciarlet1988,
  title={Mathematical elasticity: Three-dimensional elasticity},
  author={Ciarlet, P. G.},
  year={1988},
  publisher={Elsevier},
  address={New York}
}

@article{bouck2024projection,
  title={Projection-Free Method for the Full {Frank-Oseen} Model of Liquid Crystals},
  author={Bouck, L. and Nochetto, R. H.},
  journal={arXiv:2405.03145},
  year={2024}
}

\end{document}